\documentclass[12pt]{amsart}

\usepackage[matrix,arrow,curve,frame]{xy}
\usepackage{latexsym}
\usepackage{amscd}
\usepackage{euscript}
\usepackage{tikz}
\usepackage{tikz-cd,mathtools}
\usepackage{url,pdflscape,amssymb,rotating,caption,subcaption}
\usepackage[linktocpage]{hyperref}

\usepackage{fullpage}

\usepackage{graphicx,amsmath,amsthm,amsfonts,amscd,mathrsfs} 
\theoremstyle{plain}
\newtheorem{theorem}{Theorem}[section]
\newtheorem{lemma}[theorem]{Lemma}
\newtheorem{proposition}[theorem]{Proposition}

\theoremstyle{definition}
\newtheorem{definition}[theorem]{Definition}

\theoremstyle{remark}
\newtheorem{example}{Example}[section]
\newtheorem{remark}{Remark}[section]

\newcommand\inner[2]{\left\langle #1 \left| #2 \right\rangle \right. }

\makeatletter
\newcommand*{\splitarrow}[2]{\mathrel{
  \settowidth{\@tempdima}{$\scriptstyle#1$}
  \settowidth{\@tempdimb}{$\scriptstyle#2$}
  \ifdim\@tempdimb>\@tempdima \@tempdima=\@tempdimb\fi
  \mathop{\vcenter{
    \offinterlineskip\ialign{\hbox to\dimexpr\@tempdima+1em{##}\cr
    \rightarrowfill\cr\noalign{\kern.5ex}
    \leftarrowfill\cr}}}\limits^{\!#1}_{\!#2}}}

\DeclareMathOperator{\Aut}{Aut}

\DeclareMathOperator{\Fred}{Fred}

\DeclareMathOperator{\coker}{coker}

\DeclareMathOperator{\Out}{Out}

\DeclareMathOperator{\Ext}{Ext}
\DeclareMathOperator{\Hom}{Hom}
\DeclareMathOperator{\Tor}{Tor}

\newcommand{\cH}{{\mathcal H}}

\newcommand{\cA}{{\mathcal A}}
\newcommand{\cE}{{\mathcal E}}

\newcommand{\cK}{{\mathcal K}}

\newcommand{\cZ}{{\mathcal Z}}
\newcommand{\cO}{{\mathcal O}}

\newcommand{\CC}{{\mathbb C}}
\newcommand{\RR}{{\mathbb R}}
\newcommand{\ZZ}{{\mathbb Z}}

\newcommand{\NN}{{\mathbb N}}

\newcommand{\OK}{\cO_{\infty} \otimes \cK}
\newcommand*\circles{|[draw,circle]|}

\begin{document}

\title{Computations in higher twisted $K$-theory}

\author{David Brook}
\address{School of Mathematical Sciences,
University of Adelaide, Adelaide 5005, Australia}
\email{david.brook@adelaide.edu.au}

\subjclass[2010]{Primary 19L50, Secondary 46L80, 55T25}
\keywords{Higher twisted K-theory, K-theory, Higher geometric twists, Spectral sequences}
\date{}
\thanks{The author was partially supported by a University of Adelaide Masters Scholarship}

\begin{abstract}
Higher twisted $K$-theory is an extension of twisted $K$-theory introduced by Ulrich Pennig which captures all of the homotopy-theoretic twists of topological $K$-theory in a geometric way. We give an overview of his formulation and key results, and reformulate the definition from a topological perspective. We then investigate ways of producing explicit geometric representatives of the higher twists of $K$-theory viewed as cohomology classes in special cases using the clutching construction and when the class is decomposable. Atiyah--Hirzebruch and Serre spectral sequences are developed and information on their differentials is obtained, and these along with a Mayer--Vietoris sequence in higher twisted $K$-theory are applied in order to perform computations for a variety of spaces.
\end{abstract}

\maketitle
\tableofcontents

\section{Introduction}

Higher twisted $K$-theory is the full twisted cohomology theory of topological $K$-theory, for which an operator algebraic model was first developed by Ulrich Pennig \cite{Pennig}. While twisted $K$-theory has been around for many years, it was understood that the classical twists corresponding to third-degree integral cohomology classes, or equivalently principal $PU$-bundles or bundle gerbes, did not correspond to the full set of abstract twists for $K$-theory; these twists merely had an elegant geometric interpretation. More general twists were known to exist by homotopy-theoretic arguments, but until recently these twists were entirely abstract, rendering them unusable in computations and making it difficult to find applications of these more general twists. For this reason, the limited set of twists for which geometric representatives were known have been studied extensively. In this paper, we focus on developing tools for computations involving this general class of twists, and apply these to compute the higher twisted $K$-theory groups of a wide variety of spaces.

\subsection*{Background}

What has classically been called twisted $K$-theory is a generalisation of topological $K$-theory which gradually emerged over the course of the 1960s after Atiyah and Hirzebruch's initial work in topological $K$-theory \cite{AHstart}. Interest in the area was sparked when Atiyah, Bott and Shapiro investigated the relationship between topological $K$-theory and Clifford algebras \cite{ABS}, providing a new perspective from which $K$-theory could be viewed using Clifford bundles associated to vector bundles which was studied in Karoubi's doctoral thesis \cite{KaroubiPhD}. Karoubi and Donovan extended this by replacing Clifford bundles with algebra bundles \cite{DonKar}, which resulted in what was then called ``$K$-theory with local coefficients'' in which the local coefficient systems over $X$ were classified by $H^1(X, \ZZ_2)$ and the torsion elements of $H^3(X,\ZZ)$, corresponding to finite-dimensional complex matrix algebra bundles over $X$. This was the first notion of twisted $K$-theory, and it was defined geometrically using algebra bundles to represent twists. Note that twists given by elements of $H^1(X,\ZZ_2)$ were studied comprehensively in \cite{Variant} and are often neglected in recent work.

Over the coming years, much work was done on developing twisted $K$-theory. Rosenberg extended the definition of twisted $K$-theory to any class in $H^3(X,\ZZ)$ rather than specifically considering torsion classes \cite{RosenbergTwist}, corresponding to using infinite-dimensional algebra bundles over $X$. Work by Bouwknegt, Carey, Mathai, Murray and Stevenson provided twisted $K$-theory computations and developed a twisted Chern character in the even case \cite{BCMMS}, and Mathai and Stevenson extended this to the odd case \cite{MS} and studied the Connes--Chern character for twisted $K$-theory \cite{MS2}. Further developments were later made by Atiyah and Segal in formulating twisted $K$-theory using bundles of Fredholm operators and exploring the differentials in an Atiyah--Hirzebruch spectral sequence for twisted $K$-theory \cite{AS1, AS2}.

Up until this point, twists were geometric objects which could naturally be used to modify $K$-theory. From a homotopy-theoretic point of view, however, these twists did not capture the entire picture -- there existed more general abstract twists for which no geometric interpretation was known. In fact, Atiyah and Segal acknowledged the existence of more general twists of $K$-theory which had not found a geometric realisation \cite{AS1}.

This changed with the introduction of a Dixmier--Douady theory for strongly self-absorbing $C^*$-algebras by Pennig and Dadarlat \cite{DP2,DP1}, analogous to the Dixmier--Douady theory for the compact operators on a Hilbert space which is integral to the construction of the classical twists. Pennig was able to use these algebras to propose a geometric model which captures all of the twists of $K$-theory, in which abstract twists can be realised as algebra bundles with fibres isomorphic to the stabilised infinite Cuntz algebra $\OK$ \cite{Pennig}.

\subsection*{Higher twists}
We provide a brief introduction to the homotopy-theoretic arguments showing that more general twists of $K$-theory exist. Recall that there is a notion of spectrum for a cohomology theory $h^{\bullet}$, which is a sequence of topological spaces $\{E_n\}_{n \in \NN}$ satisfying particular properties such that $h^n(X) = [X, E_n]$. In the case of topological $K$-theory, this is the 2-periodic sequence $\ZZ \times BU, U, \cdots$. To each ring spectrum there is an associated unit spectrum consisting of the units in each space, which we will denote $\{GL_1(E_n)\}_{n \in \NN}$, and from this another cohomology theory denoted $gl_1(h)^{\bullet}$ may be defined via $gl_1(h)^n(X) = [X, GL_1(E_n)]$. The notation $gl_1(h)$ here reflects that this cohomology theory is in some sense constructed out of the unital elements of the cohomology theory $h^{\bullet}$. Then the twists of $h^{\bullet}$ over some space $X$ are classified by the first group of this cohomology theory, i.e.\ $gl_1(h)^1(X)$. As $GL_1(E_0)$ denotes the units in $E_0$, we see that the twists are classified by $[X, BGL_1(E_0)]$. More technically, a twisting of a cohomology theory over a space $X$ is a bundle of spectra over $X$ with fibre given by the spectrum $R$ of the cohomology theory. Then letting $GL_1(R)$ denote the automorphism group of the spectrum $R$, these bundles of spectra are classified by $[X, BGL_1(R)]$. One may then define the groups of the twisted cohomology theory.

Applying these notions to topological $K$-theory, we see that the invertible elements of the ring $K^0(X)$ are represented by virtual line bundles. Unifying this with the spectrum picture, these classes correspond to $[X, \ZZ_2 \times BU]$ and so in the notation of the previous paragraph we have $GL_1(\ZZ \times BU) = \ZZ_2 \times BU$. Thus the twists of topological $K$-theory over $X$ are classified by \[gl_1(KU)^1(X) = [X, B(\ZZ_2 \times BU)].\] It is known that $BU$ is homotopy equivalent to $K(\ZZ,2) \times BSU$ \cite{MST} and so we see that $B(\ZZ_2 \times BU) \simeq K(\ZZ_2, 1) \times K(\ZZ,3) \times BBSU$. This means that twists of $K$-theory are classified by homotopy classes of maps
\[ X \to K(\ZZ_2,1) \times K(\ZZ,3) \times BBSU, \]
and therefore for a CW complex $X$ the twists of $K$-theory correspond to elements of $H^1(X,\ZZ_2)$, $H^3(X,\ZZ)$ and $[X,BBSU]$. The third of these groups is not well-understood, which led to the lack of understanding of this class of twists for $K$-theory. Furthermore, while this approach may be used to define what a twist of $K$-theory is, it does not provide any geometric information about twists, rendering any results specific to twisted $K$-theory very difficult to prove. Thus Pennig's geometric interpretation of the full set of twists of $K$-theory lays the foundations for significant developments in the field. We remark that while this will not be a focus of this paper, further investigation into the topology of $BBSU$ may prove useful in the study of higher twisted $K$-theory, as well as being a topic of interest in its own right.

\subsection*{Links with physics}
Further to this abstract motivation, higher twisted $K$-theory is expected to be of great relevance in mathematical physics. Both topological and classical twisted $K$-theory have found application to physics, particularly in string theory, with work of Witten \cite{Witten} providing a link between charges of $D$-branes and topological $K$-theory and later work by Bouwknegt and Mathai extending this to twisted $K$-theory in the presence of a $B$-field \cite{BM}, so it is likely that similar ties exist to higher twisted $K$-theory. In fact, some applications have already been found. In the classical case, Bouwknegt, Evslin and Mathai show that the T-duality transformation induces an isomorphism on twisted $K$-theory, which was an unexpected result as the T-duality transformation often leads to significant differences between the topologies of the circle bundles in question \cite{BEM,BEMT}. They then generalise this to the case of spherical T-duality \cite{BEM1,BEM2,BEM3} in which the relevant cohomology class is of degree 7, which led to the result that the spherical T-duality transform induces an isomorphism on higher twisted $K$-theory. In the first of this series of papers, the authors also provide some insight into how higher twisted $K$-theory fits in with the $D$-brane picture. In the setting of Type IIB string theory, the data in 10-dimensional supergravity includes a 10-manifold $Y$ which is commonly diffeomorphic to $\RR \times X$ for an appropriate 9-manifold $X$. They explain that the $K$-theory of $X$ twisted by a 7-class corresponds to the set of conserved charges of a certain subset of branes. While this is an interesting result, it only applies to 7-classes in this specialised setting which implies that there may be a richer relationship between $D$-branes and higher twisted $K$-theory than what currently exists in the literature. It may be possible to gain greater insight into the behaviour of $D$-branes by studying higher twisted $K$-theory.

Further work in studying higher twisted $K$-theory in relation to spherical T-duality is done by MacDonald, Mathai and Saratchandran \cite{MMS}, who also develop a Chern character in higher twisted $K$-theory and give an isomorphism between higher twisted $K$-theory and higher twisted cohomology over the reals.

Relevance to physics also appears in the study of strongly self-absorbing $C^*$-algebras, as some common algebras in physics such as the canonical anticommutation relations (CAR) algebra are strongly self-absorbing. This algebra is not only relevant in quantum physics, but also in the study of Clifford algebras. By exploring the version of twisted $K$-theory whose twists arise from the CAR algebra, a deeper understanding of this algebra and its applications may be gained.

A final application of twisted $K$-theory of critical importance in mathematical physics is the work of Freed, Hopkins and Teleman in proving that the Verlinde ring of positive energy representations of a loop group is isomorphic to the equivariant twisted $K$-theory of a compact Lie group \cite{FHT1, FHT3, FHT2}; a result which is of relevance in conformal field theory. It is likely that there exists an analogous result in the higher twisted setting, where it is as of yet unknown what can replace the Verlinde ring to be isomorphic to higher twisted $K$-theory, but another object from physics may arise in this case. Pennig and Evans \cite{EvansP} hint at possible approaches.

\subsection*{Main results}

The purpose of this paper is to develop computational tools in higher twisted $K$-theory and use these to perform explicit computations. The main results that we present are to achieve this goal.

One important aspect of computing higher twisted $K$-theory groups is determining explicit generators for the groups. This is difficult using the $C^*$-algebraic formulation, and so we provide an alternative definition of the higher twisted $K$-theory groups from a topological viewpoint inspired by a result of Rosenberg \cite{RosenbergTwist}. In Theorem \ref{topological} we prove that the higher twisted $K$-theory groups can be identified with
\begin{align*}
K^0(X,\delta) &= [\cE_{\delta}, \Fred_{\OK}]^{\Aut(\OK)}; \\
K^1(X,\delta) &= [\cE_{\delta}, \Omega \Fred_{\OK}]^{\Aut(\OK)};
\end{align*}
where $\cE_{\delta}$ is the principal $\Aut(\OK)$-bundle over $X$ representing the abstract twist $\delta$, $\Fred_{\OK}$ denotes the Fredholm operators on the standard Hilbert $(\OK)$-module, $\Omega$ denotes the based loop space and $[-,-]^{\Aut(\OK)} = \pi_0(C(-,-)^{\Aut(\OK)})$ denotes unbased homotopy classes of $\Aut(\OK)$-equivariant maps. This result is used to give an explicit description of the generator of $K^1(S^{2n+1},\delta)$ in Theorem \ref{generator}.

Since the definition of higher twisted $K$-theory involves sections of algebra bundles with fibre $\OK$, another task critical to computations is to explicitly construct these bundles over spaces. When the topological space is an odd-dimensional sphere $S^{2n+1}$, we note that the clutching construction can be used to construct principal $\Aut(\OK)$-bundles, or equivalently our desired algebra bundles, over $S^{2n+1}$ by specifying a gluing map $S^{2n} \to \Aut(\OK)$. These maps are classified up to homotopy by an integer, and using Pennig and Dadarlat's link between twists and cohomology classes \cite{DP1} we see that the principal $\Aut(\OK)$-bundles over $S^{2n+1}$ are also classified by an integer. This provides an explicit geometric way of describing the twist over $S^{2n+1}$ associated to a particular cohomology class; the details are presented in Section 3.1. Another case that we consider involves restricting to decomposable cohomology classes. For a general CW complex $X$ with torsion-free cohomology, we take a $5$-class $\delta$ given by the cup product of a $2$-class $\alpha$ and a $3$-class $\beta$. By associating a principal $U(1)$-bundle to $\alpha$ and a principal $PU$-bundle to $\beta$ and defining an effective action of $U(1) \times PU$ on $\OK$, we are able to construct a principal $\Aut(\OK)$-bundle over $X$ corresponding to the cohomology class $\delta$, and obtain a more general result in Theorem \ref{decomposable_theorem}. We do not apply this result in our computations, but it is likely that it can form the basis for further research.

More directly relevant to our goal, we investigate spectral sequences for higher twisted $K$-theory. We show in Theorem \ref{AHstatement} that when $X$ is a CW complex and $\delta$ is any abstract twist over $X$, there is an analogue of the Atiyah--Hirzebruch spectral sequence which has $E_2$-term $E^{p,q}_2 = H^p(X,K^q(pt))$ and which strongly converges to the higher twisted $K$-theory $K^*(X,\delta)$. In particular, when the twist $\delta$ can be identified with a cohomology class $\delta \in H^{2n+1}(X,\ZZ)$ we see in Theorem \ref{HTSS} that the $d_{2n+1}$ differential is of the form $d_{2n+1}(x) = d_{2n+1}'(x) - \delta \cup x$ where $d_{2n+1}'$ is the differential in the ordinary Atiyah--Hirzebruch spectral sequence for topological $K$-theory, which is an operator whose image is torsion \cite{Arlettaz}. There is, in fact, a more general Segal spectral sequence that can be applied in this setting and we generalise a result of Rosenberg \cite{RosenbergLie} to obtain this sequence. Letting $F \xrightarrow{\iota} E \xrightarrow{\pi} B$ be a fibre bundle of CW complexes with $\delta$ a twist over $E$, we prove in Theorem \ref{SSS} that there is a spectral sequence with $E_2$-term $E^{p,q}_2 = H^p(B,K^q(F,\iota^* \delta))$ which strongly converges to the higher twisted $K$-theory $K^*(E,\delta)$. We also obtain more explicit information about the differentials of the higher twisted $K$-homology version of this spectral sequence in Theorem \ref{differentials}, which is useful in proving general results about the higher twisted $K$-theory of Lie groups.

We conclude by using the techniques developed to perform computations. Table \ref{table} contains the explicit results obtained in Section 5, displaying the higher twisted $K$-theory groups of odd-dimensional spheres, the product of an odd-dimensional and an even-dimensional sphere, real projective spaces and lens spaces. In each case, $\delta$ is $N \neq 0$ times the generator of the cohomology group specified in the table.

\begin{table}[!htp]
  \begin{center}
    \begin{tabular}{|c|c|c|c|}
    \hline
     $X$ & Degree & $K^0(X,\delta)$ & $K^1(X,\delta)$ \\
      \hline
      $S^{2n+1}$ & $2n+1$ & $0$ & $\ZZ_N$ \\
      $S^{2m} \times S^{2n+1}$ & $2n+1$ & $0$ & $\ZZ_N \oplus \ZZ_N$ \\
      $S^{2m} \times S^{2n+1}$ & $2m+2n+1$ & $\ZZ$ & $\ZZ \oplus \ZZ_N$ \\
      $\RR P^{2n+1}$ & $2n+1$ & $\ZZ_{2^n}$ & $\ZZ_N$ \\
      $L(n,p)$ & $2n+1$ & $\ZZ_{p^n}$ & $\ZZ_N$ \\
     \hline
    \end{tabular}
\caption{Higher twisted $K$-theory groups with $\delta = N \in H^{\operatorname{degree}}(X,\ZZ) = \ZZ$}\label{table}
  \end{center}
\end{table}

We also use the Segal spectral sequence to obtain results on the higher twisted $K$-theory of $SU(n)$. In Theorem \ref{concerned} we show that for a 5-twist $\delta$ given by $N$ times the generator of $H^5(SU(n+1),\ZZ)$, with $N$ relatively prime to $n!$, the graded group $K^*(SU(n+1),\delta)$ is isomorphic to $\ZZ_N$ tensored with an exterior algebra on $n-1$ odd generators. More generally, when $\delta$ is any twist given by $N$ times a primitive generator of $H^*(SU(n),\ZZ)$ we show in Theorem \ref{sun} that $K^*(SU(n),\delta)$ is a finite abelian group with all elements having order a divisor of a power of $N$.

Our final computations are for a class of examples which arise in spherical T-duality; the total spaces of $SU(2)$-bundles. Bouwknegt, Evslin and Mathai prove that the spherical T-duality transformation induces a degree-shifting isomorphism on the 7-twisted $K$-theory groups of these bundles \cite{BEM1, BEM2, BEM3}, but they do not consider the 5-twisted $K$-theory of these $SU(2)$-bundles. We place restrictions on the base space $M$ in order to ensure that the 5-twists of the total space of the bundle correspond exactly to its integral 5-classes, and then use the Atiyah--Hirzebruch spectral sequence to compute the 5-twisted $K$-theory in Section 5.6.

\subsection*{Outline of this paper}
The paper is organised as follows. Section 2 provides the necessary background on the Cuntz algebra $\cO_{\infty}$ to formulate higher twisted $K$-theory using Pennig's original formulation, and an alternative topological characterisation is given. In Section 3, we describe methods of producing explicit geometric representatives for twists of $K$-theory using both the clutching construction and by considering decomposable cohomology classes. Section 4 develops spectral sequences for computations, and finally Section 5 contains explicit computations of higher twisted $K$-theory groups for various spaces.

\subsection*{Acknowledgements}
The author acknowledges support from the University of Adelaide in the form of an MPhil scholarship. The author would also like to thank his principal MPhil supervisor Elder Professor Mathai Varghese for suggesting the problems in this paper, explaining techniques to construct geometric representatives for twists and his excellent support throughout the project in general. The author thanks his co-supervisor Dr Peter Hochs for his exceptional guidance and support, as well as his MPhil referees and Dr David Roberts for helpful comments.

\section{Preliminaries and formulation}

\subsection{Strongly self-absorbing \texorpdfstring{$C^*$-algebras}{C*}}

We begin by presenting the background on the Cuntz algebra $\cO_{\infty}$ necessary to the formulation of higher twisted $K$-theory. We will firstly introduce Toms and Winter's class of strongly self-absorbing $C^*$-algebras, which possess a higher Dixmier--Douady theory mirroring the Dixmier--Douady theory of the compact operators and with which higher twisted $K$-theory can be defined. Apart from having this application to $K$-theory, this class of algebras is interesting in its own right as it has proved useful in the quest of Elliott to classify all simple nuclear $C^*$-algebras \cite{TWssa}. Here we present a slightly modified but equivalent definition posed by Pennig and Dadarlat \cite{DP1}, which is more applicable to topological problems. For the original definition, see \cite{TWssa}.

\begin{definition}
A seperable and unital $C^*$-algebra $D$ is called \textit{strongly self-absorbing} if there exists a $^*$-isomorphism $\psi: D \to D \otimes D$ and a path of unitaries $u: [0,1) \to U(D \otimes D)$ such that, for all $d \in D$, $\lim\limits_{t \to 1} \| \psi(d) - u_t(d \otimes 1) u_t^* \| = 0$.
\end{definition}

We are specifically concerned with explicit examples of these algebras, the most relevant of which will be the Cuntz algebra $\cO_{\infty}$ first introduced by Cuntz \cite{Cuntz}.

\begin{definition}
The \textit{Cuntz algebra} $\mathcal{O}_n$ with $n$ generators for $n =1, 2, \cdots$ is defined to be the $C^*$-algebra generated by a set of isometries $\{S_i\}_{i = 1}^{n}$ acting on a separable Hilbert space satisfying $S_i^* S_j = \delta_{ij} I$ for $i, j = 1, \cdots, n$ and
\begin{align*}
\sum\limits_{i=1}^{n} S_i S_i^* = I.
\end{align*}
Similarly, the Cuntz algebra $\mathcal{O}_{\infty}$ with infinitely many generators is defined in an analogous way for an infinite sequence $\{S_i\}_{i \in \mathbb{N}}$ satisfying $S_i^* S_j = \delta_{ij} I$ for $i, j \in \NN$ and
\[ \sum\limits_{i=1}^{k} S_i S_i^* \leq I \]
for all $k \in \NN$.
\end{definition}

It is proved in the original reference that this definition is independent of the choice of Hilbert space and of isometries. While we will not discuss twists of $K$-theory which arise from other strongly self-absorbing $C^*$-algebras in detail, these may also be of interest and so we will list other examples of strongly self-absorbing $C^*$-algebras. These algebras are shown to be strongly self-absorbing in Toms and Winter's original paper \cite{TWssa}.

\begin{example}\
\begin{enumerate}
\item[(1)] The Cuntz algebras $\cO_2$ and $\cO_{\infty}$ are strongly self-absorbing.
\item[(2)] The Jiang--Su algebra $\cZ$ introduced by Jiang and Su \cite{JiangSu} is strongly-self absorbing.
\item[(3)] Uniformly hyperfinite (UHF) algebras (defined in III.5.1 of \cite{Davidson}) of infinite type are strongly-self absorbing.
\item[(4)] The tensor product of a UHF algebra of infinite type with $\cO_{\infty}$ is strongly-self absorbing. Note that this is the only way to form a new class of algebras out of the previous examples, as $\cO_2$ absorbs UHF algebras of infinite type, all of the examples absorb $\cZ$ and $\cO_2 \otimes \cO_{\infty} \cong \cO_2$.
\end{enumerate}
\end{example}

Although all of these algebras can be used to formulate a notion of twisted $K$-theory, it is the Cuntz algebra $\cO_{\infty}$ which is used to realise the most general notion of twist, and so this is the algebra whose properties we will explore in more detail. We are particularly interested in automorphisms of $\cO_{\infty}$ and its stabilisation, so we will present some of Dadarlat and Pennig's main results on the homotopy type of these automorphism groups and explore an action of the stable unitary group on $\cO_{\infty}$ by outer automorphisms.

Firstly, a general result showing that the automorphism group of a strongly self-absorbing $C^*$-algebra does not have an interesting homotopy type.

\begin{theorem}[Theorem 2.3 \cite{DP1}]{\label{contract}}
Let $D$ be a strongly self-absorbing $C^*$-algebra. Then the space $\Aut(D)$ is contractible.
\end{theorem}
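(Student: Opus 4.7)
The strategy is to exploit the two pieces of data provided by the definition of strong self-absorption — the $*$-isomorphism $\psi : D \to D \otimes D$ and the path of unitaries $u_t \in U(D \otimes D)$ witnessing that $\psi$ is asymptotically unitarily equivalent to the first-factor inclusion $\iota_1 : d \mapsto d \otimes 1$ — to build an explicit null-homotopy for the identity on $\Aut(D)$. First I would fix the point-norm topology on $\Aut(D)$, under which it is a Polish topological group and under which $\psi, \psi^{-1}$ and conjugation by the $u_t$ act continuously.

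The central object is the ``tensoring'' self-map $\Psi : \Aut(D) \to \Aut(D)$ defined by $\Psi(\alpha) = \psi^{-1} \circ (\alpha \otimes \id_D) \circ \psi$. The plan is to exhibit two separate homotopies of maps $\Aut(D) \to \Aut(D)$: one from $\Psi$ to $\id_{\Aut(D)}$, and one from $\Psi$ to the constant map at $\id_D$. Composing these would give a contraction of $\Aut(D)$, since $\Aut(D)$ is an H-space and hence any homotopy of the identity to a constant suffices for contractibility.

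For the first homotopy, I would use the path $u_t$ directly: for each $t \in [0,1)$, the continuous family $H_t(\alpha) := \psi^{-1} \circ \Ad(u_t) \circ \iota_1 \circ \alpha$ makes sense in $\Aut(D)$ because $u_t(\alpha(d) \otimes 1)u_t^*$ lies in $\psi(D)$ up to an error that vanishes as $t \to 1$ by the defining asymptotic condition; as $t \to 1$ this recovers $\alpha$, while at $t = 0$ a small further adjustment (by the fixed inner automorphism $\Ad(\psi^{-1}(u_0))$) connects to $\Psi(\alpha)$. For the second homotopy, I would invoke an Eilenberg-swindle argument: iterating $\psi$ identifies $D$ with $D^{\otimes \infty}$, and under this identification $\Psi(\alpha)$ corresponds to $\alpha \otimes \id \otimes \id \otimes \cdots$. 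The asymptotic flip on $D \otimes D$ (itself inner-asymptotic by a symmetric application of the strongly self-absorbing hypothesis) allows one to shift the $\alpha$-tensorand continuously one slot to the right; concatenating these shifts with a suitably rescaled parameter $t \in [0,1)$ produces a continuous path pushing $\alpha$ ``off to infinity'' and converging to $\id_D$ in the point-norm topology.

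The principal obstacle is ensuring that both homotopies are jointly continuous in $(t, \alpha)$, not merely that each fixed $\alpha$ can be connected to $\id_D$: the shift argument in the second homotopy involves an infinite composition of conjugations, and one must choose the interpolating unitaries coherently so that convergence is uniform on compact subsets of $D$ as $\alpha$ varies. This coherence is precisely what the \emph{asymptotic} nature (rather than merely approximate at finitely many points) of the unitary equivalence in the definition of strong self-absorption delivers, and translating this analytic continuity into a genuine continuous map from $[0,1] \times \Aut(D)$ into $\Aut(D)$ is the technical heart of the argument.
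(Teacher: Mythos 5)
The paper cites this result from Dadarlat--Pennig \cite{DP1} without reproducing a proof, so there is no ``paper's own argument'' to match against directly; I will assess the proposal on its merits. Your overall plan---introduce the tensoring map $\Psi(\alpha)=\psi^{-1}\circ(\alpha\otimes\id_D)\circ\psi$ and produce two homotopies, one from $\Psi$ to $\id_{\Aut(D)}$ and one from $\Psi$ to the constant at $\id_D$---has the right shape, but the implementation has two genuine problems. First, your formula $H_t(\alpha)=\psi^{-1}\circ\Ad(u_t)\circ\iota_1\circ\alpha$ does not land in $\Aut(D)$ for $t<1$: because $\iota_1$ is not surjective, this composite is a unital injective $*$-endomorphism whose image is the proper subalgebra $\psi^{-1}\bigl(u_t(D\otimes 1)u_t^*\bigr)\subsetneq D$. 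A null-homotopy of $\id_{\Aut(D)}$ must stay inside $\Aut(D)$ at \emph{every} $t$, so this needs to be repaired before the argument can start. The standard fix is to conjugate a genuine automorphism of $D\otimes D$ through $\psi$, taking instead $H_t(\alpha)=\psi^{-1}\circ\Ad(u_t)\circ(\alpha\otimes\id)\circ\Ad(u_t^*)\circ\psi$; this is an element of $\Aut(D)$ for every $t$, and since $\Ad(u_t^*)\circ\psi\to\iota_1$ pointwise by the defining condition, it still converges to $\alpha$ as $t\to 1$.

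Second, the Eilenberg swindle for the homotopy to the constant is substantially more than is needed, and it is precisely what manufactures the joint-continuity obstacle you flag at the end. A single shift suffices. Since the flip $\sigma\in\Aut(D\otimes D)$ is asymptotically inner for a strongly self-absorbing algebra---this is a theorem in the structure theory (Toms--Winter, Dadarlat--Winter), \emph{not} simply a ``symmetric application'' of the definition, which privileges the first tensor factor---$\Psi$ is homotopic through conjugations to $\Theta(\alpha):=\psi^{-1}\circ(\id_D\otimes\alpha)\circ\psi$. Now run the same corrected $u_t$-homotopy on $\Theta$. The crucial point is that $\id_D\otimes\alpha$ fixes $D\otimes 1$ pointwise, which yields the estimate
\[
\bigl\lVert\psi^{-1}\bigl(u_t\,(\id_D\otimes\alpha)(u_t^*\psi(d)u_t)\,u_t^*\bigr)-d\bigr\rVert\ \le\ 2\,\bigl\lVert\psi(d)-u_t(d\otimes 1)u_t^*\bigr\rVert,
\]
an error bound that is \emph{independent of} $\alpha$. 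Hence $\Theta$ contracts to the constant map at $\id_D$, the joint continuity in $(t,\alpha)$ comes for free from the defining asymptotic relation, and the passage to $D^{\otimes\infty}$ with an infinite concatenation of shifts is entirely avoidable. (Also, the appeal to the H-space structure in the first paragraph is a red herring: a homotopy from the identity to any constant map is literally the definition of contractibility.)
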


Upon stabilisation, the homotopy type of the automorphism group becomes much more interesting.

\begin{theorem}[Theorem 2.18 \cite{DP1}]\label{homotopy}
There are isomorphisms of groups
\begin{align*}
\pi_i(\Aut(\OK)) &\cong \begin{cases} K_0(\cO_{\infty})^{\times}_+ &\text{ if } i=0; \\ K_i(\cO_{\infty}) &\text{ if } i \geq 1; \end{cases} \\
&= \begin{cases} \ZZ_2 &\text{ if } i=0; \\ \ZZ &\text{ if } i > 0 \text{ even}; \\ 0 &\text{ if } i \text{ odd}. \end{cases}
\end{align*}
\end{theorem}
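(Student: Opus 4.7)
The plan is to identify $\Aut(\OK)$ up to weak equivalence with the connective units of topological $K$-theory and then read off the homotopy groups. The essential ingredients are that $K_0(\cO_\infty) \cong \ZZ$ with $[1]$ as generator, $K_1(\cO_\infty) = 0$, and the existence of a $^*$-isomorphism $\psi : \cO_\infty \to \cO_\infty \otimes \cO_\infty$ supplied by the strongly self-absorbing structure.

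First, I would equip $\Aut(\OK)$ with the structure of a commutative topological $H$-space. Combining the tensor product of automorphisms with the canonical stable $^*$-isomorphism $\OK \otimes \OK \cong \OK$ yields a continuous multiplication on $\Aut(\OK)$. Since by Theorem \ref{contract} the space $\Aut(\cO_\infty)$ is contractible, the implementing isomorphisms used to define this product form (after stabilisation) an essentially unique family of choices, so the product is homotopy associative and commutative. Iterating the construction using $\cO_\infty^{\otimes n}$-absorption promotes $\Aut(\OK)$ to an infinite loop space, so that $B\Aut(\OK)$ is the zeroth space of a connective spectrum $E_\bullet$.

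Second, I would identify $E_\bullet$ with the connective unit spectrum $gl_1(KU)$ of topological $K$-theory. For this, a principal $\Aut(\OK)$-bundle over a compact CW complex $X$ determines a locally trivial $\OK$-algebra bundle whose $K$-theory is canonically an invertible rank-one module over $K^0(X)$. Combined with the coincidence $K_*(\cO_\infty) \cong K_*(\CC)$, this universal property forces $E_\bullet \simeq gl_1(KU)$. The homotopy groups are then classical: $\pi_0(gl_1(KU)) = (K^0(\text{pt}))^\times = \ZZ^\times \cong \ZZ_2$, and for $i \geq 1$ one has $\pi_i(gl_1(KU)) \cong \pi_i(KU) = K_i(\text{pt})$, which equals $\ZZ$ in even degrees and $0$ in odd degrees. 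The identification of the $\pi_0$-term with $K_0(\cO_\infty)^\times_+$ follows by unwinding the induced action on $K_0$ of an automorphism of $\OK$ and noting that the two path components correspond to the two multiplicative units of $K_0(\cO_\infty) = \ZZ$.

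The main obstacle will be the identification $E_\bullet \simeq gl_1(KU)$, which is not a formal consequence of the $K$-theory coincidence alone. One must verify that every unit in each degree is realised by an honest automorphism of $\OK$, which is a concrete geometric realisation problem, and one must check that the tensor product structure on $\Aut(\OK)$ matches the ring structure on $KU$ on the nose. This should reduce to a $KK$-theoretic computation exploiting the $KK$-equivalence between $\cO_\infty$ and $\CC$, but controlling the higher homotopy coherences in the $H$-space structure, so that they assemble into a genuine map of spectra rather than merely on the level of $\pi_*$, is the delicate technical heart of Dadarlat and Pennig's argument.
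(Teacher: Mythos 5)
This theorem is cited in the paper as Theorem~2.18 of \cite{DP1} and is not reproved there, so the relevant comparison is against Dadarlat and Pennig's own argument. Your proposal runs logically backwards and is essentially circular. You propose to deduce the homotopy groups of $\Aut(\OK)$ from the equivalence of spectra $E_{\cO_\infty} \simeq gl_1(KU)$. But that spectrum-level identification is Theorem~1.1 of \cite{DP3} (cited in the paper as Theorem~\ref{Bigthm}), a later and substantially harder result whose proof \emph{takes as input} the computation $\pi_i(\Aut(\OK)) \cong K_i(\cO_\infty)$ for $i\geq 1$ and $\pi_0 \cong K_0(\cO_\infty)^\times_+$. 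You cannot first prove the spectrum equivalence and then read off homotopy groups from it, because matching the homotopy groups degreewise is step one in establishing that equivalence, and you acknowledge exactly this difficulty in your last paragraph without resolving it.

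The actual proof in \cite{DP1} is direct and operator-algebraic, not stable-homotopical. The main ingredients are: (i) for strongly self-absorbing $D$, any two unital $^*$-homomorphisms $D \to D$ are asymptotically unitarily equivalent, and $\Aut(D)$ is contractible (the paper's Theorem~\ref{contract}); (ii) after stabilisation, unital $^*$-endomorphisms of $D\otimes\cK$, localised or full projections in $D\otimes\cK$, and $K_0(D)^\times_+$ all parametrise the path components of $\Aut(D\otimes\cK)$ via the action on $K_0$ and Kasparov-theoretic classification results for maps between Kirchberg-type algebras; and (iii) the higher homotopy groups are obtained by delooping, i.e.\ replacing $D$ by $D\otimes C_0(\RR^i)$ (or equivalently studying $\Aut$ of the suspension), so that $\pi_i(\Aut(D\otimes\cK))$ becomes a $\pi_0$-computation and reduces to $K_i(D)$. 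None of this requires, or should assume, the existence of an $E_\infty$ ring structure or an identification with $gl_1(KU)$; the $H$-space and infinite-loop-space structure on $\Aut(\OK)$ is also established in \cite{DP1}, but as an \emph{output} rather than an input of the homotopy group computation. Your final computation of $\pi_i(gl_1(KU))$ and the reading of $\pi_0$ as the two sign units of $K_0(\cO_\infty)=\ZZ$ (with all of $\ZZ$ positive since $\cO_\infty$ is purely infinite) are correct; the gap is entirely in the route you take to get there.
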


To gain further insight into the automorphisms of $\cO_{\infty}$, in particular the outer automorphisms, we investigate an action of the infinite unitary group $U(\infty)$ on $\cO_{\infty}$. Note that this is one of the significant differences between the Cuntz algebra and the algebra of compact operators -- all automorphisms of $\cK$ are inner but this is not the case for $\cO_{\infty}$.

We follow the work of Enomoto, Fujii, Takehana and Watatani in describing an action of $U(\infty)$ on $\cO_{n}$ \cite{EFTW}. Let $M_n(\CC)$ be the algebra of all $n \times n$ matrices over $\CC$. For any $u = (u_{ij}) \in M_n(\CC)$, we define a map $\alpha_u: \cO_{\infty} \to \cO_{\infty}$ to act on the generators $S_1, S_2, \cdots$ of $\cO_{\infty}$ by
\[ \alpha_u(S_j) = \begin{cases} \sum\limits_{i=1}^{n} u_{ij} S_i &\text{ for } j = 1, \cdots, n; \\ S_j &\text{ for } j > n; \end{cases} \]
and extend this map to $\cO_{\infty}$ such that it is a homomorphism. It is clear that $\alpha_u \circ \alpha_{u'} = \alpha_{u u'}$, but for general $u$ this map does not give an automorphism of $\cO_{\infty}$. For example, taking $u$ to be the zero matrix we certainly do not obtain an automorphism, so we seek a class of matrices for which $\alpha_u$ is an automorphism.

\begin{lemma}[Lemma 4 \cite{EFTW}]
For $u = (u_{ij}) \in M_n(\CC)$, the map $\alpha_u$ defines an automorphism of $\cO_{\infty}$ if and only if $u \in U(n)$.
\end{lemma}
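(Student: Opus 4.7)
The plan is to verify the Cuntz relations under the candidate map and observe that the relation $S_i^* S_j = \delta_{ij} I$ translates directly into the matrix identity $u^* u = I$.

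For the "$\Leftarrow$" direction, I would start by assuming $u \in U(n)$ and checking that the prescribed images $\alpha_u(S_j)$ satisfy the defining relations of $\cO_\infty$ so that, by the universal property, the assignment extends to a well-defined $*$-homomorphism. A short computation gives
\[
\alpha_u(S_i)^* \alpha_u(S_j) \;=\; \sum_{k,\ell} \overline{u_{ki}} u_{\ell j} \, S_k^* S_\ell \;=\; (u^* u)_{ij}\, I \quad (i,j \le n),
\]
so $u^*u = I$ yields $\delta_{ij}I$; the mixed cases ($i\le n < j$ or vice versa) and the case $i,j > n$ are immediate. Next I would verify the partial isometry sums: for $k \le n$, expanding $\sum_{i=1}^k \alpha_u(S_i)\alpha_u(S_i)^*$ produces $\sum_{j,j'} (uu^*)_{jj'}\, S_j S_{j'}^*$, which collapses to $\sum_{j=1}^k S_j S_j^* \leq I$ using $uu^* = I$; for $k > n$ the additional terms are untouched by $\alpha_u$ so the bound persists. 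Surjectivity (hence automorphy) follows from the functorial identity $\alpha_u \circ \alpha_{u^{-1}} = \alpha_{uu^{-1}} = \alpha_I = \id$, using that $u^{-1} = u^* \in U(n)$.

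For the "$\Rightarrow$" direction, I would suppose $\alpha_u$ descends to an automorphism, so in particular to a well-defined $*$-homomorphism of $\cO_\infty$. Then $\alpha_u$ must respect the relation $S_i^* S_j = \delta_{ij} I$, and the same expansion as above forces $(u^* u)_{ij} = \delta_{ij}$ for all $i, j \le n$, i.e.\ $u^* u = I$. Since $u$ is an $n \times n$ matrix this already gives $u \in U(n)$.

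There is no serious obstacle here: the argument is essentially a bookkeeping computation linking the algebraic Cuntz relations to the matrix identity $u^*u = I$. The only point requiring slight care is the appeal to the universal property of $\cO_\infty$ to pass from the bare assignment on generators to a genuine $*$-homomorphism on the full $C^*$-algebra; once the relations are verified this is standard.
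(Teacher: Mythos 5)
Your overall strategy is the standard one and is essentially what the cited computation in Enomoto--Fujii--Takehana--Watatani amounts to: verify the Cuntz relations for the images $\alpha_u(S_j)$, invoke the universal property of $\cO_\infty$ to get a $*$-endomorphism, and then conclude automorphy from the composition law $\alpha_u\circ\alpha_{u'}=\alpha_{uu'}$. The $S_i^*S_j$ computation and the ``$\Rightarrow$'' direction are both correct. (For the converse, it is worth saying explicitly that an automorphism of the unital algebra $\cO_\infty$ preserves $I$, which is why $\alpha_u(S_i^*S_j)=\delta_{ij}I$ translates to $(u^*u)_{ij}=\delta_{ij}$; and that for a square matrix $u^*u=I$ already forces $u\in U(n)$, as you note.)

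There is, however, a genuine computational error in your verification of the partial sum inequality. For $k\le n$ you claim that
\[
\sum_{i=1}^{k}\alpha_u(S_i)\alpha_u(S_i)^*
=\sum_{j,j'}(uu^*)_{jj'}\,S_jS_{j'}^*,
\]
but the coefficient that actually appears is $\sum_{i=1}^{k}u_{ji}\overline{u_{j'i}}$, which equals $(uu^*)_{jj'}$ only when $k=n$. For $k<n$ the sum does not collapse to $\sum_{j=1}^{k}S_jS_j^*$, and in fact $\sum_{i=1}^{k}\alpha_u(S_i)\alpha_u(S_i)^*$ is generally not $\ZZ$-linearly expressible in the original $S_jS_j^*$ at all. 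The correct (and simpler) justification is that each $\alpha_u(S_i)\alpha_u(S_i)^*$ is a projection because $\alpha_u(S_i)$ is an isometry, and these projections are pairwise orthogonal because $\alpha_u(S_i)^*\alpha_u(S_j)=0$ for $i\ne j$ (which you have already shown); a finite sum of pairwise orthogonal projections is itself a projection, hence $\le I$. In other words the inequality $\sum_{i=1}^{k}S_iS_i^*\le I$ in the defining relations of $\cO_\infty$ is automatically inherited once the orthogonality relations $S_i^*S_j=\delta_{ij}I$ are verified, so no further matrix computation is needed. With that repair, the argument is complete.
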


The proof of this lemma is a straightforward computation. This provides an action of $U(n)$ on $\cO_{\infty}$ for all $n = 1, 2, \cdots$ which we will see extends to an action of $U(\infty)$ on $\cO_{\infty}$. Here we are taking $U(\infty)$ to be the direct limit of $U(n) \xhookrightarrow{\iota} U(n+1)$ with $\iota(A) = \text{diag}(A,1)$. Taking $u \in U(\infty)$, there exists a finite representative $\hat{u} \in U(n)$ for some $n$, and every representative of $u$ will be of the form $\text{diag}(\hat{u},1,\cdots)$. Therefore all representatives of $u$ define the same action on $\cO_{\infty}$, so we define the action of $u$ on $\cO_{\infty}$ to be that of its finite representatives. This provides a map $\alpha: U(\infty) \to \Aut(\cO_{\infty})$ which satisfies some desirable properties; in particular, $\alpha$ lands in the group $\Out(\cO_{\infty})$ of outer automorphisms of $\cO_{\infty}$.

\begin{theorem}\label{action}
The map $\alpha: U(\infty) \to \Out(\cO_{\infty})$ is continuous and injective.
\end{theorem}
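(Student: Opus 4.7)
The plan is to verify continuity and injectivity separately.

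For continuity, recall that $U(\infty) = \varinjlim_n U(n)$ carries the colimit topology, so a map out of $U(\infty)$ is continuous if and only if its restriction to each $U(n)$ is continuous. The natural topology on $\Aut(\cO_{\infty})$ is the point-norm topology, and $\Out(\cO_{\infty})$ carries the quotient topology. Since the $*$-algebra $\cP$ of polynomials in $S_1, S_2, \ldots$ and their adjoints is norm-dense in $\cO_{\infty}$, and each $\alpha_u$ is contractive, it suffices to show that for every $P \in \cP$ and every $n$, the map $U(n) \ni u \mapsto \alpha_u(P) \in \cO_{\infty}$ is norm-continuous. If $P$ involves only $S_1, \ldots, S_m$ and their adjoints, then for $u \in U(n)$ with $n \ge m$, $\alpha_u(P)$ is a polynomial in the $S_i$'s whose scalar coefficients are polynomials in the entries $u_{ij}, \bar u_{ij}$; hence continuous in $u$.

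For injectivity, suppose $u \in U(\infty)$ with representative $\hat u \in U(n)$ satisfies $\alpha_u = \Ad(v)$ for some unitary $v \in \cO_{\infty}$. The first step is to extract gauge-homogeneity of $v$. The gauge action $\gamma\colon U(1) \to \Aut(\cO_{\infty})$, $\gamma_z(S_j) = z S_j$, commutes with $\alpha_u$ because $\alpha_u$ is linear on the grade-$1$ span of the generators; a direct check gives $\gamma_z(\alpha_u(S_j)) = z\alpha_u(S_j) = \alpha_u(\gamma_z(S_j))$. Consequently $\Ad(v) = \alpha_u = \gamma_z \alpha_u \gamma_z^{-1} = \Ad(\gamma_z(v))$, so $\gamma_z(v) v^*$ lies in the center of $\cO_{\infty}$. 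Since $\cO_{\infty}$ is simple, the center is $\CC\cdot 1$, and continuity in $z$ forces $\gamma_z(v) = z^N v$ for some $N \in \ZZ$; i.e.\ $v$ is gauge-homogeneous of degree $N$.

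The second step uses that $\alpha_u(S_k) = S_k$ for $k > n$, so $v$ commutes with the sub-$C^*$-algebra $\cB = C^*(S_{n+1}, S_{n+2}, \ldots) \cong \cO_{\infty}$. Combining gauge-homogeneity with commutation with the infinitely many isometries $S_k$, $k > n$, I expand $v$ in Cuntz words $S_\mu S_\nu^*$ with $|\mu| - |\nu| = N$ and argue that the relation $vS_k = S_k v$ for all $k > n$ kills every non-scalar summand; this forces $v \in \CC\cdot 1$, so $\alpha_u = \Ad(v) = \id$. Then $\alpha_u(S_j) = \sum_i u_{ij} S_i = S_j$ for $1 \le j \le n$, and since the $S_i$ are isometries with pairwise orthogonal ranges and thus linearly independent, $u_{ij} = \delta_{ij}$, i.e.\ $u = 1$ in $U(\infty)$.

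The main obstacle is the relative-commutant computation in the second step: showing that a gauge-homogeneous unitary commuting with the sub-Cuntz algebra $\cB$ must be scalar. This is the technical heart of the argument, and it is where one must use the specific structure of $\cO_{\infty}$ (infinitely many mutually orthogonal isometric generators and simplicity) rather than just formal properties of strongly self-absorbing algebras.
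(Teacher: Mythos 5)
The paper itself gives no in-text proof, deferring to Theorem~4 of \cite{EFTW} (stated for the finite Cuntz algebras $\cO_n$) and to the author's thesis \cite{Brook} for the $\cO_\infty$ continuity details, so there is no argument in the paper to compare against directly. Your continuity argument is correct and complete: the colimit topology reduces matters to each $U(n)$; density of the $*$-algebra of Cuntz polynomials together with the isometry of each $\alpha_u$ reduces further to a single polynomial $P$; and $\alpha_u(P)$ has finitely many Cuntz-word terms with coefficients polynomial in the $u_{ij}$, $\bar u_{ij}$.

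For injectivity your strategy is the right one, but you correctly flag the relative-commutant computation as the missing piece, and as sketched (``expand $v$ in Cuntz words'') it does not go through, since general elements of $\cO_\infty$ have no canonical word expansion. The claim is true and the gap is short to fill, in a way that also makes your Step~1 (gauge homogeneity) superfluous. If $v$ commutes with every $S_k$, $k > n$, then $S_k^* v S_k = v$. Choose finite linear combinations $P_m$ of Cuntz words $S_\alpha S_\beta^*$ with $P_m \to v$ in norm, and let $\lambda_m$ be the coefficient of the empty word in $P_m$. For $k$ larger than every index occurring in $P_m$, one has $S_k^* (S_\alpha S_\beta^*) S_k = 0$ whenever $(\alpha,\beta) \ne (\emptyset,\emptyset)$, because the factor $S_k^* S_{a_1}$ or the factor $S_{b_1}^* S_k$ vanishes; hence $S_k^* P_m S_k = \lambda_m 1$. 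Since $S_k$ is an isometry, $\|\lambda_m 1 - v\| = \|S_k^*(P_m - v)S_k\| \le \|P_m - v\| \to 0$, so $v$ is scalar, and then $u_{ij} = \delta_{ij}$ as you observe. One remark: this tail argument is genuinely specific to the pair $(\cO_\infty, U(\infty))$, where each $u$ fixes all but finitely many generators; for $\cO_n$ with $u \in U(n)$ there is no tail, so the argument of \cite{EFTW} that the paper cites must proceed differently, and your proof is an alternative route rather than a reproduction of the cited one.
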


This is sketched for $\cO_n$ in Theorem 4 of \cite{EFTW}, and more details for continuity in the case of $\cO_{\infty}$ are provided in Theorem 2.2.7 of \cite{Brook}.

This action provides further insight into $\cO_{\infty}$ -- there are non-trivial outer automorphisms of this algebra, and a space as large as $U(\infty)$ can act effectively via these automorphisms. Given these preliminaries, we are now in a position to present the formulation of higher twisted $K$-theory by Pennig.

\subsection{Formulation}

The critical step in moving from the topological $K$-theory of a compact space $X$ to the twisted $K$-theory is replacing the algebra of continuous complex-valued functions on $X$ with the algebra of sections of a bundle of compact operators over $X$. The next natural extension of this would be to replace the algebra of compact operators with a different $C^*$-algebra, and one might expect that using a strongly self-absorbing $C^*$-algebra gives the desired construction. This is not quite the case; in fact, $\Aut(D)$ is contractible for all strongly self-absorbing $D$ as stated in Theorem \ref{contract} and therefore there are no non-trivial algebra bundles with fibre $D$ over $X$. Instead, we take the stabilisation of the strongly self-absorbing $C^*$-algebra, the automorphism group of which has a far more interesting homotopy type as mentioned in Theorem \ref{homotopy} for $\cO_{\infty}$ specifically. This culminates in one of the main theorems of Pennig and Dadarlat's paper.

\begin{theorem}[Theorem 3.8 (a), (b) \cite{DP1}]
Let $X$ be a compact metrisable space and let $D$ be a strongly self-absorbing $C^*$-algebra. The set $Bun_X(D \otimes \cK)$ of isomorphism classes of algebra bundles over $X$ with fibre $D \otimes \cK$ becomes an abelian group under the operation of tensor product. Furthermore, $B \Aut(D \otimes \cK)$ is the first space in a spectrum defining a cohomology theory ${E_D}^{\bullet}$.
\end{theorem}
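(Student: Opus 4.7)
The plan is to establish both parts simultaneously by showing that the tensor product of algebra bundles equips $B\Aut(D \otimes \cK)$ with an infinite loop space (equivalently $E_\infty$) structure, from which the abelian group law on $\Bun_X(D \otimes \cK)$ and the spectrum defining $E_D^\bullet$ emerge in parallel.

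First one defines the operation: given $\cA, \cB \in \Bun_X(D \otimes \cK)$, form the fibrewise maximal $C^*$-tensor product $\cA \otimes_X \cB$, which is locally trivial with fibre $(D \otimes \cK) \otimes (D \otimes \cK)$. Using $\cK \otimes \cK \cong \cK$ together with the strong self-absorption isomorphism $D \otimes D \cong D$, the fibre is identified with $D \otimes \cK$, yielding an element of $\Bun_X(D \otimes \cK)$. It is then necessary to check that the isomorphism class of the resulting bundle is independent of these fibre-identifications; the essential point is that strong self-absorption supplies a contractible space of $^*$-isomorphisms $D \otimes D \to D$ (modulo approximate unitary equivalence), so any two choices differ by a section of a bundle with contractible structure. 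Associativity, commutativity, and existence of the identity (the trivial bundle $X \times (D \otimes \cK)$) then follow from the corresponding properties of the $C^*$-tensor product on fibres.

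The main obstacle is promoting this commutative monoid structure first to a group, and more ambitiously to an $E_\infty$-space structure on $B\Aut(D \otimes \cK)$. Using $\Bun_X(D \otimes \cK) \cong [X, B\Aut(D \otimes \cK)]$, the tensor product induces a map $B\Aut(D \otimes \cK) \times B\Aut(D \otimes \cK) \to B\Aut(D \otimes \cK)$. The path of unitaries in the definition of a strongly self-absorbing $C^*$-algebra is precisely what makes the two inclusions $a \mapsto a \otimes 1$ and $a \mapsto 1 \otimes a$ into $(D \otimes \cK) \otimes (D \otimes \cK)$ homotopic after identification with $D \otimes \cK$, giving the $H$-space axioms. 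To obtain the higher coherences required by an $E_\infty$-operad, one iterates the strong self-absorption property: the symmetric monoidal groupoid of $(D \otimes \cK)$-bundles over a point delivers a $\Gamma$-space whose realisation carries an $E_\infty$-structure.

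With the $E_\infty$-structure in place, applying an infinite loop space recognition principle such as Segal's $\Gamma$-space machinery or May's operadic version produces a connective spectrum $E_D$ whose first space is $B\Aut(D \otimes \cK)$. Since $B\Aut(D \otimes \cK)$ is path-connected, the $H$-space structure is automatically group-like, so no group completion is required. The abelian group structure on $\Bun_X(D \otimes \cK) = [X, B\Aut(D \otimes \cK)]$ then follows because mapping into any infinite loop space yields an abelian group, which establishes (a), while the spectrum itself gives (b). The hard step throughout is establishing enough coherence for the $E_\infty$-structure; this is where strong self-absorption is essential, providing not merely an isomorphism $D \otimes D \cong D$ but a coherent contractible space of such identifications, which is exactly what is needed for the higher operadic data.
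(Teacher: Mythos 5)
This theorem is cited from [DP1] (Dadarlat--Pennig, Theorem~3.8) without proof in the paper under review, so there is no in-text argument to compare against; what follows is an assessment of your proposal on its own terms and against what the cited source actually does.

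Your sketch captures the conceptual core of the Dadarlat--Pennig argument: use the fibrewise tensor product together with the contractibility phenomena that strong self-absorption supplies to obtain not merely a commutative monoid but a coherently commutative ($E_\infty$) structure, then invoke an infinite loop space recognition principle to produce the spectrum, and read off the abelian group structure on $\Bun_X(D \otimes \cK) \cong [X, B\Aut(D \otimes \cK)]$ from the fact that maps into an infinite loop space form an abelian group. The observation that $B\Aut(D \otimes \cK)$ is connected, hence automatically grouplike, so that no group completion is needed, is correct and is the right way to see that inverses come for free. The role you assign to the strong self-absorption data --- the isomorphism $D \otimes D \cong D$ and the path of unitaries giving a coherent homotopy between the two unital embeddings --- is precisely what makes the higher operadic coherences work, and you are right that this is the crux. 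Two minor points of imprecision worth flagging. First, the well-definedness of the tensor product on isomorphism classes rests on the contractibility of $\Aut(D)$ (and hence of $\Aut(D \otimes D)$ and of the space of $*$-isomorphisms $D \otimes D \to D$) in the point-norm topology; the qualifier ``modulo approximate unitary equivalence'' is not needed and slightly muddies the statement --- the relevant space of isomorphisms is genuinely contractible as a topological space, which is Theorem~2.3 of [DP1]. Second, the identification $\Bun_X(D \otimes \cK) \cong [X, B\Aut(D \otimes \cK)]$ is itself a theorem that requires verifying topological niceness of $\Aut(D \otimes \cK)$ (well-pointedness, CW homotopy type of the classifying space), a nontrivial portion of [DP1] that your plan implicitly presupposes. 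With those caveats, the architecture of your proposal --- commutative monoid from tensor product, $E_\infty$-coherence from self-absorption, recognition principle for the spectrum, connectedness for group-likeness --- is faithful to the route that [DP1] follows and correctly identifies where the strongly self-absorbing hypothesis does its essential work.
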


While this result is interesting from a homotopy-theoretic point of view, it does not yet tell us that we can obtain the twists of $K$-theory using this construction. What we want is for the cohomology theory ${E_D}^{\bullet}$ to be $gl_1(KU)^{\bullet}$, so that the twists of $K$-theory may be identified with algebra bundles with fibre $D \otimes \cK$. The cohomology theory obtained, however, depends on the choice of strongly self-absorbing $C^*$-algebra. In fact, in the introduction of \cite{DP3} the authors claim that using the Jiang--Su algebra $\cZ$ yields a subset of twists of $K$-theory where $\ZZ_2 \times BU$ is replaced by $\{1\} \times BU$, and using a tensor product of a UHF algebra of infinite type with $\cO_{\infty}$ yields twists for localisations of $KU$. The full set of twists is the subject of the main theorem of \cite{DP3}.

\begin{theorem}[Adapted from Theorem 1.1 \cite{DP3}]\label{Bigthm}
The twists of $K$-theory over $X$ are classified by algebra bundles over $X$ with fibre $\OK$. To be more precise, ${E_{\cO_{\infty}}}^{\bullet} \simeq gl_1(KU)^{\bullet}$ and hence $Bun_X(\OK) \cong gl_1(KU)^1(X)$.
\end{theorem}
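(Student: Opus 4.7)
The plan is to establish the equivalence of spectra $E_{\cO_\infty}^\bullet \simeq gl_1(KU)^\bullet$, from which the identification $Bun_X(\OK) \cong gl_1(KU)^1(X)$ follows immediately by reading off the first cohomology group. The strategy has three components: (i) set up both sides as connective spectra, (ii) construct a natural comparison map of spectra, and (iii) show it induces an isomorphism on all homotopy groups.

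For (i), $gl_1(KU)$ is by construction the unit spectrum of the ring spectrum $KU$. For $E_{\cO_\infty}$, the key input is that the isomorphism $\OK \otimes \OK \cong \OK$, made coherent by the path of unitaries in the strongly self-absorbing axiom, gives $\Aut(\OK)$ the structure of a group-like $E_\infty$-space under tensor product of automorphisms, so $B\Aut(\OK)$ deloops infinitely to form a spectrum. For (ii), I would construct the map by observing that an $\OK$-bundle $\cA \to X$ produces a twisted $K^0(X)$-module $K^0(X, \cA)$ which is locally free of rank one; tensoring with $\cA$ thus defines an invertible natural automorphism of the functor $K^0$, yielding an element of $gl_1(KU)^1(X)$. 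Naturality in $X$ together with compatibility of the tensor products on both sides promotes this assignment to a map of spectra $E_{\cO_\infty} \to gl_1(KU)$.

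For step (iii), by Theorem \ref{homotopy}, $\pi_i(B\Aut(\OK))$ is $\ZZ_2$ in degree $1$, is $\ZZ$ in odd degrees $\geq 3$, and vanishes elsewhere. On the other side, from $BU \simeq K(\ZZ, 2) \times BSU$ one obtains $BGL_1(KU_0) \simeq K(\ZZ_2, 1) \times K(\ZZ, 3) \times BBSU$, whose homotopy groups — computed using $\pi_i(BBSU) = \pi_{i-3}(SU)$ and Bott periodicity — match the above sequence degree-by-degree. In degrees $1$ and $3$ the comparison map recovers the classical identifications of $H^1(X, \ZZ_2)$-twists with graded line bundles and $H^3(X, \ZZ)$-twists with stable principal $PU$-bundles, so there the map is visibly a $\pi_*$-isomorphism.

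The main obstacle will be matching the higher homotopy groups $\pi_i$ for $i \geq 5$, where the $K$-theory twists live in the abstract factor $BBSU$ with no classical geometric interpretation and the entire novelty of the theorem resides. To handle this I would construct explicit $\OK$-bundles over odd-dimensional spheres realising each generator of $\pi_{2k}(\Aut(\OK))$ coming from Theorem \ref{homotopy}, and verify that their classes map to generators on the $gl_1(KU)$ side by computing the induced action on $K^*(S^{2k+1})$. Alternatively, one can appeal to an abstract uniqueness principle: any map of connective spectra between $E_{\cO_\infty}$ and $gl_1(KU)$ which is compatible with their multiplicative structures and agrees in low degrees is forced to be an equivalence, reducing the infinitely many checks to a tractable finite verification.
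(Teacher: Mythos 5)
The paper does not actually prove this theorem: it is stated as ``Adapted from Theorem 1.1 \cite{DP3}'' and treated as a black box, with the surrounding text explicitly saying that ``the proof requires heavy machinery from stable homotopy theory, much of which is built up over the series of three papers'' \cite{DP2,DP3,DP1}. So there is no in-paper argument to compare against; all you can be judged on is whether your sketch would reassemble the Dadarlat--Pennig proof.

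As such a sketch, your three-step architecture (both sides become connective spectra, build a comparison map, verify a $\pi_*$-isomorphism) is the right shape, and step (i) is on solid footing given Theorem \ref{homotopy} and the cited infinite-loop-space structure on $B\Aut(D\otimes\cK)$. The substantive gaps are in (ii) and (iii). In (ii), going from ``an $\OK$-bundle produces a locally free rank-one $K^0$-module'' to ``an invertible natural automorphism of $K^0$, hence a class in $gl_1(KU)^1(X)$'' is not a construction of a spectrum map -- it is essentially a restatement of the conclusion of the theorem. What is actually required is a map of grouplike $E_\infty$-spaces $B\Aut(\OK)\to BGL_1(KU)$, coherently compatible with the tensor-product $E_\infty$-structures on both sides; producing that coherence is precisely the hard core of \cite{DP3}, and nothing in your step (ii) engages with it. In (iii), the ``abstract uniqueness principle'' you appeal to -- that a multiplication-compatible map of connective spectra agreeing in low degrees must be an equivalence -- is not a true general statement; two connective spectra can have isomorphic homotopy groups without being equivalent, and a ring map that is an isomorphism in a range need not be a global equivalence absent a specific rigidity theorem, which you would need to identify and justify. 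Your fallback of verifying on spheres also presupposes the existence of the map from (ii), so it cannot repair that gap. A small arithmetic slip as well: $\pi_i(BBSU)=\pi_{i-2}(SU)$, not $\pi_{i-3}(SU)$ (each $B$ shifts by one); the groups still line up with Theorem \ref{homotopy}, but the index is off.
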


The significance of this theorem should not be overlooked. The proof requires heavy machinery from stable homotopy theory, much of which is built up over the series of three papers by the authors \cite{DP2,DP3,DP1}. It is only through this deep understanding of the abstract notion of twist that the authors were able to determine an appropriate model for the twists of $K$-theory using geometry and operator algebras.

Given this geometric notion of twist, we are now able to define the higher twisted $K$-theory groups. Note that our algebra bundles are fibre bundles where the trivialisation maps restrict to algebra isomorphisms on the fibres, and since we are working with $C^*$-algebras we see that the space of continuous sections of an algebra bundle over a compact Hausdorff space $X$ itself forms a $C^*$-algebra equipped with the induced norm and involution from the fibres. Furthermore, if $X$ is only a locally compact Hausdorff space then there is a sensible notion of a continuous section of an algebra bundle $\cA$ over $X$ vanishing at infinity, defined in much the same way as $C_0(X)$ and denoted $C_0(X,\cA)$. It is by taking the operator algebraic $K$-theory of this $C^*$-algebra that we wish to define higher twisted $K$-theory.

\begin{definition}
The higher twisted $K$-theory of the locally compact Hausdorff space $X$ with twist $\delta$ represented by the algebra bundle $\OK \to \cA_{\delta} \xrightarrow{\pi} X$ is defined to be $K^n(X,\delta) = K_n(C_0(X,\cA_{\delta}))$.
\end{definition}

This is actually not the definition originally given by Pennig -- he follows the homotopy-theoretic approach of using bundles of spectra and defines the higher twisted $K$-theory groups to be colimits of certain homotopy groups. The equivalent characterisation that we present above is given in his Theorem 2.7(c) \cite{Pennig}. As discussed earlier, replacing $\cO_{\infty}$ with other strongly self-absorbing $C^*$-algebras here will result in different versions of twisted $K$-theory which will likely also produce interesting results. For instance, Evans and Pennig use infinite UHF-algebras corresponding to twists of localisations of $K$-theory in a recent paper \cite{EvansP}.


As one might expect, there are also notions of relative higher twisted $K$-theory and higher twisted $K$-homology. The relative theory can be used to show that higher twisted $K$-theory forms a generalised cohomology theory; we will not need this, but the details are presented in Definition 2.6 and Theorem 2.7 of \cite{Pennig}. While we will be focusing on the twisted cohomology version of $K$-theory, the twisted homology version will be important in some computations and so we include this definition here.

\begin{definition}{\label{homology}}
The higher twisted $K$-homology of the locally compact Hausdorff space $X$ with twist $\delta$ represented by the algebra bundle $\OK \to \cA_{\delta} \xrightarrow{\pi} X$ is defined to be $K_n(X,\delta) = KK_n(C_0(X,\cA_{\delta}),\cO_{\infty})$.
\end{definition}

Again, this is not the way that Pennig initially defines higher twisted $K$-homology -- he introduces the topological definition via $\infty$-categories -- but the definition using $KK$-theory is shown to be equivalent in Corollary 3.5 using a Poincar\'e duality homomorphism \cite{Pennig}.

Of course, these definitions can be used to prove a variety of expected properties about higher twisted $K$-theory, including functoriality, cohomology properties and the existence of a graded module structure. These are proved in \cite{Pennig}, and the graded module structure in particular is explored in detail in Section 4.1 of \cite{Brook}. We note that the appropriate domain category for the higher twisted $K$-theory functor is the category of locally compact Hausdorff spaces equipped with algebra bundles with fibre $\OK$, such that a morphism $(X,\cA_{\delta_X}) \to (Y,\cA_{\delta_Y})$ is a proper map $f: X \to Y$ together with an algebra isomorphism $\theta: f^* \cA_{\delta_Y} \to \cA_{\delta_X}$, to ensure a relationship between the twist on $X$ and the twist on $Y$.

A corollary to Theorem \ref{Bigthm} is that taking the $K$-theory of a space $X$ twisted by the trivial algebra bundle simply returns the ordinary topological $K$-theory of $X$, while taking an algebra bundle which corresponds to a classical twist -- the tensor product of a bundle of compact operators with the trivial bundle with fibre $\cO_{\infty}$ -- returns the ordinary twisted $K$-theory of $X$. These results are reproved in Proposition 2.3.10 and Proposition 2.3.11 of \cite{Brook}. The former is a straightforward application of the K\"unneth theorem in $C^*$-algebraic $K$-theory, and the latter is proved using results on the sections of tensor product bundles. As expected, this shows that higher twisted $K$-theory contains all of the information of topological and classical twisted $K$-theory, along with a great deal more.

Another basic property of higher twisted $K$-theory which we will make use of is the Mayer--Vietoris sequence, which will be of critical importance for computations.

\begin{proposition}[Theorem 2.7(f) \cite{Pennig}]\label{MV}
Let $X = U_1 \cup U_2$ for closed sets $U_k$ whose interiors cover $X$. Let $i_k: U_k \to X$ and $j_k: U_1 \cap U_2 \to U_k$ denote inclusion, and $\delta|_{U_k} = i_k^* \delta$ and $\delta|_{U_1 \cap U_2} = (i_1 \circ j_1)^* \delta$ denote restriction of the twist $\delta$ over $X$ to the corresponding subspaces. Then there is a six-term Mayer--Vietoris sequence as follows:
\[
\begin{tikzcd}
K^0(X,\delta) \ar{r}{(i_1^*,i_2^*)} & K^0(U_1,\delta|_{U_1})\oplus K^0(U_2,\delta|_{U_2}) \ar{r}{j_1^*-j_2^*} & K^0(U_1 \cap U_2,\delta|_{U_1 \cap U_2}) \ar{d}{\partial_0} \\
K^1(U_1 \cap U_2,\delta|_{U_1 \cap U_2}) \ar{u}{\partial_1} & K^1(U_1,\delta|_{U_1}) \oplus K^1(U_2,\delta|_{U_2}) \ar{l}{j_1^*-j_2^*} & K^1(X,\delta) \ar{l}{(i_1^*,i_2^*)}.
\end{tikzcd}
\]
\end{proposition}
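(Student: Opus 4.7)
The strategy is to realise the six-term Mayer--Vietoris sequence as the six-term exact sequence in operator $K$-theory coming from a short exact sequence of section $C^*$-algebras, following the template of the classical Mayer--Vietoris proof for topological $K$-theory and its classical twisted analogue. Write $A = C_0(X,\cA_\delta)$, $A_k = C_0(U_k,\cA_\delta|_{U_k})$ for $k=1,2$, and $A_{12} = C_0(U_1 \cap U_2,\cA_\delta|_{U_1 \cap U_2})$. These are honest $C^*$-algebras because the restriction of an algebra bundle to a closed subset is again an algebra bundle, and because sections transform under $\ast$-algebra operations fibrewise.

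First I would introduce the restriction and difference $\ast$-homomorphisms
\[
0 \longrightarrow A \xrightarrow{\;\Phi\;} A_1 \oplus A_2 \xrightarrow{\;\Psi\;} A_{12} \longrightarrow 0,
\]
defined by $\Phi(f) = (f|_{U_1},f|_{U_2})$ and $\Psi(f_1,f_2) = f_1|_{U_1 \cap U_2} - f_2|_{U_1 \cap U_2}$, and verify that this sequence is exact. Injectivity of $\Phi$ is immediate from $X = U_1 \cup U_2$, and the inclusion $\im\Phi \subseteq \ker\Psi$ is obvious. For the reverse inclusion, a pair $(f_1,f_2) \in \ker\Psi$ has restrictions agreeing on $U_1 \cap U_2$, so since $U_1,U_2$ are closed with union $X$ the gluing lemma produces a global continuous section of $\cA_\delta$ which vanishes at infinity because each $f_k$ does.

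The subtle step is surjectivity of $\Psi$, and this is where the hypothesis on the interior cover is genuinely used. Since $X$ is normal with $U_1^\circ \cup U_2^\circ = X$, there exists a partition of unity $\rho_1 + \rho_2 = 1$ with $\operatorname{supp}(\rho_k) \subseteq U_k^\circ$. Given $h \in A_{12}$, the section $\rho_2 h$ on $U_1 \cap U_2$ vanishes on $(U_1 \cap U_2) \setminus U_2^\circ$, hence extends by zero to a continuous section $g_1 \in A_1$; analogously $-\rho_1 h$ extends to $g_2 \in A_2$. Then $\Psi(g_1,g_2) = (\rho_1 + \rho_2)h = h$. Here the scalar functions $\rho_k$ act on the section $h$ via the natural $C(X)$-module structure on sections of $\cA_\delta$, so $\rho_k h$ is well-defined as a continuous section.

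Having established the short exact sequence, I would apply the standard six-term exact sequence in $C^*$-algebraic $K$-theory and then invoke the definitional identifications $K_n(A) = K^n(X,\delta)$, $K_n(A_k) = K^n(U_k,\delta|_{U_k})$, $K_n(A_{12}) = K^n(U_1 \cap U_2,\delta|_{U_1 \cap U_2})$. The induced maps are by construction exactly $(i_1^*,i_2^*)$ and $j_1^* - j_2^*$, so the resulting cyclic six-term sequence coincides with the one asserted, with $\partial_0,\partial_1$ arising as the index/exponential boundary maps. The main obstacle is the surjectivity step just described: without the interior cover assumption one cannot produce the partition of unity that lets $\rho_k h$ extend continuously by zero across the boundary of $U_1 \cap U_2$ in $U_k$, and the sequence fails in general if $U_1, U_2$ merely cover $X$ as closed sets without an interior cover refinement.
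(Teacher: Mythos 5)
The paper does not prove this proposition at all: it is simply quoted as Theorem 2.7(f) of Pennig's paper \cite{Pennig}, and the statement is recorded here as a tool for later computations. So there is no in-paper proof to compare yours against.

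That said, your proposed argument is a correct and self-contained operator-algebraic proof. Writing $A = C_0(X,\cA_\delta)$, $A_k = C_0(U_k,\cA_\delta|_{U_k})$, $A_{12} = C_0(U_1\cap U_2, \cA_\delta|_{U_1\cap U_2})$ and forming the restriction/difference sequence $0 \to A \to A_1\oplus A_2 \to A_{12} \to 0$ is the standard C*-algebraic route to Mayer--Vietoris, and your exactness check is the real content. The one place where readers usually trip up is exactly the one you flag: surjectivity of the difference map needs the interior-cover hypothesis so that a scalar partition of unity subordinate to $\{U_1^\circ,U_2^\circ\}$ exists, and the crucial point that $\rho_2 h$ extends by zero to a continuous (and $C_0$) section of $\cA_\delta|_{U_1}$ is handled correctly via $\operatorname{supp}(\rho_2)\subseteq U_2^\circ$, which forces $\rho_2 h$ to vanish on $\overline{U_1\setminus U_2}\cap U_1\cap U_2$. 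Applying the six-term sequence in operator $K$-theory and unwinding the definitional identifications $K_n(C_0(-,\cA_\delta|_{-}))=K^n(-,\delta|_{-})$ then produces exactly the displayed diagram, with the restriction maps identified with $i_k^*$ and $j_k^*$. For what it is worth, Pennig's own treatment sets up higher twisted $K$-theory as a twisted cohomology theory built from bundles of spectra, so his Mayer--Vietoris sequence is derived from the axioms of that formalism rather than from a short exact sequence of section algebras; your argument is therefore genuinely more concrete and is the one most directly adapted to the computational techniques used later in this paper (e.g.\ the clutching-bundle computation for spheres). The only caveat you might add for completeness is a standing assumption guaranteeing the existence of the partition of unity (e.g.\ $X$ locally compact Hausdorff and paracompact, or $\sigma$-compact), since the proposition as stated does not record point-set hypotheses on $X$.
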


The following is an original proposition from \cite{Brook} relating higher twisted $K$-theory and higher twisted $K$-homology, which we will later use in computations.

\begin{proposition}{\label{K-homology}}
Assume that the algebra of sections vanishing at infinity of any algebra bundle with fibres isomorphic to $\OK$ over a locally compact space $X$ is contained in the bootstrap category of $C^*$-algebras defined in Definition 22.3.4 of \cite{BlackK}. If the higher twisted $K$-theory of $X$ is a direct sum of finite torsion groups, then the higher twisted $K$-theory and higher twisted $K$-homology of $X$ are isomorphic with a degree shift.
\end{proposition}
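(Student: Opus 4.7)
The plan is to invoke the Rosenberg--Schochet Universal Coefficient Theorem (UCT) for $KK$-theory. The bootstrap hypothesis on $A := C_0(X, \cA_{\delta})$ is exactly what is needed to apply the UCT with $\cO_{\infty}$ as the second variable, so the strategy is to express $K_n(X,\delta) = KK_n(A, \cO_{\infty})$ in terms of $K_*(A)$ and $K_*(\cO_{\infty})$, and then exploit the torsion hypothesis to collapse the resulting sequence to an isomorphism.

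First, I would write down the UCT as the $\ZZ/2$-graded natural short exact sequence
\[
0 \to \Ext^1_{\ZZ}\bigl(K_*(A),\, K_{*+1}(\cO_{\infty})\bigr) \to KK_*(A, \cO_{\infty}) \to \Hom_{\ZZ}\bigl(K_*(A),\, K_*(\cO_{\infty})\bigr) \to 0,
\]
and plug in the known values $K_0(\cO_{\infty}) = \ZZ$ and $K_1(\cO_{\infty}) = 0$. Reading off each $\ZZ/2$-degree, the $\Hom$ term in degree $n$ reduces to $\Hom_{\ZZ}(K_n(A), \ZZ)$, while the $\Ext^1$ term in degree $n$ reduces to $\Ext^1_{\ZZ}(K_{n+1}(A), \ZZ)$.

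Next, I would apply the hypothesis that each $K_n(A) = K^n(X,\delta)$ is a direct sum of finite torsion groups. For any such group $G$, one has $\Hom_{\ZZ}(G, \ZZ) = 0$ and $\Ext^1_{\ZZ}(G, \ZZ) \cong G$. The first identity collapses the UCT sequence to an isomorphism
\[
K_n(X,\delta) = KK_n(A, \cO_{\infty}) \cong \Ext^1_{\ZZ}\bigl(K_{n+1}(A), \ZZ\bigr),
\]
and the second identifies the right-hand side with $K_{n+1}(A) = K^{n+1}(X,\delta)$. The composition yields the claimed degree-shift isomorphism $K_n(X,\delta) \cong K^{n+1}(X,\delta)$ modulo $2$.

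The only genuine obstacle is verifying applicability of the UCT, which is exactly what the bootstrap assumption supplies; beyond that, the argument is a direct application of the UCT specialised to the torsion-valued case and to the target $\cO_{\infty}$. Some minor bookkeeping is required to pin down the $\ZZ/2$-grading conventions, and if the ``direct sum'' is allowed to be infinite one must check the isomorphism $\Ext^1_{\ZZ}(G,\ZZ) \cong G$ summand-by-summand, but no further ideas are needed beyond the UCT and the elementary homological algebra of $\Hom$ and $\Ext^1$ of finite abelian groups against $\ZZ$.
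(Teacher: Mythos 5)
Your proposal follows essentially the same route as the paper's proof: invoke the UCT short exact sequence from Blackadar with $A = C_0(X,\cA_\delta)$ and $B = \cO_\infty$, use $K_0(\cO_\infty)=\ZZ$, $K_1(\cO_\infty)=0$ to reduce the $\Hom$ and $\Ext$ terms, and then use the torsion hypothesis to kill the $\Hom$ term and identify the $\Ext^1$ term with $K^{n+1}(X,\delta)$. Your closing caveat about possibly infinite direct sums is sensible, and the paper sidesteps it by explicitly assuming a finite index set $\{m_{k,n}\}$.
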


\begin{proof}
The higher twisted $K$-theory and $K$-homology groups can be related by the universal coefficient theorem in $KK$-theory as in Theorem 23.1.1 of \cite{BlackK}, which states that
\[ 0 \to \Ext^1_{\ZZ}(K_*(A),K_*(B)) \to KK^*(A,B) \to \Hom_{\ZZ}(K_*(A),K_*(B)) \to 0 \]
is a short exact sequence whose first map has degree 0 and second map has degree 1 with respect to the grading, if $A$ and $B$ are seperable and $A$ is in the bootstrap category of $C^*$-algebras defined in Definition 22.3.4 of \cite{BlackK}. In order to obtain higher twisted $K$-homology as the $KK$-group in this sequence, we let $A$ be the space of sections of the algebra bundle representing the twist and let $B = \cO_{\infty}$. Then assuming that $A$ is in the bootstrap category, the short exact sequence becomes
\begin{align*}
0 \to \Ext^1_{\ZZ}(K^{n+1}(X,\delta),K_0(&\cO_{\infty}))\oplus \Ext^1_{\ZZ}(K^n(X,\delta),K_1(\cO_{\infty}) \to K_n(X,\delta) \\
\to \Hom_{\ZZ}(&K^{n}(X,\delta),K_0(\cO_{\infty}) \oplus \Hom_{\ZZ}(K^{n+1}(X,\delta), K_1(\cO_{\infty}) \to 0
\end{align*}
for each $n$. Using the $K$-theory of $\cO_{\infty}$, this reduces to
\[ 0 \to \Ext^1_{\ZZ}(K^{n+1}(X,\delta),\ZZ) \to K_n(X,\delta) \to \Hom_{\ZZ}(K^n(X,\delta),\ZZ) \to 0. \]
Now, if $K^{n}(X,\delta) \cong \bigoplus_{k} \ZZ_{m_{k,n}} $
for some finite sequence $\{m_{k,n}\}$ and $n=0, 1$ as per our assumption, then the $\Hom$ group will be trivial and the $\Ext$ group will become
\begin{align*}
\Ext^1_{\ZZ}\left(\bigoplus\limits_{k} \ZZ_{m_{k,n}},\ZZ\right) &= \bigoplus\limits_{k} \Ext^1_{\ZZ}(\ZZ_{m_{k,n}},\ZZ) \\
&= \bigoplus\limits_{k} \ZZ_{m_{k,n}} \\
&= K^n(X,\delta),
\end{align*}
since the $\Ext$ functor is additive in the first variable and $\Ext^1_{\ZZ}(\ZZ_m,G) \cong G / mG$ by properties in Section 3.1 of \cite{HatcherAT}. Therefore the short exact sequence provides an isomorphism $K^{n+1}(X,\delta) \cong K_n(X,\delta)$ as required.
\end{proof}

\begin{remark}\label{bootstrap}
We highlight the assumption made in the statement of Proposition \ref{K-homology}; that the algebra of sections vanishing at infinity of any algebra bundle with fibres isomorphic to $\OK$ over a locally compact space $X$ is contained in the bootstrap category of $C^*$-algebras defined in Definition 22.3.4 of \cite{BlackK}, and thus satisfies the universal coefficient theorem. This assumption is valid when $\OK$ is replaced by $\cK$, and it is true for both $C_0(X)$ for any locally compact Hausdorff space $X$ as well as for $\OK$ itself. In fact, it is conjectured that every seperable nuclear $C^*$-algebra satisfies the universal coefficient theorem in $KK$-theory. We note that this Proposition is only used in one computation; Theorem \ref{concerned} in the final section.
\end{remark}

\subsection{Links to cohomology}

In the classical case, twists of $K$-theory were not only viewed as algebra bundles; often this viewpoint was complemented using cohomology classes. Classical twists were often viewed as elements of $H^3(X,\ZZ)$, and it is precisely the Dixmier--Douady theory which provided a link between these cohomology classes and algebra bundles with fibre $\cK$. This raises the question as to whether there is any link between algebra bundles over a space $X$ with fibre $\OK$ and the cohomology of $X$, and the higher Dixmier--Douady theory posed by Pennig and Dadarlat in relation to strongly self-absorbing $C^*$-algebras is the key to understanding this. The following results are discussed in generality for all strongly self-absorbing $C^*$-algebras in Section 4 of \cite{DP1}, but we will specifically consider the use of $\cO_{\infty}$.

What we desire is a way to interpret the twists of $K$-theory, i.e.\ the elements of the first group of some generalised cohomology theory $E^1_{\cO_{\infty}}(X)$, in terms of the ordinary cohomology of $X$. This is precisely what a spectral sequence allows. As with any generalised cohomology theory, there is an Atiyah--Hirzebruch spectral sequence converging to ${E_{\cO_{\infty}}}^{\bullet}$, the coefficients of which are given in Theorem \ref{homotopy}. The $E_2$-term of this spectral sequence is as follows.

\begin{center}
\begin{tikzpicture}
  \matrix (m) [matrix of math nodes,
    nodes in empty cells,nodes={minimum width=2.3ex,
    minimum height=5ex,outer sep=-5pt},
    column sep=1ex,row sep=1ex]{
    				& &  0  &  1  &  2 & 3 & \\
		0    	& &	H^0(X,\ZZ_2)  & H^1(X,\ZZ_2) & H^2(X,\ZZ_2) & H^3(X,\ZZ_2) & \\
		-1     	& &	0 &  0  & 0 & 0 & \\
		-2    	& &	H^0(X,\ZZ)  & H^1(X,\ZZ) & H^2(X,\ZZ) & H^3(X,\ZZ)  &  \\
		-3    	& &	0 &  0  & 0 & 0 & \\
		-4    	& &	H^0(X,\ZZ)  & H^1(X,\ZZ) & H^2(X,\ZZ) & H^3(X,\ZZ)  &  \\};
\draw[thick] (m-1-1.south) -- (m-1-7.south) ;
\draw[thick] (m-1-2.north) -- (m-6-2.south) ;
\end{tikzpicture}
\end{center}

Note that we use different indexing conventions to distinguish between different Atiyah--Hirzebruch spectral sequences. For the spectral sequence for ${E_{\cO_{\infty}}}^{\bullet}$ we adopt the indexing used by Pennig and Dadarlat where negative indices are used, while for the spectral sequence for higher twisted $K$-theory developed in Section 4.1 we use standard indexing.

At this stage there are complications. The differentials in this sequence are unknown, and even if they were known there may be non-trivial extension problems in determining $E^1_{\cO_{\infty}}(X)$. At this point, Pennig and Dadarlat restrict to the setting in which $X$ has torsion-free cohomology, as this implies that the differentials of the sequence are necessarily zero as they are torsion operators, meaning that their image is torsion, as shown in Theorem 2.7 of \cite{Arlettaz}. It is then clear that there will be no extension problems, as there are no non-trivial extensions of free abelian groups and the only torsion will be in the final summand $H^1(X,\ZZ_2)$. Thus we obtain the following, noting that to apply the spectral sequence we must be working with a finite connected CW complex.

\begin{theorem}[Corollary 4.7(ii) \cite{DP1}]{\label{cohomology}}
Let $X$ be a finite connected CW complex such that the cohomology ring of $X$ is torsion-free. Then
\[ E^1_{\cO_{\infty}}(X) \cong Bun_X(\OK) \cong H^1(X,\ZZ_2) \oplus \bigoplus\limits_{k \geq 1} H^{2k+1}(X,\ZZ). \]
\end{theorem}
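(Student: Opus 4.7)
The plan is to deduce both isomorphisms from the Atiyah--Hirzebruch spectral sequence displayed above, together with the classifying-space description of twists. The first isomorphism $E^1_{\cO_{\infty}}(X) \cong Bun_X(\OK)$ is essentially immediate from Theorem \ref{Bigthm}: both sides are identified with $[X, B\Aut(\OK)]$, the former because $B\Aut(\OK)$ is by construction the first space in the spectrum defining $E^{\bullet}_{\cO_{\infty}}$, and the latter because principal $\Aut(\OK)$-bundles (equivalently algebra bundles with fibre $\OK$) are classified by $B\Aut(\OK)$.

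For the second isomorphism I would analyse the differentials on the $p+q=1$ diagonal of the displayed $E_2$ page. All odd rows of $E_2$ vanish, so the only potentially non-trivial differentials $d_r$ are those with $r$ odd; even $r$ would send an even row to an odd row. By Theorem~2.7 of \cite{Arlettaz}, every differential in an Atiyah--Hirzebruch spectral sequence for a generalised cohomology theory is a torsion operator, i.e.\ is annihilated by multiplication by some positive integer. On the diagonal of interest the non-zero entries are $H^1(X,\ZZ_2)$ at $(p,q)=(1,0)$ and $H^{2k+1}(X,\ZZ)$ at $(2k+1,-2k)$ for $k\geq 1$; an analogous pattern holds on the adjacent diagonals from which incoming differentials originate or to which outgoing differentials map. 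In every case one of two things occurs: either the source lies in the $q=0$ row and is therefore a $\ZZ_2$-module while the target is a subquotient of a torsion-free group, so that any homomorphism is automatically zero; or both source and target are torsion-free by the hypothesis on $H^*(X,\ZZ)$, so that a torsion operator between them vanishes. Hence every relevant differential is zero and $E_{\infty}^{p,1-p} \cong E_2^{p,1-p}$, and since $X$ is finite-dimensional the spectral sequence converges strongly.

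It remains to resolve the extension problem for the filtration on $E^1_{\cO_{\infty}}(X)$ whose associated graded is $H^1(X,\ZZ_2)$ together with $\bigoplus_{k\geq 1} H^{2k+1}(X,\ZZ)$. The sub-extensions involving only the torsion-free summands split automatically, since an extension of abelian groups whose quotient is free abelian always splits. The only delicate step is the outermost extension involving the two-torsion summand $H^1(X,\ZZ_2)$: since $\Ext^1(\ZZ_2,\ZZ)\neq 0$ this cannot be settled by homological input alone. I would resolve it by invoking the product decomposition $BGL_1(KU) \simeq K(\ZZ_2,1) \times K(\ZZ,3) \times BBSU$ recalled in the introduction, which yields a natural splitting
\[ [X, BGL_1(KU)] \cong H^1(X,\ZZ_2) \oplus [X, K(\ZZ,3)\times BBSU] \]
and so splits the $H^1(X,\ZZ_2)$ factor off the filtration internally.

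The step I expect to be the main obstacle is precisely this last one: the spectral sequence computes only the associated graded, and an external input from the homotopy type of the classifying space is needed to upgrade it to the honest direct sum asserted in the theorem. The rest of the argument is a routine check that the differentials vanish for degree reasons combined with Arlettaz's torsion theorem.
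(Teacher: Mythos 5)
Your argument is correct and follows essentially the same outline the paper sketches before stating the theorem: run the Atiyah--Hirzebruch spectral sequence for $E_{\cO_\infty}^\bullet$, invoke Arlettaz's theorem that the differentials are torsion operators, and use the torsion-free hypothesis to kill them. Your bookkeeping — even-$r$ differentials change the parity of $q$ and so vanish because the odd rows are zero; odd-$r$ differentials are torsion operators between torsion-free groups, or else have a $\ZZ_2$-module as source and a torsion-free group as target — is precisely what makes "the differentials vanish" rigorous, and the paper (which cites the result from \cite{DP1} rather than proving it) simply asserts this.

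Where you improve on the paper's prose is the extension problem. The paper dismisses it with "there are no non-trivial extensions of free abelian groups and the only torsion will be in the final summand $H^1(X,\ZZ_2)$", which is too quick: the filtration on $E^1_{\cO_\infty}(X)$ has the free part $\bigoplus_{k\geq 1} H^{2k+1}(X,\ZZ)$ as a subgroup with $H^1(X,\ZZ_2)$ as the top quotient, and an extension of a $\ZZ_2$-vector space by a free group certainly need not split (witness $0\to\ZZ\xrightarrow{\times 2}\ZZ\to\ZZ_2\to 0$). You correctly flag this as the real obstacle and resolve it by the product decomposition $BGL_1(KU)\simeq K(\ZZ_2,1)\times K(\ZZ,3)\times BBSU$, which gives a natural retraction of $E^1_{\cO_\infty}(X)\cong[X,BGL_1(KU)]$ onto the $H^1(X,\ZZ_2)$ factor. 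This is the right external input and it is consistent with the method of Dadarlat--Pennig. The only minor thing to be aware of is that this product decomposition of $BGL_1(KU)$ is itself a nontrivial theorem resting on \cite{MST}, so your "splitting" is not free — but the paper already invokes that splitting in the introduction, so you are entitled to use it. In short: correct proof, same mechanism, and noticeably more careful than the paper's own exposition at the extension step.
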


This shows that there is a relationship between the twists of $K$-theory and cohomology, at least in the restrictive case when $X$ has torsion-free cohomology. Even when the cohomology of $X$ has torsion, the spectral sequence argument shows that the twists will correspond to some subset of these odd-degree cohomology groups depending on differentials and extension problems. This also confirms that, in this case, the classical twists classified by $H^1(X,\ZZ_2) \oplus H^3(X,\ZZ)$ are indeed twists of $K$-theory from the perspective of homotopy theory, and provides insight into Pennig's chosen name -- ``higher'' twisted $K$-theory. The twists introduced by Pennig are higher in the sense that they can be represented by higher degree cohomology classes, as opposed to the classical degree 1 and 3 twists.

Note that henceforth, when we are in a setting in which Theorem \ref{cohomology} applies, we will identify the twists of $K$-theory over $X$ with the odd-degree integral cohomology classes of $X$. Given a twist we may view this as a cohomology class, and given a cohomology class this will represent a twist. This will be particularly important in the development of spectral sequences and in computations.

There are some slightly more general statements that can be made even if the cohomology of the base space has torsion. This will be the case if the torsion does not have any effect on the previous argument, i.e.\ if the relevant differentials are necessarily zero and there are no extension problems. Since we are only interested in the degree 1 group of this cohomology theory, only the groups whose row and column index sum to 1 are relevant, and so we only need to worry about the differentials entering and leaving these groups. If, for instance, only the odd cohomology groups of $X$ were torsion-free, the differentials between these relevant groups would all necessarily be zero, and we would be able to reach the same conclusion that twists correspond to odd-degree cohomology classes. Even these slightly relaxed assumptions allow for a wider class of spaces to be considered, including real projective spaces and lens spaces.

\subsection{Reformulation using Fredholm operators}

To conclude this section, we will present an alternative definition of the higher twisted $K$-theory groups which will prove useful in explicitly describing elements of these groups. We follow an approach of Rosenberg presented in \cite{RosenbergTwist} about classical twisted $K$-theory.

We will need to slightly shift our perspective from that of algebra bundles to that of principal bundles. Recall that there is a correspondence between algebra bundles over $X$ with fibre $\OK$ and principal $\Aut(\OK)$-bundles over $X$ displayed in the following diagram, since $\Aut(\OK)$ acts effectively on $\OK$ by automorphisms.
\[
\begin{tikzcd}
\Aut(\OK) \arrow[r] & \cE_{\delta} \arrow[d] \arrow[rr,bend left,dashed,"\text{Associated bundle}"] & &\cE_{\delta} \times_{\Aut(\OK)} (\OK) \arrow[d]\\
& X & & X \ar[ll, bend left,dashed,"\text{Transition functions}"]
\end{tikzcd}
\]
This is a bijective correspondence; moving from one perspective to the other and back again yields an isomorphic bundle, and therefore we may view either of the objects above as a twist of $K$-theory over $X$. Viewing a twist $\delta$ as a principal $\Aut(\OK)$-bundle as opposed to an algebra bundle, we define the $K$-theory of $X$ twisted by $\delta$ to be the operator algebraic $K$-theory of the continuous sections vanishing at infinity of the associated algebra bundle to agree with our previous definition.

We also need the notion of Fredholm operators on Hilbert $C^*$-modules to present this result. The theory of Hilbert modules and the operators which act on them is developed in Part III of \cite{WO}, including their relevance in $C^*$-algebraic $K$-theory. Key results relevant to higher twisted $K$-theory are summarised in Section 1.2.3 of \cite{Brook}. We will make use of the standard Hilbert $(\OK)$-module and the space of Fredholm operators on this module, denoted $\cH_{\OK}$ and $\Fred_{\OK}$ respectively. We are now able to state the main result of this section.

\begin{theorem}{\label{topological}}
Let $X$ be a compact Hausdorff space and $\cE_{\delta}$ a principal $\Aut(\OK)$-bundle over $X$ representing a twist $\delta$. There are natural identifications
\[ K^0(X,\delta) \cong [\cE_{\delta}, \Fred_{\OK}]^{\Aut(\OK)} \]
and
\[ K^1(X,\delta) \cong [\cE_{\delta}, \Omega \Fred_{\OK}]^{\Aut(\OK)} \]
where $[ \enspace, \enspace]^{\Aut(\OK)}$ denotes the unbased homotopy classes of $\Aut(\OK)$-equivariant maps, i.e.\ $\pi_0(C(\enspace,\enspace)^{\Aut(\OK)})$, and $\Omega \Fred_{\OK}$ is the based loop space of $\Fred_{\OK}$, i.e. the space of continuous maps $S^1 \to \Fred_{\OK}$ with $1 \in S^1$ mapped to $I \in \Fred_{\OK}$.
\end{theorem}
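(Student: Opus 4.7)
The plan is to adapt Rosenberg's identification for classical twisted $K$-theory from \cite{RosenbergTwist}, replacing the compact operators $\cK$ with $\OK$ throughout. The strategy is to translate between the operator-algebraic definition of $K^*(X,\delta)$ and an equivariant homotopy-theoretic model by exploiting the correspondence between the principal $\Aut(\OK)$-bundle $\cE_\delta$ and the associated algebra bundle $\cA_\delta = \cE_\delta \times_{\Aut(\OK)} \OK$.

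First I would set up the relevant identifications of section spaces. The associated-bundle construction yields
$$C_0(X, \cA_\delta) \;\cong\; C_0(\cE_\delta, \OK)^{\Aut(\OK)},$$
where the right-hand side consists of equivariant continuous functions vanishing at infinity. An analogous identification realises the standard Hilbert $C_0(X,\cA_\delta)$-module as sections vanishing at infinity of the associated bundle $\cE_\delta \times_{\Aut(\OK)} \cH_{\OK}$. This recasts the data defining $K^0(X,\delta)$ in $\Aut(\OK)$-equivariant terms on the total space $\cE_\delta$.

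Next I would construct the key bijection between $\Aut(\OK)$-equivariant maps $\varphi\colon \cE_\delta \to \Fred_{\OK}$ and Fredholm operators on the standard Hilbert $C_0(X, \cA_\delta)$-module. Given such a $\varphi$, the fibrewise formula $(e, v) \mapsto (e, \varphi(e) v)$ descends to an adjointable endomorphism of the associated Hilbert-module bundle; using local trivialisations of $\cE_\delta$ and a partition of unity on $X$, this endomorphism is Fredholm in the Mingo--Phillips sense, and conversely every such Fredholm operator assembles from a unique equivariant $\varphi$. The third step would then invoke the generalised Mingo--Phillips theorem that for a separable $C^*$-algebra $A$ one has $K_0(A) \cong \pi_0(\Fred(\cH_A))$ and $K_1(A) \cong \pi_0(\Omega \Fred(\cH_A))$, applied to $A = C_0(X, \cA_\delta)$. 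Since the bijection in the previous paragraph is continuous in the natural topologies, it descends to (unbased) homotopy classes and establishes the degree-zero isomorphism; the degree-one case follows by the same argument applied to based loops, where a based equivariant map $\cE_\delta \to \Omega \Fred_{\OK}$ corresponds to a loop of Fredholm operators on the Hilbert module.

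The main obstacle will be the second step: one must choose a topology on $\Fred_{\OK}$ (an analogue of Atiyah's topology, suitable for the Hilbert-module setting) that is simultaneously compatible with continuous parametrised families and with the operator-algebraic definition of Fredholmness on Hilbert $\OK$-modules. One must also verify that fibrewise Fredholmness assembles to a globally Fredholm endomorphism of the section module, which is a non-trivial continuity-and-compactness argument. Everything else is a reasonably routine adaptation of Rosenberg's original proof to the $\OK$ setting, relying on Theorem \ref{homotopy} to ensure that the resulting equivariant classifying space has the correct homotopy type, and on the well-developed machinery of Hilbert modules from \cite{WO}.
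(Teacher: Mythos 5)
Your proposal takes essentially the same approach as the paper: both adapt Rosenberg's argument by combining (i) the Mingo--Phillips identification $K_0(A)\cong\pi_0(\Fred(\cH_A))$, (ii) the standard lemma that sections of an associated bundle correspond to equivariant maps out of the principal bundle, and (iii) the resulting identification $\Fred_{C(X,\cA_\delta)}\cong C(\cE_\delta,\Fred_{\OK})^{\Aut(\OK)}$; the paper handles $K^1$ via the suspension $K_1(A)=K_0(SA)$ and unwinds this to based loops, which is the same content you invoke directly. One small point worth noting: you flag the identification in (iii) as the ``main obstacle'' requiring topological care, whereas the paper asserts it with little comment after applying the sections-as-equivariant-maps lemma; your caution is reasonable since the paper is a bit terse there, but the underlying argument is the same.
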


Note that $\Aut(\OK)$ acts on $\cE_{\delta}$ as the structure group of the principal bundle and acts on $\Fred_{\OK}$ via conjugation in the same way that $PU$ acts on $\Fred$, meaning that 
\begin{equation}\label{actionfred}
F \cdot T = (T^{-1})_{\cH} F T_{\cH}
\end{equation}
where $T \in \Aut(\OK)$, $F \in \Fred_{\OK}$ and we denote the map induced on the standard Hilbert $\OK$-module $\cH_{\OK}$ by applying $T$ pointwise by $T_{\cH}$. The action on $\Omega \Fred_{\OK}$ is defined to be this action at every point in the loop.

Before proceeding with the proof, we need a standard lemma about sections of principal bundles.

\begin{lemma}{\label{equiv}}
Let $\cE$ be a principal $G$-bundle over a topological space $X$ and suppose that $G$ has a continuous and effective left action on a topological space $F$. Then the space of sections of the associated fibre bundle $\cE \times_G F$ over $X$ can be identified with the space of $G$-equivariant maps $\cE \to F$.
\end{lemma}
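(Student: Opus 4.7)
The plan is to exhibit mutually inverse continuous maps between the space $\Gamma(\cE \times_G F)$ of sections and the space $\Map_G(\cE, F)$ of $G$-equivariant maps, using the fact that the principal bundle structure gives every fibre of $\cE \times_G F$ a distinguished parametrisation by $F$ once a point of the corresponding fibre of $\cE$ is chosen. Concretely, points of $\cE \times_G F$ are equivalence classes $[e,f]$ under the relation $(eg, f) \sim (e, gf)$ (for the right action on $\cE$ and left action on $F$), and the principal bundle axiom means that for fixed $e \in \pi^{-1}(x)$ every element of the fibre over $x$ has a unique representative of the form $(e, f)$.

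Given a section $s \colon X \to \cE \times_G F$, I would define $\tilde s \colon \cE \to F$ by letting $\tilde s(e)$ be the unique $f \in F$ such that $s(\pi(e)) = [e, f]$. Uniqueness of $f$ follows either from the freeness of the $G$-action on $\cE$ or from effectiveness of the action on $F$, and a direct computation using $(eg, f) \sim (e, gf)$ shows $\tilde s$ is $G$-equivariant. Conversely, for a $G$-equivariant map $\phi \colon \cE \to F$, I would set $s_\phi(x) := [e, \phi(e)]$ for any choice of $e \in \pi^{-1}(x)$; equivariance of $\phi$ makes this independent of the choice. The two constructions are visibly mutual inverses at the level of sets: starting from $s$, recovering $\phi = \tilde s$ and then forming $s_\phi$ returns $[e, \tilde s(e)] = s(\pi(e))$; starting from $\phi$, forming $s_\phi$ and then reading off the corresponding equivariant map returns $\phi$ itself.

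The remaining step is continuity of both $s \mapsto \tilde s$ and $\phi \mapsto s_\phi$, and this is where I expect the only genuine work. I would verify it locally by choosing a trivialising cover $\{U_\alpha\}$ with local sections $\sigma_\alpha \colon U_\alpha \to \cE$, under which $\cE \times_G F$ trivialises as $U_\alpha \times F$ via $[\sigma_\alpha(x), f] \leftrightarrow (x, f)$. In this local picture $\tilde s$ restricted to $\cE|_{U_\alpha}$ is given by $(x, g) \mapsto g^{-1} \cdot (\mathrm{pr}_F \circ s)(x)$, manifestly continuous, and similarly $s_\phi|_{U_\alpha}$ reads $x \mapsto (x, \phi(\sigma_\alpha(x)))$, also manifestly continuous. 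The main obstacle is therefore purely bookkeeping: aligning the left/right action conventions consistently across $\cE$, $F$, and the quotient, so that the equivariance condition one obtains matches the convention used in the ambient discussion (where $\Aut(\OK)$ acts on $\cE_\delta$ as the structure group and on $\Fred_{\OK}$ by conjugation).
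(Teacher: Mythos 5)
The paper does not actually prove this lemma; it only cites Rosenberg (``This is part of the proof of Proposition 1.3 in \cite{RosenbergTwist}''). Your proposal therefore supplies a proof where the paper delegates, and it is the correct standard argument: the two maps $s \mapsto \tilde s$ and $\phi \mapsto s_\phi$ are well defined, mutually inverse, and local trivialisations make the continuity of $\tilde s$ and of $s_\phi$ an immediate computation.

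Two small remarks. First, your parenthetical ``Uniqueness of $f$ follows either from the freeness of the $G$-action on $\cE$ or from effectiveness of the action on $F$'' is slightly off: what actually gives uniqueness is freeness of the right $G$-action on $\cE$ (which is automatic for a principal bundle). Effectiveness on $F$ alone would not; if some $g \neq \id$ fixed $e$, effectiveness would not force $g^{-1}f = f$. The effectiveness hypothesis in the lemma statement is in fact not needed for this particular result -- it is there because the ambient discussion uses it for the bijection between principal $\Aut(\OK)$-bundles and algebra bundles. Second, your equivariance bookkeeping is consistent: with the right action on $\cE$ and left action on $F$, the identification $(eg,f) \sim (e,gf)$ forces $\tilde s(eg) = g^{-1}\tilde s(e)$, which is exactly the $G$-equivariance convention the paper needs for the $\Aut(\OK)$-conjugation action on $\Fred_{\OK}$ in Theorem \ref{topological}.
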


This is part of the proof of Proposition 1.3 in \cite{RosenbergTwist}. We are now equipped to prove Theorem \ref{topological}.

\begin{proof}[Proof of Theorem \ref{topological}]
Recall that $K_0(C_0(X,\cA_{\delta}))$ can be identified with the group of path components of the Fredholm operators on the standard Hilbert $C_0(X,\cA_{\delta})$-module.
This means that
\begin{align*}
K^0(X,\delta) &= K_0(C(X,\cE_{\delta} \times_{\Aut(\OK)} (\OK))) \\
&= \pi_0(\Fred_{C(X,\cE_{\delta} \times_{\Aut(\OK)} (\OK))}),
\end{align*}
and applying Lemma \ref{equiv} allows us to replace the continuous sections of the algebra bundle associated to $\cE_{\delta}$ with the $\Aut(\OK)$-equivariant maps from $\cE_{\delta}$ to $\OK$.
This allows us to conclude that
\[ \Fred_{C(X,\cE_{\delta} \times_{\Aut(\OK)} (\OK))} = C(\cE_{\delta},\Fred_{\OK})^{\Aut(\OK)}, \]
and so
\begin{align*}
K^0(X,\delta) = \pi_0(C(\cE_{\delta},\Fred_{\OK})^{\Aut(\OK)}) = [\cE_{\delta},\Fred_{\OK}]^{\Aut(\OK)}
\end{align*}
as required. In order to obtain the result for $K^1$, we recall that $K_1(A) = K_0(SA)$ for a $C^*$-algebra $A$ where $SA$ denotes the suspension. In this case, we are interested in the $C^*$-algebra $SC(X,\cE_{\delta} \times_{\Aut(\OK)} (\OK))$, which can be viewed as
\[ \{f: S^1 \to C(X,\cE_{\delta} \times_{\Aut(\OK)} (\OK)) \mid \text{continuous}, f(1) = 0 \}. \]
We will suppress the continuity of the function and the fact that $f(1) = 0$ for brevity, but the same conditions are required to hold in the following sets where $0$ is taken to be the additive identity in each case. In the same way as above, we can view the Fredholm operators on the standard Hilbert $C^*$-module of this $C^*$-algebra as
\begin{align*}
&\{f: S^1 \to C(X,\cE_{\delta} \times_{\Aut(\OK)} \Fred_{\OK}) \} \\
= \enspace & \{f: S^1 \to C(\cE_{\delta}, \Fred_{\OK})^{\Aut(\OK)} \} \\
= \enspace & C(\cE_{\delta} \times S^1, \Fred_{\OK})^{\Aut(\OK)} \\
= \enspace & C(\cE_{\delta}, C(S^1, \Fred_{\OK}))^{\Aut(\OK)} \\
= \enspace & C(\cE_{\delta}, \Omega \Fred_{\OK})^{\Aut(\OK)}.
\end{align*}
Thus we may conclude that
\[ K^1(X,\delta) = \pi_0(C(\cE_{\delta}, \Omega \Fred_{\OK})^{\Aut(\OK)}) = [\cE_{\delta}, \Omega \Fred_{\OK}]^{\Aut(\OK)} \]
as required.
\end{proof}

In the case that principal $\Aut(\OK)$-bundles over a space $X$ can be explicitly described, this provides a useful way of expressing elements in the higher twisted $K$-theory groups of $X$. This formulation can also be extended to obtain expressions for the higher twisted $K$-theory groups of higher degree, where we see that
\[ K^n(X,\delta) = [\cE_{\delta}, \Omega^n \Fred_{\OK}]^{\Aut(\OK)}. \]
Although this may be a useful expression, the statement of the theorem already covers the important cases since Bott periodicity implies that everything will reduce to these two groups.

\section{Geometric constructions of higher twists}

Whilst knowing that the twists of $K$-theory over a space $X$ may be identified with algebra bundles over $X$ with fibres isomorphic to $\OK$ is useful in its own right, the method of proof does not provide us with an explicit construction of a bundle representing a homotopy-theoretic twist. In particular, since the definition of higher twisted $K$-theory involves the algebra of sections of such an algebra bundle, it is easier to compute higher twisted $K$-theory groups when there is an explicit bundle to work with. In the general case, even classifying the $\OK$ bundles over $X$ is a difficult task. In the case that twists can be identified with cohomology classes, however, associating an explicit bundle to each cohomology class will allow for simpler methods of computation. We will explore this in two special cases; when the base space is an odd-dimensional sphere and when the twisting class is decomposable.


\subsection{The clutching construction}

We begin by considering spheres; the clutching construction is the key to constructing these algebra bundles in this simple case. Recall that the clutching construction in general takes a fibre bundle over each hemisphere and glues them together using a gluing function from the equatorial sphere into the structure group of the fibre bundle. More general versions of the construction exist for other spaces which we will mention briefly, but it is particularly useful for the spheres because the gluing map can be viewed as an element of a homotopy group of the structure group, which provides a way of classifying these maps in cases that the homotopy type of the structure group is understood. It is also useful because the hemispheres are contractible, meaning that without loss of generality all fibre bundles over the hemispheres can be assumed trivialised. In particular, a principal $\Aut(\OK)$-bundle over $S^n$ may be constructed by specifying a map $f: S^{n-1} \to \Aut(\OK)$ which will glue trivial bundles over the upper and lower hemispheres $D^n_+$ and $D^n_-$ respectively. More precisely, we make the following definition.

\begin{definition}{\label{clutch}}
Let $f: S^{n-1} \to \Aut(\OK)$ be a continuous map. The \textit{clutching bundle} $\cE_f$ over $S^n$ associated to $f$ is defined to be the quotient of the disjoint union \[(D_+^n \times \Aut(\OK)) \amalg (D_-^n \times \Aut(\OK))\] under $(x,T) \sim (x,f(x) \circ T)$ for all $x \in S^{n-1}$ and $T \in \Aut(\OK)$.
\end{definition}

Note that technically this equivalence is between points $(x_+,T)$ and $(x_-, f(x) \circ T)$ where $x_+ \in D^{n}_+$ and $x_- \in D^{n}_-$ both represent the same point $x \in S^{n-1}$, but we will suppress these subscripts. We could equivalently have constructed an algebra bundle with fibres isomorphic to $\OK$ over $S^n$ by replacing $T \in \Aut(\OK)$ with $o \in \OK$, but the principal bundle construction will be more convenient due to Theorem \ref{topological} being stated in terms of principal bundles. The following lemma shows that this distinction is unimportant.

\begin{lemma}
The algebra bundle associated to a principal bundle constructed via the clutching construction is isomorphic to the algebra bundle constructed directly via the clutching construction with the same gluing map.
\end{lemma}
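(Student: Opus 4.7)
The plan is to construct an explicit bundle isomorphism by writing down local trivializations of the associated bundle over each hemisphere and computing the induced transition function, then verifying that this transition function agrees with the clutching relation used in the direct construction. Write $\cE_f \times_{\Aut(\OK)} \OK$ for the associated algebra bundle arising from Construction~1 (principal bundle first, then associated), and write $\cA_f$ for the algebra bundle obtained in Construction~2 by gluing $(D^n_+ \times \OK) \amalg (D^n_- \times \OK)$ via $(x,o) \sim (x, f(x)(o))$ along $S^{n-1}$. The right $\Aut(\OK)$-action on $\cE_f$ is $(x,T) \cdot S = (x, TS)$, and the quotient relation in the associated bundle is $[(x,T) \cdot S,\, o] = [(x,T),\, S(o)]$.

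Over each hemisphere, since the principal bundle is canonically trivial, we obtain a trivialization of the associated bundle,
\[ \phi_\pm : D^n_\pm \times \OK \longrightarrow (\cE_f \times_{\Aut(\OK)} \OK)\big|_{D^n_\pm}, \qquad \phi_\pm(x,o) = [(x,1_{\Aut(\OK)}),\, o]. \]
Each $\phi_\pm$ is fibrewise an algebra isomorphism because it identifies $\OK$ with the fibre via the unit of $\Aut(\OK)$, and the $\Aut(\OK)$-action on $\OK$ preserves the algebra structure. We then compute the transition function $\phi_-^{-1} \circ \phi_+$ over $x \in S^{n-1}$ by the chain of identifications
\[ \phi_+(x,o) = [(x,1)_+,\, o] = [(x,f(x))_-,\, o] = [(x,1)_- \cdot f(x),\, o] = [(x,1)_-,\, f(x)(o)] = \phi_-(x, f(x)(o)), \]
where the second equality uses the clutching relation in $\cE_f$ and the fourth uses the defining relation in the associated bundle. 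Hence the transition function of $\cE_f \times_{\Aut(\OK)} \OK$ with respect to $(\phi_+, \phi_-)$ is exactly $(x, o) \mapsto (x, f(x)(o))$, which is the gluing used in Construction~2.

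Consequently, the map $\Phi : \cA_f \to \cE_f \times_{\Aut(\OK)} \OK$ defined piecewise by $\Phi|_{D^n_\pm}(x,o) = \phi_\pm(x,o)$ is well-defined on the quotient, continuous, and a fibrewise algebra isomorphism; on each hemisphere it is a trivialization of an algebra bundle, and the transition calculation above ensures compatibility along $S^{n-1}$. Its inverse is constructed the same way using $\phi_\pm^{-1}$. The main point to watch is the left/right convention for the $\Aut(\OK)$-action and the direction in which $f(x)$ appears; once these are pinned down consistently as above, the transition function computation is a direct unravelling of the equivalence relations in the two constructions.
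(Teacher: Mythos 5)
Your proof is correct and is the natural unwinding-of-definitions argument that the paper signals by remarking that the lemma ``follows easily from definitions'' and deferring to the author's thesis: trivialize the associated bundle over each hemisphere via the canonical sections of $\cE_f$, compute that the resulting transition function over $S^{n-1}$ is $(x,o)\mapsto(x,f(x)(o))$, and note this is precisely the gluing used in the direct algebra-bundle construction. The chain of identifications $[(x,1)_+,o]=[(x,f(x))_-,o]=[(x,1)_-\cdot f(x),o]=[(x,1)_-,f(x)(o)]$ is exactly the right bookkeeping, and the piecewise-defined $\Phi$ is well-defined because of it.
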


The proof follows easily from definitions; see Lemma 3.1.4 of \cite{Brook}. An added benefit of the clutching construction is that there is a simple way to describe the sections of a clutching bundle using sections of the trivial bundles over the hemispheres. In particular, a section of a clutching bundle can be identified with sections of the trivial bundles which interact via the gluing map as follows. Note that we abbreviate $C(D^n_{+},\Aut(\OK)) \oplus C(D^n_-, \Aut(\OK))$ as $C(D^n_+ \amalg D^n_-,\Aut(\OK))$ for brevity.

\begin{lemma}\label{sections}
The space of sections of the clutching bundle over $S^n$ associated to the gluing map $f: S^{n-1} \to \Aut(\OK)$ is of the form
\[ C(S^n,\cE_f) = \{ (g,h) \in C(D^n_+ \amalg D^n_-,\Aut(\OK)) : g(x) = f(x) \cdot h(x) \text{ for all } x \in S^{n-1} \}. \]
\end{lemma}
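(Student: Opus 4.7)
The plan is to produce a bijection between sections of $\cE_f$ and pairs $(g,h)$ satisfying the stated compatibility, by restricting to each hemisphere and applying the fact that the restrictions $\cE_f|_{D^n_\pm}$ are canonically trivial principal $\Aut(\OK)$-bundles. Since $D^n_+$ and $D^n_-$ are closed subsets of $S^n$ whose interiors cover the complement of the equator and whose union is all of $S^n$, a continuous section of $\cE_f$ is the same as a pair of continuous sections over the hemispheres that agree on $S^{n-1}$ after passing to the quotient that defines $\cE_f$.

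First I would verify that $\cE_f|_{D^n_\pm}$ is canonically identified with the trivial bundle $D^n_\pm \times \Aut(\OK)$. This is immediate from Definition \ref{clutch}: the quotient map restricted to $D^n_+ \times \Aut(\OK)$ (respectively $D^n_- \times \Aut(\OK)$) is injective, as the equivalence relation only glues points over $S^{n-1}$ across the two copies. Under this identification, a section of $\cE_f|_{D^n_\pm}$ is exactly a continuous map $D^n_\pm \to \Aut(\OK)$, since sections of a trivial principal bundle are classified by maps into the structure group.

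Next I would translate the matching condition on the overlap. A section $\sigma$ of $\cE_f$ yields $g \in C(D^n_+,\Aut(\OK))$ and $h \in C(D^n_-,\Aut(\OK))$ such that for each $x \in S^{n-1}$, the classes of $(x, g(x))$ and $(x, h(x))$ in the quotient agree. By Definition \ref{clutch} (with the convention that $(x_-,T)\sim (x_+, f(x)\circ T)$, so that the statement of the lemma comes out as written), this equality of classes is precisely the equation $g(x) = f(x)\cdot h(x)$. Conversely, any pair $(g,h)$ satisfying this condition defines a well-defined map $S^n \to \cE_f$ by the universal property of the quotient, and continuity follows from the gluing lemma for closed covers: $g$ and $h$ define compatible continuous maps on the closed sets $D^n_+$ and $D^n_-$ whose union is $S^n$.

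I do not expect any serious obstacle; the content is essentially bookkeeping with the equivalence relation, and the only point requiring mild care is tracking the direction of the clutching identification to obtain $g(x)=f(x)\cdot h(x)$ rather than its inverse. The argument adapts verbatim if one prefers to view $\cE_f$ through its associated algebra bundle, which justifies the parenthetical remark preceding Lemma \ref{sections} that the principal and algebra viewpoints agree on the level of sections.
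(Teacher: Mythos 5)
Your proof is correct and follows the same route the paper takes (the paper defers the details to Lemma 3.1.2 of \cite{Brook}): identify $\cE_f|_{D^n_\pm}$ with trivial bundles via the two inclusions into the quotient, translate the matching condition on $S^{n-1}$ through the equivalence relation, and glue by the closed-cover gluing lemma. Your parenthetical about the direction of the identification is well spotted: as the note following Definition \ref{clutch} is literally written, the relation $(x_+,T) \sim (x_-,f(x) \circ T)$ would produce the matching condition $h(x) = f(x)\cdot g(x)$ rather than the stated $g(x) = f(x)\cdot h(x)$, so the convention must indeed be read as you propose for the lemma (and, consistently, the constraint $h_+(x) = f(x)\cdot h_-(x)$ used in the proof of Proposition \ref{spheres}) to come out as written.
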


The proof follows using the definition of the clutching bundle; see Lemma 3.1.2 of \cite{Brook} for more details. This result is particularly useful because the higher twisted $K$-theory groups are defined using the algebra of sections, and so having an explicit realisation of this algebra will allow computations to be performed more easily.

Now, we have an explicit construction of a bundle from a gluing map, but we want to be able to explicitly construct a bundle from a cohomology class of the sphere. To move towards this goal, we recall that any principal $\Aut(\OK)$-bundle over $S^n$ can be constructed in this way, and furthermore that the isomorphism class of the bundle depends only on the homotopy class of the gluing map.

\begin{proposition}{\label{correspondence}}
There is a bijective correspondence between the set of isomorphism classes of principal $\Aut(\OK)$-bundles over $S^n$ and $\pi_{n-1}(\Aut(\OK))$.
\end{proposition}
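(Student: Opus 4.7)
The plan is to define a map $\Phi: \pi_{n-1}(\Aut(\OK)) \to \Bun_{S^n}(\Aut(\OK))$ by sending a class $[f]$ to the isomorphism class of the clutching bundle $\cE_f$ of Definition \ref{clutch}, and to prove that $\Phi$ is a bijection.

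First, I would check that $\Phi$ is well-defined. Given a homotopy $F: S^{n-1} \times [0,1] \to \Aut(\OK)$ between gluing maps $f_0$ and $f_1$, applying the clutching construction to $F$ as a gluing map for $D^n_+ \times [0,1]$ and $D^n_- \times [0,1]$ produces a principal $\Aut(\OK)$-bundle over $S^n \times [0,1]$ whose restrictions to $S^n \times \{0\}$ and $S^n \times \{1\}$ are $\cE_{f_0}$ and $\cE_{f_1}$ respectively. Since $S^n$ is paracompact Hausdorff, the standard homotopy invariance of principal bundles then forces $\cE_{f_0} \cong \cE_{f_1}$.

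For surjectivity, I would begin with an arbitrary principal $\Aut(\OK)$-bundle $P \to S^n$ and use contractibility of the closed hemispheres $D^n_\pm$ together with the fact that principal bundles are pulled back from the classifying space to trivialise $P$ over each hemisphere via bundle maps $\phi_\pm: D^n_\pm \times \Aut(\OK) \to P|_{D^n_\pm}$. The composition $\phi_-^{-1} \circ \phi_+$ on the equator is an automorphism of the trivial principal bundle over $S^{n-1}$, which is given by left translation by some continuous map $f: S^{n-1} \to \Aut(\OK)$. Unpacking the definitions one then sees $P \cong \cE_f$. For injectivity, an isomorphism $\Psi: \cE_{f_0} \to \cE_{f_1}$ expressed in the canonical trivialisations over each hemisphere yields maps $\psi_\pm: D^n_\pm \to \Aut(\OK)$, and the gluing compatibility condition forces the relation $f_1(x) = \psi_+(x) f_0(x) \psi_-(x)^{-1}$ on the equator $S^{n-1}$. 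Since each restriction $\psi_\pm|_{S^{n-1}}$ extends continuously over the contractible disc $D^n_\pm$, it is nullhomotopic, and concatenating these nullhomotopies on each side produces a homotopy $f_0 \simeq f_1$.

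The hardest point to handle carefully will be base-point bookkeeping: since $\pi_0(\Aut(\OK)) = \ZZ_2$ by Theorem \ref{homotopy}, I must decide whether $\pi_{n-1}(\Aut(\OK))$ refers to based or free homotopy classes and adjust the argument accordingly. For $n \geq 2$ the sphere $S^{n-1}$ is connected so any gluing map lands in a single component, and by absorbing the value at a base point into a constant re-trivialisation over one hemisphere one reduces the problem to based homotopy classes; the nullhomotopies of $\psi_\pm|_{S^{n-1}}$ above can then be chosen to fix the base point because $\Aut(\OK)$ acts on itself by left translation. For $n = 1$ the correspondence specialises to identifying the two components of $\Aut(\OK)$ with the two isomorphism classes of principal $\Aut(\OK)$-bundles over $S^1$.
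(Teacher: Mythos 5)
Your proof takes the same route as the paper's (the clutching construction, adapting the classical vector-bundle argument from \cite{fiber}), and most of the work — well-definedness via homotopy invariance, surjectivity via hemisphere trivialisations, injectivity via the compatibility relation $f_1 = \psi_+ f_0 \psi_-^{-1}$ — is correct. You also correctly notice that the basepoint/disconnectedness issue is genuine here, since $\pi_0(\Aut(\OK)) \cong \ZZ_2$ by Theorem~\ref{homotopy}, and that it cannot simply be ignored as it can for the connected structure groups of the vector-bundle case.

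The gap is in how you dispose of that issue. After normalising $f_0(x_0) = f_1(x_0) = e$, the compatibility relation at $x_0$ forces $\psi_+(x_0) = \psi_-(x_0) =: c$, and the nullhomotopies of $\psi_\pm|_{S^{n-1}}$ rel $x_0$ (coming from contractibility of the discs rel basepoint) deform $\psi_\pm|_{S^{n-1}}$ to the constant map $c$, not to the constant $e$. Concatenating therefore produces only a based homotopy $f_1 \simeq c\,f_0\,c^{-1}$, and the remaining step — that conjugation by $c$ acts trivially on $\pi_{n-1}(\Aut(\OK))$, equivalently that the $\pi_1(B\Aut(\OK)) \cong \ZZ_2$ action on $\pi_n(B\Aut(\OK))$ is trivial so free and based homotopy classes $[S^n,B\Aut(\OK)]$ agree — is exactly what still needs an argument. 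Your phrase ``because $\Aut(\OK)$ acts on itself by left translation'' does not supply it: left translation by $c$ identifies $\pi_{n-1}(G,e)$ with $\pi_{n-1}(G,c)$ but says nothing about the conjugation automorphism. The missing input is that $\Aut(\OK)$ is an infinite loop space — $B\Aut(\OK)$ is the first space in a spectrum by Theorem~\ref{Bigthm} (Theorem 3.8 of \cite{DP1}) — hence homotopy commutative, so the commutator map is nullhomotopic and conjugation by any element is homotopic to the identity; equivalently, $B\Aut(\OK)$ is a simple space. Once this is cited, your argument closes.
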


The proof for vector bundles is given in Theorem 2.7 of \cite{fiber}, which is easily adapted to this case in Proposition 3.1.3 of \cite{Brook}.

The map defined by the bijective correspondence in Proposition \ref{correspondence} is an explicit realisation of the isomorphism induced by viewing $S^n$ as the suspension $\Sigma S^{n-1}$:
\begin{align*}
[S^n, B\Aut(\OK)] &= [\Sigma S^{n-1}, B\Aut(\OK)] \\
&= [S^{n-1}, \Omega B \Aut(\OK)] \\
&\cong [S^{n-1}, \Aut(\OK)].
\end{align*}

Now, since the cohomology groups of the spheres are torsion-free, we see via Theorem \ref{cohomology} that the twists of $K$-theory over the spheres are classified by their odd-degree cohomology groups, i.e.\ $H^{2n+1}(S^{2n+1},\ZZ) \cong \ZZ$ for $n\geq 1$. Finally, these correspondences
\[ H^{2n+1}(S^{2n+1},\ZZ) \cong [S^{2n+1},B \Aut(\OK)] \cong \pi_{2n}(\Aut(\OK)) \]
allow us to obtain explicit geometric representatives for twists over $S^{2n+1}$ given in terms of cohomology classes. Letting $[\delta_0] \in H^{2n+1}(S^{2n+1},\ZZ) \cong \ZZ$ denote a generator and taking any $N \in \ZZ$, we see that the bundle representing the twist $N[\delta_0]$ is constructed via a degree $N$ gluing map, i.e.\ $N$ times the generator of $\pi_{2n}(\Aut(\OK))$ corresponding to $[\delta_0]$ under the above identification. Using this result, we are able to explicitly compute the higher twisted $K$-theory of the odd-dimensional spheres directly from the definition rather than by using higher-powered machinery such as spectral sequences.

We also note that the construction can only produce trivial principal $\Aut(\OK)$-bundles over even-dimensional spheres, which is expected from Theorem \ref{cohomology}. This is because a bundle over $S^{2n}$ would come from a gluing map $S^{2n-1} \to \Aut(\OK)$, but the homotopy group $\pi_{2n-1}(\Aut(\OK))$ is trivial as stated in Theorem \ref{homotopy} and so every gluing map is homotopic to a constant. This means that any gluing map will construct a bundle which is isomorphic to the trivial bundle, which agrees with the fact that the odd-dimensional cohomology of $S^{2n}$ is trivial.

We will briefly make some more general remarks regarding the clutching construction. Given any cover of a space $X$ and a principal bundle over the disjoint union of the cover with certain isomorphism conditions imposed on points in the disjoint union which are identified to construct $X$, a principal bundle over $X$ can be constructed. For simplicity, we restrict our attention to spaces which can be covered by two sets as was the case with the sphere. For example, by viewing complex projective space $\CC P^n$ as the quotient of the disk $D^{2n}_+$ by the equivalence relation identifying antipodal points on the boundary, $\CC P^n$ can be covered by the image of a set containing a neighbourhood of the boundary of the disk under the projection map and a set which does not contain the boundary. The latter of these sets is contractible but the former is topologically more complicated, meaning that the principal bundles over this set would need to be better understood in order to construct any general principal bundle over $\CC P^n$. This method could still be used to construct some principal $\Aut(\OK)$-bundles over $\CC P^n$, even if it is not possible to construct all in this way. More simply, this construction can be used to construct principal $\Aut(\OK)$-bundles over products of spheres containing at least one odd-dimensional sphere where said sphere is split into two hemispheres as above, as we will observe in Section 6.2.

\subsection{Decomposable cohomology classes}

Another approach to constructing geometric representatives for twists of $K$-theory is to consider general spaces $X$ but to simplify the twisting cohomology class. One way to do this is to consider decomposable classes, because there already exist geometric representatives for some low-dimensional cohomology classes.

To begin with, let $X$ be a finite connected CW complex with torsion-free cohomology so that we are in the setting of Theorem \ref{cohomology}. Suppose that $\delta \in H^5(X,\ZZ)$ decomposes as the cup product $\delta = \alpha \cup \beta$ with $\alpha \in H^2(X,\ZZ)$ and $\beta \in H^3(X,\ZZ)$. By the standard identification $H^n(X,\ZZ) \cong [X, K(\ZZ,n)]$ where $K(G,n)$ denotes an Eilenberg--Mac Lane space, i.e.\ a space whose only non-trivial homotopy group is $G$ in degree $n$, and the fact that there exist simple geometric models for $K(\ZZ,n)$ in the case that $n=1, 2, 3$, we identify $\delta$ with $H_5: X \to K(\ZZ,5)$ such that $H_5 = H_2 \wedge H_3$ with $H_2: X \to BU(1)$ and $H_3: X \to BPU$. Then $H_2$ determines a principal $U(1)$-bundle
\[
\begin{tikzcd}
U(1) \arrow[r] &P_{H_2} \arrow[d,"\pi_{H_2}"] \\
&X
\end{tikzcd}
\]
with Chern class $\alpha$, and similarly $H_3$ determines a principal $PU$-bundle
\[
\begin{tikzcd}
PU \arrow[r] &Q_{H_3} \arrow[d,"\pi_{H_3}"] \\
&X
\end{tikzcd}
\]
with Dixmier--Douady invariant $\beta$. We form the fibred product bundle over $X$, whose total space is $P_{H_2} \times_X Q_{H_3} = \{(p,q) \in P_{H_2} \times Q_{H_3} : \pi_{H_2}(p) = \pi_{H_3}(q) \},$ and this gives us a principal $U(1) \times PU$-bundle
\[
\begin{tikzcd}
U(1) \times PU \arrow[r] &P_{H_2} \times_X Q_{H_3} \arrow[d,"\pi"] \\
&X
\end{tikzcd}
\]
of which $\alpha \cup \beta$ will be an invariant. This is because the fibred product of principal bundles is the pullback of the direct product of principal bundles under the diagonal map, and the cup product in cohomology is the pullback of the external product under the diagonal map. The direct product of principal bundles corresponds to the external product of cohomology classes, so the fibred product of principal bundles corresponds to the cup product in cohomology.

Now, to this principal bundle we wish to associate a principal $\Aut(\OK)$-bundle over $X$, to obtain a twist of $K$-theory. This is done by defining an injective group homomorphism from the structure group $U(1) \times PU$ into the automorphism group $\Aut(\OK)$, or equivalently an effective action of the structure group $U(1) \times PU$ on the algebra $\OK$. Since $PU$ is isomorphic to the automorphism group of $\cK$ by conjugation, we have the obvious action $PU \xrightarrow{\cong} \Aut(\cK)$. We seek an effective action of $U(1)$ on $\cO_{\infty}$.


As noted in Section 3 of \cite{KishimotoCross}, there is a one-parameter automorphism group of $\cO_{\infty}$ obtained by scaling the generators as follows. Letting $\lambda_k$ for $k = 1, 2, \cdots$ be a sequence of real constants, we obtain a map $\gamma: \RR \to \Aut(\cO_{\infty})$ defined by $\gamma_t(S_k) = e^{i \lambda_k t} S_k$ for $k = 1, 2, \cdots$ where the $S_k$ are the generators in the definition of the Cuntz algebra $\cO_{\infty}$. Then taking $\lambda_k = 2k\pi$ we see that $\gamma$ is periodic in $t$ with a period of 1. In fact, this is a special case of the action that we described in Theorem \ref{action}, and thus we can view it as a map $\gamma: U(1) \to \Out(\cO_{\infty})$, yielding the desired action.

Out of our maps $U(1) \to \Aut(\cO_{\infty})$ and $PU \to \Aut(\cK)$, we obtain the product map $U(1) \times PU \to \Aut(\cO_{\infty}) \times \Aut(\cK)$. Then by Corollary T.5.19 of \cite{WO} we see that the tensor product of two automorphisms of $C^*$-algebras is an automorphism of the tensor product algebra, so $\Aut(\cO_{\infty}) \times \Aut(\cK) \subset \Aut(\OK)$. Finally, since the map $U(1) \to \Aut(\cO_{\infty})$ is given by scaling generators whereas $PU = U(\cH) / U(1)$ acts by conjugation on $\cK$, there is no non-trivial action of the $U(1)$ factor on the $\cK$ component or of the $PU$ factor on the $\cO_{\infty}$ component and hence the map $U(1) \times PU \to \Aut(\OK)$ that we have constructed is injective.

Thus we may form the associated bundle $(P_{H_2} \times_X Q_{H_3}) \times_{U(1) \times PU} \Aut(\OK)$, which is a principal $\Aut(\OK)$-bundle over $X$. As the 5-class $\alpha \cup \beta$ is an invariant of the principal $U(1) \times PU$-bundle, this is a prime candidate for the principal $\Aut(\OK)$-bundle over $X$ which corresponds to the class $\alpha \cup \beta$ under the isomorphism of Dadarlat and Pennig. Due to the inexplicit nature of the isomorphism, however, it is not immediate that this will indeed be the correct bundle. In order to get around this issue, we note that Pennig and Dadarlat use an Atiyah--Hirzebruch spectral sequence to determine ${E_{\cO_{\infty}}}^1(X) = [X,B\Aut(\OK)]$ in terms of cohomology. Following their argument, the same spectral sequence yields the standard isomorphism $[X,K(\ZZ,5)] \cong H^5(X,\ZZ)$. This allows us to conclude that the diagram
\begin{equation}\label{diagram}
\begin{tikzcd}
\left[X, K(\ZZ,5)\right] \ar[hookrightarrow]{d} \ar{r}{\cong} & H^5(X,\ZZ) \ar[hookrightarrow]{d} \\
\left[X,B\Aut(\OK)\right] \ar{r}{\cong} &H^1(X,\ZZ_2) \oplus \bigoplus\limits_{k \geq 1} H^{2k+1}(X,\ZZ)
\end{tikzcd}
\end{equation}
commutes, where we are viewing the elements of $[X,K(\ZZ,5)]$ as principal $U(1) \times PU$-bundles over $X$ obtained using the fibred product construction so the left vertical map takes such a bundle to the associated $\Aut(\OK)$-bundle via our injective group homomorphism.
This allows us to conclude that the principal bundle $(P_{H_2} \times_X Q_{H_3}) \times_{U(1) \times PU} \Aut(\OK)$ truly does correspond to the 5-class $\alpha \cup \beta$ under the isomorphism of Dadarlat and Pennig.

We now extend this argument to the case in which $\delta$ is a general element of the cup product of $H^2(X,\ZZ)$ and $H^3(X,\ZZ)$, i.e.\ $\delta$ is given by a sum of $N$ decomposable classes of the form considered above. We take $\alpha \in H^2(X,\ZZ^N)$ and $\beta \in H^3(X,\ZZ^N)$ such that $\delta = \inner{\alpha}{\beta}$ where $\inner{\cdot}{\cdot}$ is the pairing $H^2(X,\ZZ^N) \times H^3(X,\ZZ^N) \to H^5(X,\ZZ)$ given by the cup product and the standard inner product $\ZZ^N \times \ZZ^N \to \ZZ$. As above, we identify $\alpha$ with a map $H_2: X \to BU(1)^N$ and $\beta$ with a map $H_3: X \to BPU^N$, and form the principal torus bundle
\[
\begin{tikzcd}
U(1)^N \arrow[r] &P_{H_2} \arrow[d,"\pi_{H_2}"] \\
&X
\end{tikzcd}
\]
with Chern class $\alpha$ and the principal $PU^N$-bundle
\[
\begin{tikzcd}
PU^N \arrow[r] &Q_{H_3} \arrow[d,"\pi_{H_3}"] \\
&X
\end{tikzcd}
\]
with Dixmier--Douady invariant $\beta$. Once again we take the fibred product to obtain the principal $U(1)^N \times PU^N$-bundle $P_{H_2} \times_X Q_{H_3}$ over $X$
with invariant $\delta$. We now require an injective map $U(1)^N \times PU^N \to \Aut(\OK)$ to adapt the previous argument and construct the associated bundle.

Again applying Corollary T.5.19 of \cite{WO} and using the fact that $\cO_{\infty}$ is nuclear, we see that $\Aut(\cO_{\infty})^N \subset \Aut(\cO_{\infty}^{\otimes N}) = \Aut(\cO_{\infty})$. We may then combine two copies of the action $\gamma, \gamma': U(1) \to \Aut(\cO_{\infty})$ described previously to form an injective map $U(1)^2 \to \Aut(\cO_{\infty}^{\otimes 2})$ given by $(\gamma \otimes \gamma')_{(t,t')}(S_k \otimes S_{k'}) = e^{2\pi i(kt + k't')} S_k \otimes S_{k'}$. This argument can be extended to $N$ copies of the action in the same way, giving an injective map $U(1)^N \to \Aut(\cO_{\infty})$. More simply, the map $PU^N \to \Aut(\cK)$ is injective because the automorphisms do not involve scaling by constants. Thus we obtain our desired injective group homomorphism $U(1)^N \times PU^N \to \Aut(\OK)$.

We then construct the associated bundle $(P_{H_2} \times_X Q_{H_3}) \times_{U(1)^N \times PU^N} \Aut(\OK)$ over $X$, at which point the commutative diagram (\ref{diagram}) again allows us to conclude that this principal bundle does correspond to $\delta$ under the isomorphism. We have proved the following.

\begin{theorem}\label{decomposable_theorem}
Let $X$ be a finite connected CW complex with torsion-free cohomology, and take $\alpha \in H^2(X,\ZZ^N)$ and $\beta \in H^3(X,\ZZ^N)$ with $\delta = \inner{\alpha}{\beta}$. Denote by $P_{\alpha}$ the total space of the principal $U(1)$-bundle with Chern class $\alpha$ and by $Q_{\beta}$ the total space of the principal $PU$-bundle with Dixmier--Douady invariant $\beta$. Then $(P_{\alpha} \times_X Q_{\beta}) \times_{U(1)^N \times PU^N} \Aut(\OK)$ is a principal $\Aut(\OK)$-bundle over $X$ which corresponds to $\delta$ under the isomorphism of Dadarlat and Pennig.
\end{theorem}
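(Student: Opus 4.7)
The plan is to mimic the argument already developed for a single decomposable class and lift it to the general case by replacing single factors with $N$-fold products. First I would translate the cohomological data into bundle-theoretic data: identify $\alpha \in H^2(X,\ZZ^N) \cong [X, BU(1)^N]$ and $\beta \in H^3(X,\ZZ^N) \cong [X, BPU^N]$ with their classifying maps, producing a principal $U(1)^N$-bundle $P_\alpha$ with Chern class $\alpha$ and a principal $PU^N$-bundle $Q_\beta$ with Dixmier--Douady invariant $\beta$. The fibred product $P_\alpha \times_X Q_\beta$ is then a principal $U(1)^N \times PU^N$-bundle over $X$; since the fibred product of principal bundles is the pullback under the diagonal of the direct product and the direct product corresponds to the external product in cohomology, its invariant is the cup product of $\alpha$ and $\beta$ paired via the standard inner product on $\ZZ^N$, which is exactly $\delta = \inner{\alpha}{\beta}$.

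Next I would construct an injective continuous group homomorphism $\rho: U(1)^N \times PU^N \to \Aut(\OK)$. For the $PU^N$-factor, use conjugation to map into $\Aut(\cK)^N$ and then into $\Aut(\cK^{\otimes N}) \cong \Aut(\cK)$ via Corollary T.5.19 of \cite{WO}. For the $U(1)^N$-factor, combine $N$ copies of the scaling action $\gamma: U(1) \to \Aut(\cO_\infty)$ of Theorem \ref{action} with $\lambda_k = 2\pi k$ using tensor products together with nuclearity of $\cO_\infty$ and the self-absorbing identification $\cO_\infty^{\otimes N} \cong \cO_\infty$; injectivity follows because distinct $N$-tuples of phases scale generators by distinguishable factors. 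Combining the two factors, one lands in $\Aut(\OK)$, and injectivity is preserved since the $U(1)^N$-factor acts trivially on $\cK$ while the $PU^N$-factor acts trivially on $\cO_\infty$. The associated bundle $\cE = (P_\alpha \times_X Q_\beta) \times_{U(1)^N \times PU^N} \Aut(\OK)$ is then a principal $\Aut(\OK)$-bundle over $X$.

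The main obstacle is showing that $\cE$ corresponds to the original class $\delta$ under the inexplicit Dadarlat--Pennig isomorphism. I would handle this by spectral sequence naturality: the same Atiyah--Hirzebruch argument that gives the isomorphism $[X, B\Aut(\OK)] \cong H^1(X,\ZZ_2) \oplus \bigoplus_{k\geq 1} H^{2k+1}(X,\ZZ)$ also yields the standard isomorphism $[X, K(\ZZ,5)] \cong H^5(X,\ZZ)$, and the classifying map $B\rho$ composed with the natural map $K(\ZZ,5) \to B\Aut(\OK)$ induced by $\rho$ produces a morphism of AHSS. Checking that this morphism is the identity on the $H^5(X,\ZZ)$-summand of the $E_2$-page establishes commutativity of diagram (\ref{diagram}). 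Because our construction is, by design, the composition of the cup-product map $H^2(X,\ZZ^N) \otimes H^3(X,\ZZ^N) \to H^5(X,\ZZ)$ with the left vertical arrow of that diagram, a standard diagram chase will then show that $\cE$ corresponds exactly to $\delta$, completing the argument.
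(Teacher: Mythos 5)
Your proposal follows essentially the same route as the paper: classify $\alpha,\beta$ by maps to $BU(1)^N$ and $BPU^N$, take the fibred product (identified with the cup product via pullback along the diagonal), build the injective homomorphism $U(1)^N \times PU^N \to \Aut(\OK)$ from the generator-scaling action on $\cO_\infty^{\otimes N}\cong\cO_\infty$ and conjugation on $\cK$, and conclude via the commutativity of diagram (\ref{diagram}) established by comparing Atiyah--Hirzebruch spectral sequence computations of $[X,K(\ZZ,5)]$ and $[X,B\Aut(\OK)]$. (One phrasing infelicity: there is no ``natural map $K(\ZZ,5)\to B\Aut(\OK)$ induced by $\rho$'' since $\rho$ has domain $U(1)^N\times PU^N$, not a delooping of $K(\ZZ,5)$; but your actual plan of comparing $E_2$-pages under the construction matches the paper's argument.)
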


A natural extension of this result would be to consider decomposable classes in higher degrees. For example, we could take $\alpha_1 \cup \alpha_2 \cup \beta \in H^2(X,\ZZ) \cup H^2(X,\ZZ) \cup H^3(X,\ZZ)$ and aim to construct a bundle in much the same way. One could follow the same constructions as above in order to do so, and would obtain an extension of the result.

There are, however, limitations to this approach. It is difficult to determine whether two bundles constructed in this way are isomorphic, and in particular it is even difficult to tell whether a bundle constructed in this way is trivial. For instance, taking a space such as $S^2 \times S^1$ which has non-trivial second- and third-degree integral cohomology, this construction can be used to form a principal $\Aut(\OK)$-bundle over $S^1 \times S^2$. It is not obvious from the construction, however, whether this bundle would correspond to a twist in $H^3(S^1 \times S^2, \ZZ) \cong \ZZ$ or whether it would correspond to an integral 5-class on $S^1 \times S^2$, all of which are trivial.

This motivates a great deal of future research in constructing geometric representatives for twists. Whilst Theorem \ref{decomposable_theorem} provides geometric representatives for a specific class of decomposable twists, the majority of twists cannot be decomposed into pieces as simple as these. Thus it would be desirable to obtain a more general theorem which is applicable to a wider class of decomposable twists, but it is difficult to obtain geometric representatives for higher degree cohomology classes. If models for higher Eilenberg--Mac Lane spaces could be obtained, and injective group homomorphisms from these into $\Aut(\OK)$ could be determined, then the methods used would directly generalise to provide geometric representatives in higher degrees. Alternatively, if results in representing cohomology classes using maps into the stable unitary group could be obtained in some special cases, then the action by outer automorphisms given in Theorem \ref{action} could be used to extend the results presented here. The sections of the constructed bundle can also be explored to determine whether there is any relationship between the sections of the bundle and the sections of the two bundles used in the construction, which would aid in computations.

In general, the problem of associating geometric representatives to cohomology classes is very difficult. A large amount of work has been done on this for classical twists, for instance Brylinski uses the theory of loop groups and transgression of cohomology classes to obtain geometric representatives \cite{Brylinski} and Bouwknegt, Carey, Mathai, Murray and Stevenson produce bundle gerbes representing twists \cite{BCMMS}, but it is not apparent how this work can be carried over to the higher twisted setting. Further research in this area following these ideas may yield more general results.

\section{Spectral sequences}

We develop spectral sequences for higher twisted $K$-theory which will improve our ability to perform computations in the final section. The arguments to obtain the existence of both the Atiyah--Hirzebruch and the Segal spectral sequences for generalised cohomology theories are standard in the literature, for instance in \cite{Eilenberg} and \cite{Segal} respectively. More interesting are results regarding the differentials in these sequences which are unique to higher twisted $K$-theory, and which we will present here. Owing to the difficulty in determining the differentials in these spectral sequences, these results are fairly specialised, but will still be useful in computations.


\subsection{Atiyah--Hirzebruch spectral sequence}

The existence of the Atiyah--Hirzebruch spectral sequence for higher twisted $K$-theory can be established using a filtration of the $K$-theory group determined by the skeletal filtration of the space, and following the standard argument presented in Chapter XV of \cite{Eilenberg}. The full argument is given in Section 4.2.1 of \cite{Brook} and culminates in the following.

\begin{theorem}\label{AHstatement}
Let $X$ be a CW complex with $\delta$ a twist over $X$. There exists an Atiyah--Hirzebruch spectral sequence converging strongly to $K^*(X,\delta)$ with $E^{p,q}_2 = H^p(X,K^q(x_0))$.
\end{theorem}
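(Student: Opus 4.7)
The plan is to build the spectral sequence from the skeletal filtration $\emptyset = X^{-1} \subset X^0 \subset X^1 \subset \cdots \subset X$ in the standard manner of Eilenberg, treating higher twisted $K$-theory as a cohomology theory on the category of pairs with a prescribed twist. For each $p$, the long exact sequence of the pair $(X^p, X^{p-1})$ in higher twisted $K$-theory (available via the relative theory mentioned in Pennig's Theorem 2.7) assembles into an exact couple with $D_1^{p,q} = K^{p+q}(X^p, \delta|_{X^p})$ and $E_1^{p,q} = K^{p+q}(X^p, X^{p-1}, \delta|_{X^p})$. Turning the crank on this exact couple produces a spectral sequence in the usual way; what remains is to identify the $E_2$ page and to verify strong convergence to $K^*(X,\delta)$.

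For the $E_1$ identification, the crucial observation is that the twist restricted to each $p$-cell is trivial: a $p$-cell is contractible, so any principal $\Aut(\OK)$-bundle over it is trivial, which by the correspondence in Section 2.2 means the algebra bundle there is trivial too. Consequently, by excision applied to $(X^p, X^{p-1})$, one can compute
\begin{equation*}
K^{p+q}(X^p, X^{p-1}, \delta|_{X^p}) \cong \bigoplus_{\text{$p$-cells } \sigma} K^{p+q}(D^p,\partial D^p) \cong C^p_{\mathrm{cell}}(X, K^q(\mathrm{pt})),
\end{equation*}
since on each cell the higher twisted $K$-theory reduces to ordinary topological $K$-theory. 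I would then identify the $d_1$ differential with the cellular coboundary by chasing through the connecting maps of the long exact sequences of the triples $(X^{p+1}, X^p, X^{p-1})$ exactly as in the untwisted case; this is a formal diagram chase once the local triviality of $\delta$ is established. The $E_2$-page then reads $E_2^{p,q} = H^p(X, K^q(\mathrm{pt}))$ as desired, and Bott periodicity makes it $2$-periodic in $q$.

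For convergence, when $X$ is a finite CW complex the filtration is finite and the spectral sequence is automatically strongly convergent. For general CW complexes one needs a limit argument: one shows that the natural map $K^*(X,\delta) \to \varprojlim_p K^*(X^p, \delta|_{X^p})$ has controlled kernel and cokernel (a $\lim^1$ term), and that the filtration on $K^*(X,\delta)$ induced by the inclusions $X^p \hookrightarrow X$ is Hausdorff and complete. This is routine once one has the appropriate Milnor-type $\lim^1$ sequence in higher twisted $K$-theory, which follows from its representability as homotopy classes of maps into a spectrum (as in Pennig's formulation via bundles of spectra).

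I expect the main obstacle to be the careful handling of the twist on the relative groups: one must ensure that excision is actually available in higher twisted $K$-theory in the required form, and that the identification of $K^{p+q}(X^p, X^{p-1}, \delta|_{X^p})$ with cellular cochains into untwisted $K^q(\mathrm{pt})$ is canonical enough that the $d_1$ really does agree with the cellular differential. Everything else is essentially the standard construction transplanted into the twisted setting, and the paper indicates this is executed in detail in Section 4.2.1 of \cite{Brook}.
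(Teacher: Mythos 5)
Your proposal follows precisely the route the paper indicates: a skeletal-filtration exact couple in the style of Chapter XV of Cartan--Eilenberg, with the $E_1$-page identified via excision and the local triviality of the twist over each contractible $p$-cell, the $d_1$ identified with the cellular coboundary by a diagram chase through the triples $(X^{p+1},X^p,X^{p-1})$, and convergence handled by finiteness or a Milnor $\lim^1$ argument. The paper itself only sketches this and defers the details to Section 4.2.1 of the thesis reference \cite{Brook}, so your reconstruction is essentially the same argument.
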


Note that this spectral sequence is of the exact same form as the Atiyah--Hirzebruch spectral sequence for topological $K$-theory first developed in \cite{AHstart} and that for twisted $K$-theory constructed both by Rosenberg \cite{RosenbergTwist} and by Atiyah and Segal \cite{AS2}. All of these sequences have the same $E_2$-term, but the difference lies in the differentials. In the untwisted case it was found that the first nontrivial differential was given by the Steenrod operation $Sq^3: H^p(X,\ZZ) \to H^{p+3}(X,\ZZ)$, and upon extending to the classical twisted setting the first nontrivial differential became the Steenrod operation twisted by the class $\delta \in H^3(X,\ZZ)$, i.e.\ the differential is expressed by $Sq^3 - (-) \cup \delta: H^p(X,\ZZ) \to H^{p+3}(X,\ZZ)$. We obtain an analogous result in this setting, where we are now forced to restrict to the case that the twist $\delta$ can be represented by a cohomology class. We also lose some information in passing to the higher twisted setting, as the higher differentials of even the Atiyah--Hirzebruch spectral sequence for topological $K$-theory are not well-understood.

\begin{theorem}{\label{HTSS}}
In the setting of the Atiyah--Hirzebruch spectral sequence, if a twist $\delta$ can be represented by $\delta \in H^{2n+1}(X,\ZZ)$ then the $d_{2n+1}$ differential is the differential $d_{2n+1}'$ in the spectral sequence for topological $K$-theory twisted by $\delta$, i.e.\ $d_{2n+1}: H^p(X,\ZZ) \to H^{p+2n+1}(X,\ZZ)$ is given by $d_{2n+1}(x) = d_{2n+1}'(x) - x \cup \delta$.
\end{theorem}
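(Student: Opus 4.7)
The plan is to mimic Atiyah and Segal's argument for the classical twisted case \cite{AS2}, combining a module structure of the twisted Atiyah--Hirzebruch spectral sequence over the ordinary one with naturality from a universal Eilenberg--Mac Lane space. The point is that a Leibniz rule will reduce the computation of $d_{2n+1}$ to the single computation of $d_{2n+1}(1)$ on the class $1 \in E_2^{0,0} = H^0(X,\ZZ)$, which can then be identified with $-\delta$ via a universal argument.

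First I would establish the module structure. Pointwise multiplication of a $\CC$-valued continuous function by a section of the algebra bundle $\cA_{\delta}$ yields a pairing $K^*(X) \otimes K^*(X,\delta) \to K^*(X,\delta)$, and since both spectral sequences are constructed from the skeletal filtration of $X$, this pairing respects the filtrations and induces on $E_r$-pages a module structure of the twisted sequence over the untwisted sequence whose differentials satisfy the Leibniz rule
\[ d_r(y \cdot x) = d_r'(y) \cdot x + (-1)^{|y|} y \cdot d_r(x) \]
for $y$ in the untwisted sequence and $x$ in the twisted one.

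Second, I would use naturality to kill the lower differentials on $1$. The twist $\delta$ is classified by a map $f_{\delta}: X \to K(\ZZ,2n+1)$ with $\delta = f_{\delta}^* \iota$ for the fundamental class $\iota$, and the whole AHSS is natural under such pullbacks. Since $H^r(K(\ZZ,2n+1),\ZZ) = 0$ for $0 < r < 2n+1$, the universal $d_r(1)$ lives in a trivial group in that range, so $d_r(1) = 0$ on $X$ for all $r < 2n+1$ and $1$ survives to $E_{2n+1}^{0,0}$. Applying the Leibniz rule with $x = 1$ gives, for any $y \in E_2^{p,q}$,
\[ d_{2n+1}(y) = d_{2n+1}(y \cdot 1) = d_{2n+1}'(y) + (-1)^{|y|} y \cdot d_{2n+1}(1), \]
so the differential is entirely controlled by the single class $d_{2n+1}(1) \in H^{2n+1}(X,\ZZ)$. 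By naturality, this class is $f_{\delta}^*$ of a universal class $c \iota \in H^{2n+1}(K(\ZZ,2n+1),\ZZ) \cong \ZZ$, so $d_{2n+1}(1) = c \delta$ for a universal integer $c$.

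Third, I would pin down $c = -1$ by testing on $X = S^{2n+1}$ with twist $N \in H^{2n+1}(S^{2n+1},\ZZ) \cong \ZZ$. Using the clutching model of Section 3.1 and the topological reformulation of Theorem \ref{topological}, together with the Mayer--Vietoris sequence of Proposition \ref{MV} on the hemisphere decomposition, I would verify directly that $K^0(S^{2n+1},N) = 0$ and $K^1(S^{2n+1},N) = \ZZ_N$. The AHSS on $S^{2n+1}$ collapses to a single potentially non-zero differential $d_{2n+1}: E_{2n+1}^{0,0} \to E_{2n+1}^{2n+1,-2n}$, which is forced to be multiplication by $\pm N$ to produce this abutment. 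Fixing the sign convention via the orientation of the clutching map then gives $c = -1$ and $d_{2n+1}(1) = -\delta$; combined with the Leibniz formula this yields the desired expression $d_{2n+1}(x) = d_{2n+1}'(x) - x \cup \delta$.

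The principal obstacle is the verification of the Leibniz rule in this operator-algebraic setting: one has to check that multiplication of a scalar function by a section of $\cA_{\delta}$ respects the skeletal filtration at the level of the cycles representing classes in the spectral sequences, which is routine but technical because sections of $\cA_{\delta}$ are defined through local trivialisations rather than as honest functions on $X$. A secondary concern is matching signs so that the identification $d_{2n+1}(1) = -\delta$ produces exactly $d_{2n+1}(x) = d_{2n+1}'(x) - x \cup \delta$ rather than a variant with a sign depending on $|x|$; this depends on coordinating the conventions in the Leibniz rule with the orientation of $S^{2n+1}$ used to fix the universal sign.
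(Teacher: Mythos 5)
Your proposal reaches the same conclusion by a genuinely different route than the paper. The paper treats $d_{2n+1}$ directly as a universal additive cohomology operation for pairs $(X,\delta)$, classifies such operations by $H^{p+2n+1}(K(\ZZ,p)\times K(\ZZ,2n+1),\ZZ)$, reads off the bilinear K\"unneth summand $\ZZ\{\iota_p\smile\iota_{2n+1}\}$ to obtain the ansatz $d_{2n+1}(x)=d_{2n+1}'(x)+k\,x\cup\delta$, and fixes $k=-1$ by tracing the Mayer--Vietoris boundary map in the $S^{2n+1}$ example (this is the argument of \cite{AS1}, which the paper cites). You instead import the module-over-ring structure of the twisted AHSS over the untwisted one from \cite{AS2}, so that a Leibniz rule reduces the whole question to the single class $d_{2n+1}(1)$; naturality under the classifying map $f_\delta\colon X\to K(\ZZ,2n+1)$ then kills $d_r(1)$ for $r<2n+1$ and identifies $d_{2n+1}(1)$ as a multiple of $\delta$ via $H^{2n+1}(K(\ZZ,2n+1),\ZZ)\cong\ZZ$, and the same $S^{2n+1}$ test fixes the multiple. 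Your route is conceptually cleaner in that it needs only the low-degree cohomology of a single Eilenberg--Mac Lane space rather than the K\"unneth decomposition of the product, and it automatically absorbs torsion cross-terms that the paper's decomposition quietly elides; the cost, which you rightly flag, is that the module and Leibniz structure must actually be established in the operator-algebraic AHSS, whereas the paper sidesteps this entirely. One point to tighten: your Leibniz rule carries the Koszul sign $(-1)^{|y|}$, which does not cancel on its own since only even rows of the twisted AHSS are non-zero (so $(-1)^{|y|}=(-1)^p$); you must coordinate this with the convention in $x\cup\delta$ versus $\delta\cup x$, which themselves differ by $(-1)^{|x|}$ because $|\delta|$ is odd. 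The paper's own abstract and Theorem~\ref{HTSS} already differ by exactly that sign, so the intended assertion is really only pinned down modulo this ambiguity, and your acknowledgement of the issue is well placed.
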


\begin{proof}
We follow the argument given in \cite{AS1}. By definition, the $d_{2n+1}$ differential must be a universal cohomology operation raising degree by $2n+1$, defined for spaces with a given class $\delta \in H^{2n+1}(X,\ZZ)$. Standard arguments in homotopy theory show that these operations are classified by
\[ H^{p+2n+1}(K(\ZZ,p) \times K(\ZZ,2n+1),\ZZ), \]
where the $K(\ZZ,p)$ factor represents cohomology operations raising degree by $2n+1$ and the $K(\ZZ,2n+1)$ factor comes from $X$ being equipped with a class $\delta \in H^{2n+1}(X,\ZZ)$. This cohomology group is isomorphic to
\[ H^{p+2n+1}(K(\ZZ,p),\ZZ) \oplus H^{p+2n+1}(K(\ZZ,2n+1),\ZZ) \oplus \ZZ \]
where the third summand is generated by the product of the generators of $H^{p}(K(\ZZ,p),\ZZ)$ and $H^{2n+1}(K(\ZZ,2n+1),\ZZ)$. The only factor which will actually result in an operation $H^p(X,\ZZ) \to H^{p+2n+1}(X,\ZZ)$ is the first, and so we conclude that the differential is of the form $d_{2n+1}(x) = d_{2n+1}'(x) + k x \cup \delta$ for some $k \in \ZZ$, since the operation must agree with the spectral sequence for topological $K$-theory when $\delta=0$. We determine $k$ by explicitly computing the spectral sequence for $X = S^{2n+1}$ as follows.

The filtration for this case is particularly simple, with $X^0 = X^1 = \cdots = X^{2n}$ each consisting of a single point and $X = S^{2n+1}$. Then the spectral sequence reduces to the long exact sequence for the pair $(X,X^0)$, and the $d_{2n+1}$ differential is the boundary map $K^0(X^0,\delta|_{X^0}) \to K^1(X,X_0;\delta)$. Equivalently, using the excision property of higher twisted $K$-theory applied to the compact pair $(S^{2n+1},D^{2n+1}_+)$ and excising the interior of $D^{2n+1}_+$ we see that $K^1(X,X_0;\delta) \cong K^1(D^{2n+1}_-,S^{2n};\delta|_{D^{2n+1}_-})$ and so $d_{2n+1}$ can be viewed as the boundary map $K^0(D^{2n+1}_+,\delta|_{D^{2n+1}_+}) \to K^1(D^{2n+1}_-,S^{2n};\delta|_{D^{2n+1}_-})$. This map is the passage from top-left to bottom-right in the commutative diagram
\[
\begin{tikzcd}
K^0(D^{2n+1}_+,\delta|_{D^{2n+1}_+}) \ar{r} \ar{d} &K^1(S^{2n+1}, D^{2n+1}_+; \delta) \ar{d} \\
K^0(S^{2n},\delta|_{S^{2n}}) \ar{r} &K^1(D^{2n+1}_-,S^{2n};\delta|_{D^{2n+1}_-}).
\end{tikzcd}
\]
By studying the six-term exact sequence in higher twisted $K$-theory associated to the pair $(D^{2n+1}_-,S^{2n})$, it is clear that the lower horizontal map takes the generator $(1,0)$ of $K^0(S^{2n},\delta|_{S^{2n}}) \cong K^0(S^{2n})$, corresponding to the trivial line bundle over $S^{2n}$, to 0 and the generator $(0,1)$, corresponding to the $n$-fold reduced external product of $(H-1)$ with $H$ the tautological line bundle over $S^{2}$, to the generator of $K^1(D^{2n+1}_-,S^{2n};\delta|_{D^{2n+1}_-})$. All that remains is to determine the left-hand vertical map. This is done in the proof of Proposition \ref{spheres}, in particular this is the top horizontal map in (\ref{CD2}) because in order to identify $K^0(S^{2n},\delta|_{S^{2n}})$ with $\ZZ \oplus \ZZ$ we are using the trivialisation of $\delta$ over $D^{2n+1}_-$ as opposed to $D^{2n+1}_+$. The map is shown to be $n \mapsto (n,-Nn)$ where the twist $\delta \in H^{2n+1}(S^{2n+1},\delta)$ is given by $N \in \ZZ$ times a generator. Hence the composition sends $1 \in \ZZ$ to $-N \in \ZZ$. Since we see that $d_{2n+1}(1) = -N$ then we may conclude that $k = -1$ as required.
\end{proof}

Whilst this is not quite as explicit as the differential in the classical twisted case, since the $d_3$ differential of the Atiyah--Hirzebruch spectral sequence for topological $K$-theory is explicitly known, it is still useful as all differentials in the Atiyah--Hirzebruch spectral sequence for topological $K$-theory are torsion operators \cite{Arlettaz}. Since this result is only applicable when the twist can be represented by cohomology, it will frequently be the case that the space has torsion-free cohomology and so these torsion differentials will have no effect.

Atiyah and Segal are also able to show in \cite{AS1} that the higher differentials of the spectral sequence for classical twisted $K$-theory are given rationally by higher Massey products, and they do so by generalising the Chern character to the twisted setting. This work can likely be generalised to the higher twisted setting, and in fact the Chern character has already been generalised to this setting \cite{MMS}, but since it only gives the differentials rationally it is not highly applicable to computations.

\subsection{Segal spectral sequence}

A more powerful version of the Atiyah--Hirzebruch spectral sequence is the Segal spectral sequence, which we will use for computing higher twisted $K$-theory in more complicated settings. One may work through the details of the construction via a skeletal filtration which induces a filtration of the higher twisted $K$-theory group, but we will not present these details. At this point, we also bring higher twisted $K$-homology back into the picture, because it is in the Segal spectral sequence for higher twisted $K$-homology that the strongest information about the differentials can be easily obtained.

\begin{theorem}{\label{SSS}}
Let $F \xrightarrow{\iota} E \xrightarrow{\pi} B$ be a fibre bundle of CW complexes, and suppose that a twist $\delta$ over $E$ can be represented by a class $\delta \in H^{2n+1}(E,\ZZ)$. Then there is a homological Segal spectral sequence
\[ H_p(B,K_q(F,\iota^* \delta)) \Rightarrow K_*(E,\delta) \]
and a corresponding cohomological Segal spectral sequence
\[ H^p(B,K^q(F,\iota^* \delta)) \Rightarrow K^*(E,\delta). \]
These spectral sequences are strongly convergent if the ordinary (co)homology of $B$ is bounded.
\end{theorem}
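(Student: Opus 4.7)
The plan is to derive both spectral sequences from the exact couple associated with the skeletal filtration of $B$ pulled back through $\pi$. Let $\emptyset = B^{(-1)} \subset B^{(0)} \subset B^{(1)} \subset \cdots \subset B$ be the skeletal filtration and set $E_p = \pi^{-1}(B^{(p)})$, writing $\iota_p : E_p \hookrightarrow E$ for the inclusion and $\delta_p = \iota_p^* \delta$ for the restricted twist. The long exact sequences in higher twisted $K$-theory for the pairs $(E_p, E_{p-1})$, which exist through Pennig's construction of the relative theory (Definition 2.6 and Theorem 2.7 of \cite{Pennig}) applied to the $C^*$-algebra extension $0 \to C_0(E_p \setminus E_{p-1}, \cA_\delta) \to C_0(E_p, \cA_\delta) \to C_0(E_{p-1}, \cA_{\delta_{p-1}}) \to 0$, assemble into an exact couple whose derived spectral sequence has $E_1^{p,q} = K^{p+q}(E_p, E_{p-1}; \delta_p)$ with $d_r$ raising $p$ by $r$ and $q$ by $1-r$.

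First I would identify the $E_1$-page cell by cell. Excision applied to the quotient $E_p/E_{p-1} \simeq \bigvee_\sigma \pi^{-1}(D^p_\sigma)/\pi^{-1}(\partial D^p_\sigma)$ indexed by the $p$-cells $\sigma$ of $B$ gives
\begin{equation*}
E_1^{p,q} \;\cong\; \bigoplus_\sigma K^{p+q}\bigl(\pi^{-1}(D^p_\sigma),\, \pi^{-1}(\partial D^p_\sigma);\, \delta\bigr).
\end{equation*}
Because each $D^p_\sigma$ is contractible, $\pi$ trivialises over it as $D^p_\sigma \times F$, and under this trivialisation the restricted twist is isomorphic to the pullback of $\iota^*\delta$ along projection to $F$. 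A K\"unneth/suspension argument — applied in the $C^*$-algebraic definition to the tensor product $C_0(D^p_\sigma \setminus \partial D^p_\sigma) \otimes C(F, \cA_{\iota^*\delta})$ — then yields $K^{p+q}(D^p_\sigma \times F, \partial D^p_\sigma \times F;\, \delta) \cong K^q(F, \iota^*\delta)$, so $E_1^{p,q}$ is naturally a group of cellular $p$-cochains on $B$ with values in $K^q(F, \iota^*\delta)$.

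The main obstacle is identifying the $d_1$ differential with the cellular coboundary twisted by the monodromy of the bundle, so that $E_2^{p,q} = H^p(B; K^q(F, \iota^*\delta))$ as cohomology with local coefficients. The connecting homomorphism in the long exact sequence of $(E_{p+1}, E_p)$ is, after the local trivialisations, a signed sum of attaching maps for cells of $B$ composed with the transition isomorphisms of the bundle; each transition acts on $K^q(F, \iota^*\delta)$ by pullback along an automorphism in the category of locally compact spaces equipped with compatible algebra bundle isomorphisms described after Definition \ref{homology}. This yields precisely the cellular cochain differential with the local system $K^q(F, \iota^*\delta)$, and reproduces Rosenberg's identification \cite{RosenbergLie} in the classical twisted case, since his argument uses only excision and functoriality of the twisted theory, both of which are now available. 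Strong convergence comes from the usual criterion: the filtration is bounded below since $E_{-1} = \emptyset$, and when $H^*(B)$ is bounded the filtration of $K^*(E, \delta)$ is eventually constant in each total degree, so the Boardman convergence condition is automatic.

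For the homological spectral sequence I would run the parallel argument using Pennig's higher twisted $K$-homology (Definition \ref{homology}) applied to the same filtered pairs. Because $KK$-theory is contravariant in its first argument, applying $KK_*(C_0(-, \cA_\delta), \cO_{\infty})$ to the filtration produces an exact couple whose associated spectral sequence is homological with $E^1_{p,q} = K_{p+q}(E_p, E_{p-1}; \delta_p)$. The same local-trivialisation and K\"unneth argument identifies $E^1_{p,q}$ with cellular $p$-chains on $B$ valued in $K_q(F, \iota^*\delta)$, and the compatibility of transition functions with pullback in $KK$-theory identifies the $E^2$-page with $H_p(B; K_q(F, \iota^*\delta))$. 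Convergence follows under the same bounded-dimension hypothesis on $H^*(B)$.
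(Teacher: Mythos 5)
Your proof is correct and takes essentially the same approach as the paper, which simply delegates to Rosenberg's Theorem 3 in \cite{RosenbergTwist} and Segal's Proposition 5.2 in \cite{Segal}: filter $E$ by preimages of the skeleta of $B$, form the exact couple from the long exact sequences of filtered pairs, identify the $E_1$-page cell by cell using excision and trivialisation, and recognise $d_1$ as the cellular (co)boundary. You have supplied the details the paper leaves implicit, and your remark that the $E_2$-page is in general cohomology with local coefficients in $K^q(F,\iota^*\delta)$ is a correct precision that the paper's statement elides.
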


\begin{proof}
The proof follows from standard methods, for instance Rosenberg's proof of Theorem 3 in \cite{RosenbergTwist} can be adapted which employs Segal's original proof in Proposition 5.2 of \cite{Segal}.
\end{proof}

\begin{remark}
The ordinary (co)homology of $B$ will be bounded if $B$ is weakly equivalent to a finite-dimensional CW complex and this will cover all of the cases that we consider, so we obtain strong convergence from this spectral sequence.
\end{remark}

Note that we refer to this as a Segal spectral sequence because the method of proof employs Segal's original techniques from \cite{Segal}.

As mentioned above, there is more that can be said about the differentials in the homology spectral sequence. Note that the following theorem uses a Hurewicz map in higher twisted $K$-homology which we have not developed. We will not need to use this map explicitly at any time, and so we do not present the details of its construction. The construction of the map is standard for extraordinary homology theories; see for instance Section II.6 of \cite{Adams} for homology in general and Section II.14 for ordinary $K$-homology. In the higher twisted setting we do not obtain a simple interpretation of the Hurewicz map as Rosenberg is able to for classical twisted $K$-homology in the statement of Theorem 6 \cite{RosenbergTwist}, as $\Aut(\OK)$ is homotopically more complicated than an Eilenberg--Mac Lane space.

\begin{theorem}{\label{differentials}}
In the setting of the homology Segal spectral sequence of Theorem \ref{SSS}, suppose that
\begin{itemize}
\item $\iota^*: H^{2n+1}(E,\ZZ) \to H^{2n+1}(F,\ZZ)$ is an isomorphism, so that the twisting class $\delta$ on $E$ can be identified with the restricted twisting class $\iota^* \delta$ on $F$,
\item the differentials $d^2, \cdots, d^{r-1}$ leave $E^2_{r,0} = H_r(B,K_0(F,\iota^* \delta))$ unchanged, or equivalently $E^2_{r,0} = E^3_{r,0} = \cdots = E^r_{r,0}$, and
\item there is a class $x \in E^2_{r,0}$ which comes from a class $\alpha \in \pi_r(B)$ under the Hurewicz map $\pi_r(B) \to H_r(B,K_0(F,\iota^* \delta))$.
\end{itemize}
Then $d^r(x) \in E^r_{0,r-1}$ is given by the image of $\alpha$ under the composition of the boundary map $\partial: \pi_r(B) \to \pi_{r-1}(F)$ in the long exact sequence of the fibration and the Hurewicz map $\pi_{r-1}(F) \to K_{r-1}(F,\iota^* \delta)$.
\end{theorem}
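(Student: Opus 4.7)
The plan is to reduce to the universal situation of a fibration over $S^r$ by pulling back along $\alpha$, and then to compute the single possibly nontrivial differential in that much simpler spectral sequence using a clutching description of the pulled-back bundle. Both steps proceed by naturality, of the Segal spectral sequence produced in Theorem \ref{SSS} and of the Hurewicz map into $K$-homology.

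First I would form the pullback fibration $\pi' \colon \alpha^* E \to S^r$ with fibre $F$, equipped with the pulled-back twist $\delta'$; the first hypothesis ensures $\delta'$ restricts to $\iota^* \delta$ on each fibre. The map $\alpha$ induces a morphism of Segal spectral sequences, and by naturality of the Hurewicz map the image of the fundamental class $[S^r] \in \pi_r(S^r)$ in $(E')^2_{r,0} = H_r(S^r, K_0(F, \iota^* \delta)) \cong K_0(F, \iota^* \delta)$ is carried to $x \in E^2_{r,0}$. The second hypothesis guarantees $x$ survives to the $E^r$-page, so it suffices to compute $d^r$ on the Hurewicz image of $[S^r]$ in the pulled-back sequence; the result in $(E')^r_{0,r-1}$ then maps to $d^r(x) \in E^r_{0,r-1}$.

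Second, in the pulled-back spectral sequence all of $(E')^2_{p,q}$ vanishes except in columns $p = 0$ and $p = r$, so the only possibly nonzero differential is $d^r$ itself and the target $(E')^r_{0,r-1}$ coincides with $(E')^2_{0,r-1} \cong K_{r-1}(F, \iota^* \delta)$. I would then identify this $d^r$ with the connecting homomorphism of the Mayer--Vietoris sequence of Proposition \ref{MV} applied to the closed cover $S^r = D^r_+ \cup D^r_-$. Since each hemisphere is contractible, $\alpha^* E|_{D^r_\pm} \simeq F$, and the relevant piece of the sequence reads
\begin{equation*}
\cdots \to K_*(F, \iota^* \delta)^{\oplus 2} \to K_*(S^{r-1} \times F, \iota^* \delta) \to K_{*-1}(\alpha^* E, \delta') \to \cdots,
\end{equation*}
while the K\"unneth decomposition $K_*(S^{r-1} \times F, \iota^* \delta) \cong K_*(F, \iota^* \delta) \oplus K_{*-r+1}(F, \iota^* \delta)$ singles out, via pairing with $[S^{r-1}]$, the summand contributing to $(E')^r_{r,0}$.

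Third, the bundle $\pi'$ is obtained by clutching $D^r_\pm \times F$ via a map $\phi \colon S^{r-1} \to \Aut(F)$ preserving $\iota^* \delta$; evaluation at a basepoint of $F$ yields $\widetilde\phi \colon S^{r-1} \to F$ whose homotopy class equals $\partial \alpha \in \pi_{r-1}(F)$, by the standard identification of the boundary in the long exact homotopy sequence of a fibration with the adjoint of the clutching data. The Mayer--Vietoris boundary then sends the generator of the $K_0(F, \iota^* \delta)$ summand associated to $[S^{r-1}]$ to the class of $\widetilde\phi_*[S^{r-1}]$ in $K_{r-1}(F, \iota^* \delta)$, which is exactly the image of $\partial \alpha$ under the Hurewicz map $\pi_{r-1}(F) \to K_{r-1}(F, \iota^* \delta)$. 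Transporting this back along the map of spectral sequences induced by $\alpha$ yields the formula claimed for $d^r(x)$.

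The main obstacle will be carrying out the last two steps rigorously in the presence of the twist. The clutching map takes values in the subgroup of $\Aut(F)$ preserving $\iota^* \delta$, and one must verify that the Mayer--Vietoris boundary, the K\"unneth splitting over $S^{r-1} \times F$, and the Hurewicz map into twisted $K$-homology all interact coherently with this clutching. The first hypothesis of the theorem is what makes this possible: it forces $\delta'$ on $\alpha^* E$ to be genuinely a clutching of $\iota^* \delta$ with itself, so that the transition data on the equator live in the twist-preserving automorphisms. Once this point is secured, the computation mirrors the proof of the analogous statement in classical twisted $K$-homology due to Rosenberg \cite{RosenbergLie}, with the natural transformation from the sphere spectrum to the twisted $K$-homology spectrum playing the role of the ordinary Hurewicz map.
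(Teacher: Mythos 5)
Your reduction to the universal case $B = S^r$ by pulling back along $\alpha$ and invoking naturality of the Segal spectral sequence and of the Hurewicz map is exactly the first move the paper makes. After that, however, the two arguments diverge. The paper notes that with the minimal CW structure on $S^r$ (one $0$-cell and one $r$-cell) the skeletal filtration of $\alpha^*E$ has only two non-degenerate stages, $F$ and $\alpha^*E$, so the exact couple underlying the Segal spectral sequence collapses to the long exact sequence of the pair $(\alpha^*E, F)$; the differential $d^r$ is then \emph{literally} the boundary map $K_r(\alpha^*E, F; \delta') \cong K_0(F,\iota^*\delta) \to K_{r-1}(F,\iota^*\delta)$ in that sequence, and the theorem follows from a single commutative square expressing naturality of the Hurewicz homomorphism with respect to boundary maps. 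You instead pass through the Mayer--Vietoris sequence for the hemispheres, a K\"unneth decomposition over $S^{r-1} \times F$, and an explicit clutching description of $\alpha^*E$. That route can be made to work, but each of the extra steps introduces a verification that the paper's approach bypasses: (i) the identification of $d^r$ with the Mayer--Vietoris connecting homomorphism is not automatic and would have to be established by relating the Mayer--Vietoris sequence to the LES of the pair via excision, which is essentially what the paper's filtration argument does directly; (ii) the K\"unneth decomposition you invoke is the $K$-\emph{theory} K\"unneth theorem, and what you need here is its $K$-\emph{homology} analogue, applied to a twist that must be seen to pull back from the $F$ factor; and (iii) the assertion that evaluating the clutching map at a basepoint of $F$ yields $\partial\alpha$ is the standard fact for the homotopy LES of a fibre bundle, but it needs a careful statement here since the relevant ``structure group'' of a general fibration is not $\Aut(F)$ and the twist must be carried along. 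You correctly identify the central obstacle -- that the transition data must respect the restricted twist, guaranteed by the first hypothesis -- but that subtlety is handled more economically at the level of the two-stage filtration than at the level of a clutching description. If you want to complete your argument, step (i) is the real lacuna: without it, the MV connecting map has not been shown to compute $d^r$.
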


\begin{proof}
Since the class $x$ was not changed by the differentials $d^2, \cdots, d^{r-1}$ and the twisting class comes from the fibre, we can take $B$ to be $S^r$ and $E = (\RR^r \times F) \cup F$ without loss of generality where $\RR^r \times F$ is $\pi^{-1}$ of the open $r$-cell in $B$. In this special case, as noted by Rosenberg in the proof of Theorem 6 \cite{RosenbergTwist} the spectral sequence comes from the long exact sequence
\[ \cdots \to K_r(F,\iota^* \delta) \xrightarrow{\iota_*} K_r(E,\delta) \to K_r(E,F,\delta) \cong K_0(F,\iota^* \delta) \xrightarrow{\partial} K_{r-1}(F,\iota^* \delta) \to \cdots \]
where we identify $K_0(F,\iota^* \delta)$ with $H_r(B,K_0(F,\iota^* \delta))$. Hence the differential $d^r$ is simply the boundary map in this sequence, and the result follows from the naturality of the Hurewicz homomorphism which implies the commutativity of the diagram
\[
\begin{tikzcd}
\pi_r(B) \ar{r}{\partial} \ar{d}[swap]{\text{Hurewicz}} &\pi_{r-1}(F) \ar{d}{\text{Hurewicz}} \\
H_r(B,K_0(F,\iota^* \delta)) \ar{r}{\partial} &K_{r-1}(F,\iota^* \delta).
\end{tikzcd}
\]
\end{proof}

With these tools developed, we are well-equipped to compute higher twisted $K$-theory for a variety of spaces.

\section{Computations}

This final section is dedicated to computation, allowing for the Mayer--Vietoris sequence and the spectral sequences to be applied. As higher twisted $K$-theory forms a generalisation of both topological and classical twisted $K$-theory, we will see that computations for these variants of $K$-theory will fall out as a result of our computations.

As discussed in the Introduction, computations in higher twisted $K$-theory may be of physical interest in the realms of string theory and M-theory. While we will not give explicit physical descriptions of our computations here, further research into the relationship between higher twisted $K$-theory and physics may help to provide insight into both fields and allow these results to lead to a greater understanding of M-theory.

\subsection{Spheres}


We have an explicit description of the bundles of interest over the spheres via the clutching construction, and so we will begin by computing the higher twisted $K$-theory of the odd-dimensional spheres. This should reduce to known results in the case that trivial twists are used or in the case of classical twists over $S^3$. We will provide a highly detailed computation for the spheres to illustrate the method of using the Mayer--Vietoris sequence and determining the maps in the sequence, and present other computations more concisely.

Note that in this section we refer to ``the generator'' of various cohomology groups isomorphic to $\ZZ$. The choice of generator is unimportant here, because if a twist is $N$ times one generator then it will be $-N$ times the other. The integer $N$ only appears in our results in the form $\ZZ_N$, and since the groups $\ZZ_N$ and $\ZZ_{-N}$ are isomorphic, the results will be true regardless of the choice of generator.

\begin{proposition}\label{spheres}
Let $\delta \in H^{2n+1}(S^{2n+1},\ZZ)$ be a twist of $K$-theory for $S^{2n+1}$ which is $N$ times the generator, where $n \geq 1$. The higher twisted $K$-theory of $S^{2n+1}$ is then
\[ K^0(S^{2n+1}, \delta) = 0 \quad \text{and} \quad K^1(S^{2n+1}, \delta) = \ZZ_N \]
if $N \neq 0$, or
\[ K^0(S^{2n+1}) = \ZZ \quad \text{and} \quad K^1(S^{2n+1}) = \ZZ \]
when $N = 0$.
\end{proposition}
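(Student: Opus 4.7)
The plan is to apply the Mayer--Vietoris sequence of Proposition \ref{MV} to the decomposition $S^{2n+1} = D_+^{2n+1} \cup D_-^{2n+1}$, obtained by slightly thickening the two closed hemispheres so that their interiors cover $S^{2n+1}$ and their intersection deformation retracts onto the equator $S^{2n}$. Both hemispheres are contractible, so the restricted twists $\delta|_{D_\pm}$ are necessarily trivial, giving $K^0(D_\pm, \delta|_{D_\pm}) \cong \ZZ$ and $K^1(D_\pm, \delta|_{D_\pm}) = 0$. By Theorem \ref{cohomology}, the twists on $S^{2n}$ are controlled by $H^{2n+1}(S^{2n},\ZZ) = 0$, so $\delta|_{S^{2n}}$ is trivial as well, giving $K^0(S^{2n}) \cong \ZZ \oplus \ZZ$ and $K^1(S^{2n}) = 0$. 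The six-term sequence then collapses to
\[ 0 \longrightarrow K^0(S^{2n+1}, \delta) \longrightarrow \ZZ \oplus \ZZ \xrightarrow{\ \alpha\ } \ZZ \oplus \ZZ \longrightarrow K^1(S^{2n+1}, \delta) \longrightarrow 0, \]
so the entire computation reduces to identifying $\alpha = j_1^* - j_2^*$.

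The crucial step is to compute $\alpha$ via the clutching description of Section 3.1 combined with the topological reformulation of Theorem \ref{topological}. By hypothesis, the principal $\Aut(\OK)$-bundle $\cE_\delta$ is built by clutching the trivial bundles over $D_\pm$ via a map $\phi : S^{2n} \to \Aut(\OK)$ whose homotopy class is $N$ times a generator of $\pi_{2n}(\Aut(\OK)) \cong \ZZ$ from Theorem \ref{homotopy}. An $\Aut(\OK)$-equivariant map $\cE_\delta \to \Fred_{\OK}$ then corresponds to an unconstrained pair $F_\pm : D_\pm \to \Fred_{\OK}$ which, on the overlap $S^{2n}$, are related by the conjugation action (\ref{actionfred}) twisted by $\phi$. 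Restriction from $D_+$ gives the standard inclusion $j_1^*: \ZZ \to \ZZ \oplus \ZZ$, $m \mapsto (m, 0)$, in the basis given by the trivial rank-one class and the Bott generator of $K^0(S^{2n})$. For $j_2^*$, taking $F_-$ constant and comparing to the $D_+$ trivialization reduces the question to the homomorphism $\pi_{2n}(\Aut(\OK)) \to \pi_{2n}(\Fred_{\OK}) \cong \ZZ$ induced by conjugation; identifying this homomorphism with multiplication by the Bott element gives $j_2^*(n) = (n, -Nn)$. I would record this in a commutative diagram so that the same computation can be reused in the proof of Theorem \ref{HTSS}, and then conclude $\alpha(m, n) = (m - n, Nn)$.

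Reading off kernel and cokernel of $\alpha$ finishes the proof. When $N \neq 0$, the system $m = n$, $Nn = 0$ forces $m = n = 0$, so $K^0(S^{2n+1}, \delta) = 0$, and the image $\ZZ \oplus N\ZZ$ has cokernel $\ZZ_N$, giving $K^1(S^{2n+1}, \delta) \cong \ZZ_N$. When $N = 0$, the map is $(m, n) \mapsto (m - n, 0)$ with kernel $\{(m, m)\} \cong \ZZ$ and cokernel $\ZZ$, correctly recovering the untwisted $K$-theory of $S^{2n+1}$ and providing a useful consistency check. The main obstacle is the second step: verifying that pointwise conjugation by $\Aut(\OK)$ on $\Fred_{\OK}$ induces multiplication by the Bott element on $\pi_{2n}$, so that the appearance of the exact factor $-N$ in $j_2^*$ (and hence the precise group $\ZZ_N$, rather than $\ZZ_{Nk}$ for some unwanted integer $k$) is genuinely pinned down.
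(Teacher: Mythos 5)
Your overall strategy matches the paper's: reduce to the hemisphere decomposition, use the six-term/Mayer--Vietoris sequence, land on the four-term exact sequence $0 \to K^0(S^{2n+1},\delta) \to \ZZ^2 \to \ZZ^2 \to K^1(S^{2n+1},\delta) \to 0$, and compute the middle map to be $(m,n) \mapsto (m-n,Nn)$. The paper does not literally apply Proposition \ref{MV}; instead it constructs an explicit short exact sequence of $C^*$-algebras $0 \to A_\delta \to C(D^{2n+1}_+ \amalg D^{2n+1}_-, \OK) \to C(S^{2n},\OK) \to 0$ coming from Lemma \ref{sections} and takes the six-term sequence of that, which amounts to the same thing but keeps the section algebras visible. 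The genuine divergence is in how the crucial factor of $-N$ is obtained. The paper records the two trivializations $t_\pm$ of $\cA_\delta$ over $S^{2n}$, works out that $t_+ \circ t_-^{-1}$ acts on the section algebra $C(S^{2n},\OK)$ by pointwise multiplication by $f^{-1}$, and fits $j_\pm^*$ into commutative diagrams (\ref{CD1}) and (\ref{CD2}) relating the twisted $K$-groups to untwisted ones; the $-N$ then appears as the effect of multiplication by $f^{-1}$ on $K^0(S^{2n})$. You instead route through Theorem \ref{topological} and try to reduce the computation to the conjugation homomorphism $\pi_{2n}(\Aut(\OK)) \to \pi_{2n}(\Fred_{\OK})$, and you correctly flag that identifying this map with multiplication by the Bott element is exactly what remains to be checked. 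That identification is a genuine gap in your write-up: nothing in the paper's tools directly supplies it, and if that homomorphism were multiplication by some $k \ne \pm 1$ the answer would be $\ZZ_{Nk}$, not $\ZZ_N$. The paper's trivialization-tracking route is preferable precisely because it replaces this abstract question about $\pi_{2n}(\Fred_{\OK})$ with a concrete statement about multiplication by $f^{-1}$ on a commutative-up-to-bundle section algebra, which can be nailed down for $n=1$ by comparison with the well-established $S^3$ computation in classical twisted $K$-theory and then transported to higher $n$ by the same formal argument. If you want to keep your Fredholm-picture approach you should either give a direct proof that conjugation induces the identity on $\pi_{2n}(\Fred_{\OK})$ under the identification with $K^0(S^{2n})$, or defer to the paper's trivialization diagrams (\ref{spherediagram}), (\ref{CD1}), (\ref{CD2}) which make the $-N$ structural rather than ad hoc. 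Note also that this lemma feeds directly into the proof of Theorem \ref{HTSS}, so leaving the $-N$ unverified propagates into the spectral-sequence differential.
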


\begin{proof}
Given a twist $\delta$ as above, we take a degree $N$ gluing map $f: S^{2n} \to \Aut(\OK)$ and form the clutching bundle $\cE_f$ to represent $\delta$. We then construct a short exact sequence of $C^*$-algebras including the algebra of sections of the associated algebra bundle $\cA_{\delta}$, allowing the higher twisted $K$-theory groups to be computed via a six-term exact sequence. As shown in Lemma \ref{sections}, the space of sections of this algebra bundle is of the form
\[ A_{\delta} = \{ (h_+,h_-) \in C(D^{2n+1}_+ \amalg D^{2n+1}_-,\OK) : h_+(x) = f(x) \cdot h_-(x) \, \forall \, x \in S^{2n} \}, \]
where again we denote $C(D^{2n+1}_+,\OK) \oplus C(D^{2n+1}_-,\OK)$ by $C(D^{2n+1}_{+} \amalg D^{2n+1}_-,\OK)$ for brevity. Then we may define the short exact sequence
\[ 0 \to A_{\delta} \xrightarrow{\iota} C(D^{2n+1}_{+} \amalg D^{2n+1}_-,\OK) \xrightarrow{\pi} C(S^{2n},\OK) \to 0, \]
where $\iota$ denotes inclusion and $\pi(h_+,h_-)(x) = h_+(x) - f(x) \cdot h_-(x)$ to make the sequence exact. Applying the six-term exact sequence gives \small
\[
\begin{tikzcd}
K_0(A_{\delta}) \ar{r}{\iota_*} &K_0(C(D^{2n+1}_{+} \amalg D^{2n+1}_-,\OK)) \ar{r}{\pi_*} &K_0(C(S^{2n},\OK)) \ar{d}{\partial} \\
K_1(C(S^{2n},\OK)) \ar{u}{\partial} &K_1(C(D^{2n+1}_{+} \amalg D^{2n+1}_-,\OK)) \ar{l}{\pi_*} &K_1(A_{\delta}). \ar{l}{\iota_*}
\end{tikzcd}
\] \normalsize
We are able to simplify several terms in this sequence using trivialisations of the algebra bundle. Firstly, since the hemispheres $D^{2n+1}_+$ and $D^{2n+1}_-$ are contractible, the algebra bundle $\cA_{\delta}$ can be assumed trivial over these hemispheres. To be more precise, using a trivialisation $t_+$ over the upper hemisphere we are able to identify $K_n(C(D^{2n+1}_+,\OK))$ with $\ZZ$ for $n=0$ and $0$ for $n=1$, and similarly trivialising via $t_-$ over the lower hemisphere identifies $K_0(C(D^{2n+1}_-,\OK))$ with $\ZZ$ for $n=0$ and $0$ for $n=1$.

We may also simplify the terms involving the equatorial sphere $S^{2n}$, since the restriction of $\cA_{\delta}$ to $S^{2n}$ will be necessarily trivial due to $S^{2n}$ having trivial odd-degree cohomology. At this point we must make a choice of trivialisation, since we have both $t_+$ and $t_-$ which can trivialise $\cA_{\delta}$ over $S^{2n}$. We choose $t_+$, and in doing so we identify $K_0(S^{2n},\OK)$ with $\ZZ \oplus \ZZ$ for $n=0$ and $0$ for $n=1$.

This reduces the six-term exact sequence to
\[ 0 \to K^0(S^{2n+1},\delta) \to \ZZ \oplus \ZZ \xrightarrow{\pi_*} \ZZ \oplus \ZZ \to K^1(S^{2n+1},\delta) \to 0. \]

The only map to determine here is $\pi_*$, and to do so we must study the differing trivialisations of $\cA_{\delta}$ over $S^{2n}$ since $\pi_*$ is a priori a map between higher twisted $K$-theory groups. Using the Mayer--Vietoris sequence in Proposition \ref{MV}, the map $\pi_*$ is given by the difference $j_{+}^* - j_{-}^*$, where $j_{\pm}: S^{2n} \to D^{2n+1}_{\pm}$ is inclusion and $j_{\pm}^*$ denotes the induced map on higher twisted $K$-theory. Since we have trivialised the bundle over $S^{2n}$ using $t_+$, we will need to take the differing trivialisations into account when determining the map $j_-^*$. The trivialisations of $\cA_{\delta}$ over $S^{2n}$ fit into the commuting diagram
\begin{equation}\label{spherediagram}
\begin{tikzcd}
K^0(S^{2n}, \delta|_{S^{2n}}) \arrow{r}[swap]{(t_+)_*}{\cong}  \arrow{d}{(t_-)_*}[anchor=center,rotate=90,yshift=1ex,swap]{\cong} & K^0(S^{2n}) \\
K^0(S^{2n}) \arrow{ru}&
\end{tikzcd}
\end{equation}
where the map $(t_+)_* \circ {(t_-)_*}^{-1}$ must be determined to change coordinates from $D^{2n+1}_-$ to $D^{2n+1}_+$. In order to do this, we write the trivialisations explicitly.

Firstly, observe that the restriction of $\cA_{\delta}$ to $D^{2n+1}_+$ is the quotient of 
\[ (D^{2n+1}_+ \times (\OK)) \amalg (S^{2n} \times (\OK)) \]
under the usual equivalence relation on the equatorial sphere. Then $t_+$ will be the map sending the class of $(x,o)$ using the representative $x \in D^{2n+1}_+$ to the point $(x,o)$. Similarly, $t_-$ sends the class of $(x,o)$ using the representative $x \in D^{2n+1}_-$ to $(x,o)$. So taking the equivalence relation on $S^{2n}$ into account, these trivialisations differ by the transition function \[t_+ \circ (t_-)^{-1}: S^{2n} \times (\OK) \to S^{2n} \times (\OK)\] given by $(x,o) \mapsto (x, f(x)^{-1}(o))$.



These trivialisations induce maps $(t_{\pm})_*: C(D^{2n+1}_{\pm}, \cA_{\delta}|_{D^{2n+1}_{\pm}}) \to C(S^{2n}, \cA_{\delta}|_{S^{2n}})$ on the section algebras in the obvious way, and composition gives \[(t_+ \circ (t_-)^{-1})_* : C(S^{2n}, \OK) \to C(S^{2n}, \OK)\] sending $g: S^{2n} \to \OK$ to the map $S^{2n} \ni x \mapsto f(x)^{-1} \cdot g(x)$. These maps then in turn induce maps between $K$-theory groups as in the commutative diagram (\ref{spherediagram}).

We will now determine the maps ${j_{\pm}}^*$ induced on higher twisted $K$-theory. Firstly for $j_+$ we have the commutative diagram
\begin{equation}\label{CD1}
\begin{tikzcd}
K^*(D^{2n+1}_+, \delta|_{D^{2n+1}_+}) \ar{r}{{j_+}^*} \ar{d}[swap]{(t_+)_{*}} &K^*(S^{2n}, \delta|_{S^{2n}}) \ar{d}{(t_+)_{*}} \\
K^*(D^{2n+1}_+) \ar{r} &K^*(S^{2n})
\end{tikzcd}
\end{equation}
where the lower map $K^*(D^{2n+1}_+) \to K^*(S^{2n})$ is the map induced by $j_+$ on ordinary $K$-theory. Thus we see that $j^*_+$ is the same as the map induced by $j_+$ on ordinary $K$-theory, which is $j^*_+(m)=(m,0)$. This is expected, because both bundles are trivialised via $t_+$. On $D^{2n+1}_-$, however, we must change coordinates via the transition function $t_+ \circ (t_-)^{-1}$ so that we are trivialising the bundle over $S^{2n}$ via $t_+$ rather than $t_-$. This gives the diagram
\begin{equation}\label{CD2}
\begin{tikzcd}
K^*(D^{2n+1}_-, \delta|_{D^{2n+1}_-}) \ar{r}{{j_-}^*} \ar{d}[swap]{(t_-)_{*}} &K^*(S^{2n}, \delta|_{S^{2n}}) \ar{d}{(t_+)_{*}} \\
K^*(D^{2n+1}_-) \ar{r} &K^*(S^{2n})
\end{tikzcd}
\end{equation}
and thus $j_-^*$ can be viewed as the map induced by $j_-$ on ordinary $K$-theory followed by $(t_+ \circ (t_-)^{-1})_{*}$. Since the map induced by $t_+ \circ (t_-)^{-1}$ on section algebras is multiplication by $f^{-1}$, we seek the map on $K$-theory induced by the composition
\[ C(D^{2n+1}_-,\cA_{\delta}|_{D^{2n+1}_-}) \xrightarrow{\text{res}} C(S^{2n},\cA_{\delta}|_{S^{2n}}) \xrightarrow{\times f^{-1}} C(S^{2n},\cA_{\delta}|_{S^{2n}}). \]
In the case of topological $K$-theory when $N = 0$ this is the map $n \mapsto (n,0)$, but if $N \neq 0$ then the second component of this map is non-trivial, resulting in $n \mapsto (n,-Nn)$ with the factor of $-N$ corresponding to multiplication by $f^{-1}$.

Thus $\pi_*(m,n) = (m,0) - (n,-Nn) = (m-n,Nn)$, which has trivial kernel and whose cokernel is $(\ZZ \oplus \ZZ) / (\ZZ \oplus N \ZZ) \cong \ZZ_N$ when $N \neq 0$. So we are able to conclude via the exact sequence that $K^0(S^{2n+1},\delta) = 0$ while $K^1(S^{2n+1},\delta) \cong \ZZ_N$. Note that if $N=0$ we instead have $\pi_*(m,n) = (m-n,0)$ with kernel and cokernel $\ZZ$ corresponding to the standard topological $K$-theory of $S^{2n+1}$.
\end{proof}

While this computation shows that the higher twisted $K$-theory of the odd-dimensional spheres agrees with and extends the classical notion of twisted $K$-theory for $S^3$, it is desirable to have an explicit geometric representative for the generator of $K^1(S^{2n+1},\delta)$. To do so, we shift our viewpoint to the equivalent definition of higher twisted $K$-theory in terms of generalised Fredholm operators presented in Theorem \ref{topological}. Firstly, we need a lemma allowing us to view this higher twisted $K$-theory group in a slightly different way.

\begin{lemma}\label{K1sphere}
The higher twisted $K$-theory group $K^1(S^{2n+1},\delta)$ can be expressed as
\[ \pi_0(\{ (h_+, h_-) \in C(D^{2n+1}_+ \amalg D^{2n+1}_-, \Omega \Fred_{\OK})  : h_+(x) = f(x) \cdot h_-(x) \, \forall \, x \in S^{2n} \}). \]
\end{lemma}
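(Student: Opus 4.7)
The plan is to combine Theorem \ref{topological} with the equivariant-map/section correspondence of Lemma \ref{equiv} and then translate the resulting sections into clutching data, mirroring exactly the strategy used for the algebra bundle in Lemma \ref{sections}. By Theorem \ref{topological} applied to $X = S^{2n+1}$ with the principal bundle $\cE_f$ obtained via the clutching construction, we have
\[ K^1(S^{2n+1}, \delta) = [\cE_f, \Omega\Fred_{\OK}]^{\Aut(\OK)} = \pi_0\bigl(C(\cE_f, \Omega\Fred_{\OK})^{\Aut(\OK)}\bigr), \]
where $\Aut(\OK)$ acts on $\Omega\Fred_{\OK}$ pointwise along the loop via the conjugation action (\ref{actionfred}). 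So the task reduces to identifying the space $C(\cE_f, \Omega\Fred_{\OK})^{\Aut(\OK)}$ with the space of pairs appearing in the statement of the lemma.

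Next I would apply Lemma \ref{equiv}, which identifies $\Aut(\OK)$-equivariant continuous maps $\cE_f \to \Omega\Fred_{\OK}$ with continuous sections of the associated fibre bundle $\cE_f \times_{\Aut(\OK)} \Omega\Fred_{\OK}$ over $S^{2n+1}$. Since $\cE_f$ is constructed by clutching trivial $\Aut(\OK)$-bundles over $D^{2n+1}_\pm$ using $f$, the associated bundle is likewise obtained by clutching the trivial $\Omega\Fred_{\OK}$-bundles over the two hemispheres. I would then repeat the argument of Lemma \ref{sections} verbatim with $\OK$ replaced by $\Omega\Fred_{\OK}$: a section restricts to a continuous map $h_\pm : D^{2n+1}_\pm \to \Omega\Fred_{\OK}$ on each hemisphere, and the quotient relation $(x,T) \sim (x, f(x) \circ T)$ on the equator forces the compatibility condition $h_+(x) = f(x) \cdot h_-(x)$ for all $x \in S^{2n}$. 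Conversely, any such pair defines a section, giving a bijection at the level of section spaces.

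Taking $\pi_0$ of both sides then yields the stated description of $K^1(S^{2n+1}, \delta)$. The one point that requires attention (though it is genuinely routine rather than a serious obstacle) is checking that the pointwise conjugation action of $\Aut(\OK)$ on $\Omega\Fred_{\OK}$ interacts correctly with the clutching identification, so that the analogue of Lemma \ref{sections} applies without modification; this follows because the action is defined pointwise along each loop and is continuous in both the operator and the automorphism. No new analytic input beyond what is already used in Theorem \ref{topological} and Lemma \ref{sections} is needed, so the proof is essentially a direct transcription of the algebra-bundle case into the $\Omega\Fred_{\OK}$-bundle case.
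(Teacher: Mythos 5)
Your proposal is correct and follows essentially the same route as the paper: apply Theorem \ref{topological} to reduce to $\Aut(\OK)$-equivariant maps $\cE_f \to \Omega\Fred_{\OK}$, and then unwind the clutching construction to identify these with compatible pairs $(h_+,h_-)$ on the hemispheres. The paper simply does the second step directly (writing down the explicit gluing formula $h([x,T])(t)(v) = h_\pm(x)(t)(T\cdot v)$ and checking well-definedness and equivariance on the equator) rather than factoring it through Lemma \ref{equiv} and an $\Omega\Fred_{\OK}$-valued analogue of Lemma \ref{sections}, but the content is the same.
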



\begin{proof}
Recall that $K^1(S^{2n+1},\delta) = \pi_0(C(\cE_{\delta},\Omega \Fred_{\OK})^{\Aut(\OK)})$. Since $\cE_{\delta}$ is constructed via the clutching construction, an element of $C(\cE_{\delta},\Omega \Fred_{\OK})^{\Aut(\OK)}$ can be viewed as a pair of maps $h_{\pm}: D^{2n+1}_{\pm} \to \Omega \Fred_{\OK}$ satisfying $h_+(x) = f(x) \cdot h_-(x)$ for all $x \in S^{2n}$. Conversely, any such pair of maps $h_{\pm}$ may be glued together to form the equivariant map $h: \cE_{\delta} \to \Omega \Fred_{\OK}$ via
\[ h([x,T])(t)(v) = \begin{cases} h_+(x)(t)(T \cdot v) &\text{ if } x \in D^{2n+1}_+; \\ h_-(x)(t)(T \cdot v) &\text{ if } x \in D^{2n+1}_-; \end{cases} \]
where $x \in S^{2n+1}$, $T \in \Aut(\OK)$, $t \in S^1$ and $v \in \cH_{\OK}$. Firstly, we note that $h$ is well-defined. If $[(x,T)]$ is chosen with $x \in S^{2n}$, then the two possible representatives of this point are $(x, T) \in D^{2n+1}_+ \times \Aut(\OK)$ and $(x,f(x) \circ T) \in D^{2n+1}_- \times \Aut(\OK)$. But $h_-(x)(t)((f(x) \circ T) \cdot v) = (f(x) \cdot h_-(x))(t)(T \cdot v)$ by the definition of the action, and this is equal to $h_+(x)(t)(T \cdot v)$ by the compatibility of the maps on the equatorial sphere. Similarly, the map $h$ is $\Aut(\OK)$-equivariant by the definition of the action as required.
\end{proof}

Using this viewpoint, we are able to construct a representative for the generator of the higher twisted $K$-theory group. In our description of the generator, we use an isomorphism between the hemisphere $D^{2n+1}_+$ with its boundary identified to a point and the sphere $S^{2n+1}$. This allows us to view a map on $S^{2n+1}$ as a map on $D^{2n+1}_+$ which is constant on the boundary. We illustrate this for clarity in Figure \ref{figure} in the case of $S^2$ which can be visualised.

\begin{figure}[!htb]
\hspace{-0.8cm}
    \begin{minipage}{0.25\textwidth}
      \includegraphics[scale=2]{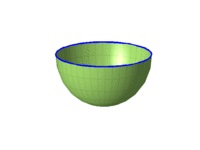}
     \end{minipage} \qquad \quad $\cong$
     \begin{minipage}{0.25\textwidth}
        \includegraphics[scale=2]{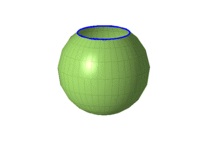}
     \end{minipage} \qquad \quad $\cong$
     \begin{minipage}{0.25\textwidth}
       \includegraphics[scale=2]{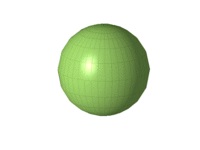}
     \end{minipage}
  \caption{Illustration of the isomorphism between the hemisphere $D^2_+$ with its boundary identified to a point and the 2-sphere $S^2$. Adapted from \cite{Image}.} \label{figure}
\end{figure}

\begin{proposition}\label{generator}
The generator of $K^1(S^{2n+1},\delta)$ can be represented by the pair of maps $h_{\pm}$ where $h_+$ is obtained by taking the generator $k \in \pi_{2n+1}(\Omega \Fred_{\OK})$ and viewing this as a map $D^{2n+1}_+ \to \Omega \Fred_{\OK}$ which is constant on the equatorial sphere via the isomorphism displayed in Figure \ref{figure}, and $h_-$ is a loop which remains constant at the identity operator.
\end{proposition}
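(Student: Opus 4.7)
The plan is to verify that $(h_+, h_-)$ satisfies the compatibility condition of Lemma \ref{K1sphere}, and then to identify the resulting class with a generator of $K^1(S^{2n+1}, \delta) \cong \ZZ_N$ via the long exact sequence of the pair $(S^{2n+1}, D^{2n+1}_-)$, matching the computation in Proposition \ref{spheres}.

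First, I would verify compatibility. The conjugation action (\ref{actionfred}) of $\Aut(\OK)$ on $\Fred_{\OK}$ fixes the identity operator $I$, and hence fixes the constant loop $I_{\mathrm{const}} \in \Omega\Fred_{\OK}$. By construction, $h_+$ descends from a pointed map $S^{2n+1} \cong D^{2n+1}_+/S^{2n} \to \Omega\Fred_{\OK}$ sending basepoint to $I_{\mathrm{const}}$, so $h_+|_{S^{2n}} \equiv I_{\mathrm{const}}$. Combined with $h_- \equiv I_{\mathrm{const}}$, this gives $f(x) \cdot h_-(x) = I_{\mathrm{const}} = h_+(x)$ for all $x \in S^{2n}$, so Lemma \ref{K1sphere} yields a well-defined class $[(h_+, h_-)] \in K^1(S^{2n+1}, \delta)$.

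Next, I would compute this class via the long exact sequence of the pair $(S^{2n+1}, D^{2n+1}_-)$. Since $\delta|_{D^{2n+1}_{\pm}}$ is trivial (the hemispheres being contractible), excision combined with the suspension isomorphism identifies
\[ K^1(S^{2n+1}, D^{2n+1}_-; \delta) \cong K^1(D^{2n+1}_+, S^{2n}) \cong \tilde K^1(S^{2n+1}) \cong \pi_{2n+1}(\Omega\Fred_{\OK}) \cong \ZZ, \]
under the Fredholm description of Theorem \ref{topological}, with the generator represented by the pointed map $h_+$ corresponding to $k$. Using $K^1(D^{2n+1}_-) = 0$, the pair sequence
\[ K^0(D^{2n+1}_-) \to K^1(S^{2n+1}, D^{2n+1}_-; \delta) \to K^1(S^{2n+1}, \delta) \to 0 \]
exhibits $K^1(S^{2n+1}, \delta)$ as a quotient of $\ZZ$, and the quotient map sends the generator $[h_+]$ to the class $[(h_+, I_{\mathrm{const}})]$ by construction.

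The principal obstacle will be matching this quotient with the $\ZZ_N$ computed in Proposition \ref{spheres}. Both arise from the decomposition $S^{2n+1} = D^{2n+1}_+ \cup_{S^{2n}} D^{2n+1}_-$: the Mayer--Vietoris sequence of Proposition \ref{spheres} comes from the short exact sequence $0 \to A_\delta \to C(D^{2n+1}_+ \amalg D^{2n+1}_-, \OK) \to C(S^{2n}, \OK) \to 0$, while the pair sequence comes from the inclusion $D^{2n+1}_- \hookrightarrow S^{2n+1}$, and naturality of the Fredholm reformulation in Theorem \ref{topological} shows the two are compatible up to isomorphism. Since Proposition \ref{spheres} already identifies $K^1(S^{2n+1}, \delta) \cong \ZZ_N$, the surjection $\ZZ \twoheadrightarrow \ZZ_N$ sends a generator to a generator, so $[(h_+, I_{\mathrm{const}})]$ generates $K^1(S^{2n+1}, \delta)$.
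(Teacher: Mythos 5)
Your proposal is correct and takes essentially the same route as the paper. The paper uses the short exact sequence of $C^*$-algebras coming from the decomposition $S^{2n+1} = \RR^{2n+1} \cup \{x_0\}$, yielding $0 \to \ZZ \to \ZZ \xrightarrow{\iota_*} K^1(S^{2n+1},\delta) \to 0$ with the middle group identified as $K^1(\RR^{2n+1}) = \widetilde{K}^1(S^{2n+1}) \cong \pi_{2n+1}(\Omega\Fred_{\OK})$; you instead use the pair $(S^{2n+1}, D^{2n+1}_-)$, but since $D^{2n+1}_-$ is a thickened point and its complement is the contractible $D^{2n+1}_+$, the two sequences are homotopically identical and produce the same surjection $\pi_{2n+1}(\Omega\Fred_{\OK}) \twoheadrightarrow K^1(S^{2n+1},\delta) \cong \ZZ_N$. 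Your explicit check that the pair $(h_+, h_-)$ satisfies the clutching compatibility $h_+(x) = f(x)\cdot h_-(x)$ on $S^{2n}$ — noting that conjugation by an automorphism fixes the constant loop at $I$ — is a worthwhile addition that the paper leaves implicit in its appeal to Lemma \ref{K1sphere}. The final "obstacle" you flag is in fact not one: once one knows $K^1(S^{2n+1},\delta) \cong \ZZ_N$ from Proposition \ref{spheres} and that the map from $\ZZ$ is surjective, any such map sends a generator to a generator, which is all both you and the paper really use.
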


\begin{proof}
In order to obtain a generator, we use a different short exact sequence of $C^*$-algebras to obtain a six-term exact sequence in $K$-theory. Here we take the sequence
\[ 0 \to C_0(\RR^{2n+1},\OK) \xrightarrow{\iota} A_{\delta} \xrightarrow{\pi} C(x_0, \OK) \to 0 \]
for $x_0 \in S^{2n+1}$ defined so that $S^{2n+1} \setminus \{x_0\} \cong \RR^{2n+1}$, with the obvious maps for $x_0 \notin S^{2n}$. Note that we may take sections of the trivial bundle over $\RR^{2n+1}$ since there are no non-trivial principal $\Aut(\OK)$-bundles over $\RR^{2n+1}$, and similarly for $\{x_0\}$. This gives rise to the six-term exact sequence
\[
\begin{tikzcd}
0 = K^0(\RR^{2n+1}) \ar{r}{\iota_*} &K^0(S^{2n+1},\delta) \ar{r}{\pi_*} &K^0(\{x_0\}) \ar{d}{\partial} = \ZZ \\
0 = K^1(\{x_0\}) \ar{u}{\partial} &K^1(S^{2n+1},\delta) \ar{l}{\pi_*} &K^1(\RR^{2n+1}) = \ZZ, \ar{l}{\iota_*}
\end{tikzcd}
\]
where twisted $K$-theory groups equipped with the trivial twisting have been identified with their untwisted counterparts, and this reduces to
\[0 \to \ZZ \xrightarrow{\partial} \ZZ \xrightarrow{\iota_*} K^1(S^{2n+1},\delta) \to 0. \]
By Proposition \ref{spheres} we know that $K^1(S^{2n+1},\delta) = \ZZ_N$ and so $\iota_*$ is a surjective map from $\ZZ$ to $\ZZ_N$. This means that it must be given by reduction modulo $N$ and hence the generator of $K^1(S^{2n+1},\delta)$ is the image of the generator of $K^1(\RR^{2n+1}) \cong \ZZ$ under $\iota_*$. The map $\iota_*$ can be interpreted by making the following identifications:
\begin{align*}
K^1(\RR^{2n+1}) &= \widetilde{K}^1(S^{2n+1}) \\
&\cong K^1(S^{2n+1}) \\
&\cong [S^{2n+1},\Omega \Fred_{\OK}] \\
&= \pi_{2n+1}(\Omega \Fred_{\OK}),
\end{align*}
where we use the fact that the ordinary topological $K$-theory of $S^{2n+1}$ is the same as the higher twisted $K$-theory of $S^{2n+1}$ with the trivial twist.

In order to realise the reduction modulo $N$ map from $\pi_{2n+1}(\Omega \Fred_{\OK})$ into $K^1(S^{2n+1},\delta)$, we let $[k: S^{2n+1} \to \Omega \Fred_{\OK}] \in \pi_{2n+1}(\Omega \Fred_{\OK})$ be the generator and by identifying $S^{2n+1}$ with $D^{2n+1}_+ /\sim$ as illustrated in Figure \ref{figure}, we view $k$ as a map $h_+$ on $D^{2n+1}_+$ which is constant at the identity on the equatorial sphere. Then defining a map $h_-$ on $D^{2n+1}_-$ to be a loop which is constant at the identity gives a pair $[h_{\pm}] \in K^1(S^{2n+1},\delta)$ via Lemma \ref{K1sphere}. Applying this process with $M$ times the generator of $\pi_{2n+1}(\Omega \Fred_{\OK})$ yields an element of $K^1(S^{2n+1},\delta)$ which is $M$ mod $N$ times the generator. Thus the generator of $K^1(S^{2n+1},\delta)$ is obtained by applying this process to the generator $k$ itself as required.
\end{proof}

Note that the choice of generator of $\pi_{2n+1}(\Omega \Fred_{\OK})$ is once again unimportant here, and different choices will simply yield different generators of $\ZZ_N$.

This formulation of the generator agrees with and extends that given by Mickelsson in the classical twisted setting \cite{Mickelsson}, and the existence of such an explicit generator in terms of the generator of $\pi_{2n+1}(\Omega \Fred_{\OK})$ may have a physical interpretation which could be used to further investigate relevant areas of physics.

It should be noted that obtaining explicit expressions for the generators of higher twisted $K$-theory groups is difficult in general. In this case we relied on applying the Mayer--Vietoris sequence as well as a useful identification of a topological $K$-theory group with a homotopy group. This will not be possible in other cases, leaving potential for future work in finding more general methods to express the generators of higher twisted $K$-theory groups.

To complete our computations for odd-dimensional spheres, we provide a more straightforward proof of Proposition \ref{spheres} using the twisted Atiyah--Hirzebruch spectral sequence. Recall that the Mayer--Vietoris proof of Proposition \ref{spheres} was used in the proof of Theorem \ref{HTSS}; this example is simply to illustrate the use of the spectral sequence rather than to provide an alternative proof.

\begin{example}
We use the spectral sequence in Theorem \ref{AHstatement}. Since $H^p(S^{2n+1},\ZZ) \cong \ZZ$ if $p=0$ or $2n+1$ and the cohomology is trivial otherwise, we see that $E^{p,q}_2 \cong E^{p,q}_r$ for $2 \leq r \leq 2n$. The only non-zero differential is then $d_{2n+1}: H^0(S^{2n+1},\ZZ) \to H^{2n+1}(S^{2n+1},\ZZ)$, as displayed on the left side of Figure \ref{sphereSS}.
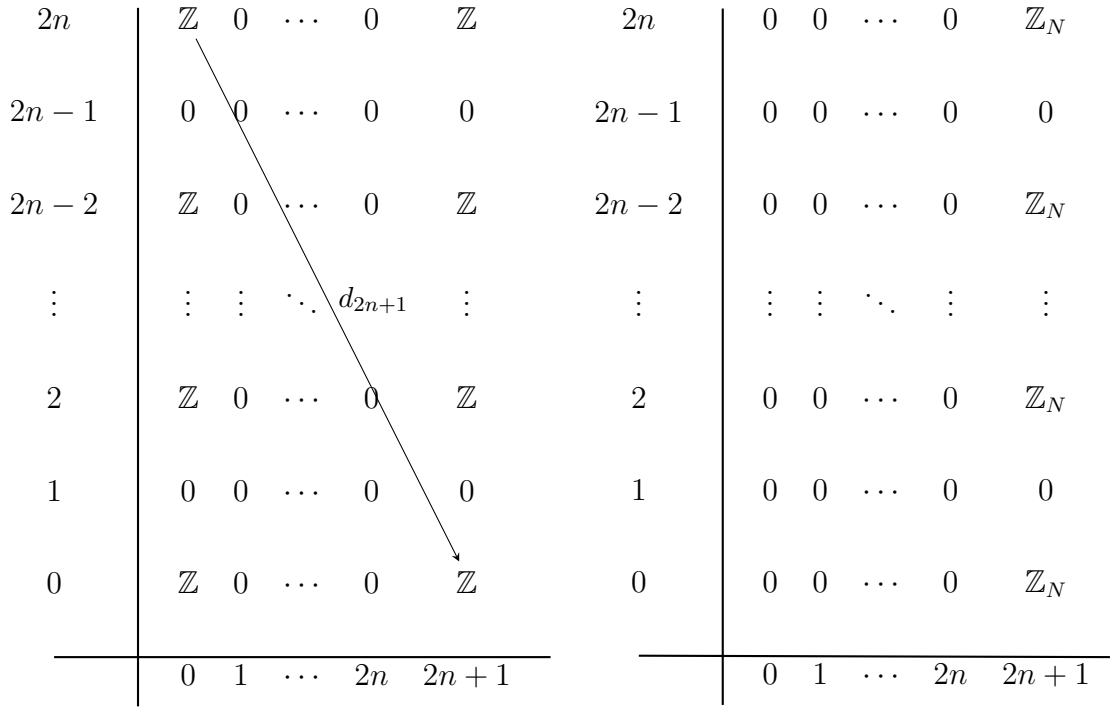
\begin{figure}
\begin{center}
\begin{subfigure}{0.45\linewidth}
\begin{tikzpicture}
  \matrix (m) [matrix of math nodes,
    nodes in empty cells,nodes={minimum width=2.3ex,
    minimum height=5ex,outer sep=-5pt},
    column sep=1ex,row sep=1ex]{
		2n    	& &	\ZZ  & 0 & \cdots & 0  & \ZZ & \\
		2n-1     	& &	0 &  0  & \cdots & 0 & 0 & \\
		2n-2    	& &	\ZZ  & 0 & \cdots & 0  & \ZZ & \\
    		\vdots	& &	\vdots & \vdots & \ddots &  & \vdots & \\
		2    	& &	\ZZ  & 0 & \cdots & 0  & \ZZ & \\
		1     	& &	0 &  0  & \cdots & 0 & 0 & \\
		0    	& &	\ZZ  & 0 & \cdots & 0  & \ZZ & \\
    \quad\strut & &  0  &  1  &  \cdots & 2n & 2n+1 & \strut \\};
  \draw[-stealth] (m-1-3.south east) -- node[above,right] {\small{$d_{2n+1}$}} (m-7-7.north west);
\draw[thick] (m-1-2.north) -- (m-8-2.south) ;
\draw[thick] (m-8-1.north) -- (m-8-8.north) ;
\end{tikzpicture}
\end{subfigure}
\,
\begin{subfigure}{0.45\linewidth}
\begin{tikzpicture}
  \matrix (m) [matrix of math nodes,
    nodes in empty cells,nodes={minimum width=2.3ex,
    minimum height=5ex,outer sep=-5pt},
    column sep=1ex,row sep=1ex]{
		2n    	&&0  & 0 & \cdots & 0  & \ZZ_N \\
		2n-1     	&&0 &  0  & \cdots & 0 & 0 \\
		2n-2    	&&0  & 0 & \cdots & 0  & \ZZ_N \\
    		\vdots	&&\vdots & \vdots & \ddots & \vdots & \vdots \\
		2    	&&0  & 0 & \cdots & 0  & \ZZ_N \\
		1     	&&0 &  0  & \cdots & 0 & 0 \\
		0    	&&0  & 0 & \cdots & 0  & \ZZ_N \\
    \strut &&  0  &  1  &  \cdots & 2n & 2n+1 \strut \\};
\draw[thick] (m-1-2.north) -- (m-8-2.south) ;
\draw[thick] (m-8-1.north) -- (m-8-8.north) ;
\end{tikzpicture}
\end{subfigure}
\end{center} \caption{Atiyah--Hirzebruch spectral sequence for $S^{2n+1}$.} \label{sphereSS}
\end{figure}
By Theorem \ref{HTSS}, this differential is given by $d_{2n+1}(x) = d_{2n+1}'(x) - x \cup \delta$ where $d_{2n+1}'$ is some torsion operator, i.e.\ the image of $d_{2n+1}'$ is torsion. Thus the differential is simply cup product with $-\delta$, meaning that the $E_{2n+1} \cong \cdots \cong E_{\infty}$ term is as shown on the right of Figure \ref{sphereSS}. Then by the convergence of the spectral sequence, we may once again conclude that $K^0(S^{2n+1},\delta)= 0$ while $K^1(S^{2n+1},\delta) \cong \ZZ_N$.
\end{example}

While this computation is much more manageable, it cannot be used to obtain any information about the generators of the higher twisted $K$-theory groups. Due to the simplicity of this method, however, it will prove useful in computing the higher twisted $K$-theory of more complicated spaces.

\subsection{Products of spheres}

We may apply the same techniques to compute the higher twisted $K$-theory of products of spheres. In theory, it is possible to consider any product of spheres consisting of at least one odd-dimensional sphere, since the product will have torsion-free cohomology and non-trivial cohomology in at least one odd degree. In practice, however, without developing more general techniques we are limited to a smaller class of products.

One such product that we can compute via the same methods as used to prove Proposition \ref{spheres} is $S^{2m} \times S^{2n+1}$ for $m, n \geq 1$. This space has non-trivial odd-degree cohomology groups in degrees $2n+1$ and $2m+2n+1$, both of which are isomorphic to $\ZZ$. The clutching construction is not applicable to twists coming from $(2m+2n+1)$-classes, as these would produce bundles over $S^{2m+2n+1}$ as opposed to $S^{2m} \times S^{2n+1}$. In spite of this, a modified version of the clutching construction can be used for twists of degree $2n+1$. We take trivial bundles over $S^{2m} \times D^{2n+1}_{\pm}$ and modify the gluing map $[f] \in \pi_{2n}(\Aut(\OK))) \cong H^{2n+1}(S^{2n+1},\ZZ)$ to a map $\widetilde{f}: S^{2m} \times S^{2n} \to \Aut(\OK)$ which is constant over the $S^{2m}$ factor, i.e.\ $\widetilde{f}(x,y) = f(y)$. The resulting bundle constructed using this gluing map will then pull back to the trivial bundle over $S^{2m}$ and to the usual clutching bundle associated to $f$ over $S^{2n+1}$. Using this explicit bundle, the Mayer--Vietoris sequence can be used to compute the higher twisted $K$-theory groups. We will not provide this Mayer--Vietoris proof; in this case the proof is almost identical to the proof of Proposition \ref{spheres}. For the full details, see the proof of Proposition 5.1.4 in \cite{Brook}. Instead, we use a K\"unneth theorem in $C^*$-algebraic $K$-theory to illustrate another method of computation.

\begin{proposition}\label{products}
Let $\delta \in H^{2n+1}(S^{2m} \times S^{2n+1},\ZZ)$ be a twist of $K$-theory for $S^{2m} \times S^{2n+1}$ which is $N$ times the generator, where $m, n \geq 1$. The higher twisted $K$-theory of $S^{2m} \times S^{2n+1}$ is then
\[ K^0(S^{2m} \times S^{2n+1}, \delta) = 0 \quad \text{and} \quad K^1(S^{2m} \times S^{2n+1},\delta) = \ZZ_N \oplus \ZZ_N \]
if $N \neq 0$, or
\[ K^0(S^{2m} \times S^{2n+1}) = \ZZ \oplus \ZZ \quad \text{and} \quad K^1(S^{2m} \times S^{2n+1}) = \ZZ \oplus \ZZ \]
when $N=0$.
\end{proposition}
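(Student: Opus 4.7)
The plan is to use the K\"unneth theorem in $C^*$-algebraic $K$-theory applied to a tensor product decomposition of the section algebra, exploiting that the twist $\delta$ is pulled back from the $S^{2n+1}$ factor. First I would make the bundle construction from the paragraph preceding the proposition precise: given a gluing map $f \colon S^{2n} \to \Aut(\OK)$ of degree $N$, the modified gluing map $\widetilde{f}(x,y) = f(y)$ on $S^{2m} \times S^{2n}$ yields a principal $\Aut(\OK)$-bundle over $S^{2m} \times S^{2n+1}$ whose pullback to any slice $\{*\} \times S^{2n+1}$ recovers the bundle $\cE_f$ of Proposition \ref{spheres} and whose pullback to any $S^{2m} \times \{*\}$ is trivial. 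Since both factors have torsion-free cohomology, Theorem \ref{cohomology} guarantees that this bundle represents exactly $N$ times the generator of $H^{2n+1}(S^{2m} \times S^{2n+1},\ZZ)$.

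Next I would observe that because the gluing datum $\widetilde{f}$ is constant in the $S^{2m}$ coordinate, the associated algebra bundle $\cA_{\delta}$ is the external product $S^{2m} \times \cA_{\delta_0}$, where $\cA_{\delta_0}$ is the clutching algebra bundle over $S^{2n+1}$ corresponding to $N$ times the generator of $H^{2n+1}(S^{2n+1},\ZZ)$. Consequently the section algebra splits as
\[ C(S^{2m} \times S^{2n+1}, \cA_{\delta}) \cong C(S^{2m}) \otimes A_{\delta_0}, \]
with $A_{\delta_0} = C(S^{2n+1}, \cA_{\delta_0})$ and the tensor product taken in the category of nuclear $C^*$-algebras (there is no ambiguity since $C(S^{2m})$ is nuclear).

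The main computational step is then to invoke the K\"unneth theorem of Schochet for $C^*$-algebraic $K$-theory. Since $C(S^{2m})$ lies in the bootstrap category, we obtain a (split) short exact sequence
\[ 0 \to \bigoplus_{i+j = n} K_i(C(S^{2m})) \otimes K_j(A_{\delta_0}) \to K_n(C(S^{2m}) \otimes A_{\delta_0}) \to \bigoplus_{i+j = n-1} \Tor(K_i(C(S^{2m})), K_j(A_{\delta_0})) \to 0. \]
Substituting $K_0(C(S^{2m})) = \ZZ \oplus \ZZ$, $K_1(C(S^{2m})) = 0$, together with the values $K_0(A_{\delta_0}) = 0$ and $K_1(A_{\delta_0}) = \ZZ_N$ for $N \neq 0$ (respectively $\ZZ$ when $N = 0$) obtained in Proposition \ref{spheres}, the $\Tor$ contributions vanish because $K_*(C(S^{2m}))$ is torsion-free, and the remaining tensor products yield exactly $K^0 = 0$, $K^1 = \ZZ_N \oplus \ZZ_N$ in the twisted case and $K^0 = K^1 = \ZZ \oplus \ZZ$ in the untwisted case.

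The main obstacle is rigorously justifying the tensor product decomposition of the section algebra, namely verifying that the clutching construction is compatible with external products when the gluing datum is constant in one factor. This amounts to exhibiting a canonical $C^*$-isomorphism between sections of $\cA_{\delta}$ and elementary tensors $\phi \otimes \sigma$ with $\phi \in C(S^{2m})$ and $\sigma \in A_{\delta_0}$, and checking that it extends to the completed tensor product. Once this identification is in place the $K$-theory computation is a mechanical application of the K\"unneth formula.
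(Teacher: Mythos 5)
Your proof is correct and takes essentially the same route as the paper: both decompose the section algebra of the twist bundle as $C(S^{2m}) \otimes A_{\delta_0}$ using the fact that the gluing map $\widetilde{f}$ is constant in the $S^{2m}$ factor, and then apply the K\"unneth theorem in $C^*$-algebraic $K$-theory together with the values from Proposition \ref{spheres}, with the $\Tor$ term vanishing because $K^*(S^{2m})$ is torsion-free.
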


\begin{proof}
We use the K\"unneth theorem given in Theorem 23.1.3 of \cite{BlackK}, which states that
\[ 0 \to K_*(A) \otimes K_*(B) \to K_*(A \otimes B) \to \Tor_1^{\ZZ}(K_*(A), K_*(B)) \to 0 \]
is a short exact sequence if $A$ belongs in the bootstrap category of $C^*$-algebras defined in Definition 22.3.4 of \cite{BlackK}. We let $A$ denote the continuous complex-valued functions on $S^{2m}$ -- which belongs in the bootstrap category as do all commutative $C^*$-algebras -- so that $K_*(A) = K^*(S^{2m})$, and we take $B$ to be the algebra of sections of the algebra bundle over $S^{2n+1}$ representing the twist $\delta$ so that $K_*(B) = K^*(S^{2n+1},\delta)$. Since $K_*(A)$ is torsion-free, the $\Tor_1$ term will be trivial and thus we obtain an isomorphism $K_*(A) \otimes K_*(B) \cong K_*(A \otimes B)$. Now, since the algebra bundle $\cA_{\widetilde{f}}$ is trivial over the factor of $S^{2m}$, the sections of the bundle can be split into
\[ C(S^{2m} \times S^{2n+1}, \cA_{\widetilde{f}}) = C(S^{2m},\OK) \otimes C(S^{2n+1},\cA_{\widetilde{f}}|_{S^{2n+1}}) \cong C(S^{2m}) \otimes C(S^{2n+1},\cA_{f}). \]
Therefore $A \otimes B$ is isomorphic to the space of sections of $\cA_{\widetilde{f}}$, and so we conclude that $K_*(A \otimes B) = K^*(S^{2m} \times S^{2n+1},\delta)$. Hence the isomorphism given by the K\"unneth theorem verifies that
\[ K^0(S^{2m} \times S^{2n+1},\delta) \cong ((\ZZ \oplus \ZZ) \otimes 0) \oplus (0 \otimes \ZZ_N) = 0 \]
and
\[ K^1(S^{2m} \times S^{2n+1},\delta) \cong ((\ZZ \oplus \ZZ) \otimes \ZZ_N) \oplus (0 \otimes 0) = \ZZ_N \oplus \ZZ_N \]
when $N \neq 0$ as required. For the case $N=0$ the K\"unneth theorem can be applied directly to the topological $K$-theory groups of $S^{2m}$ and $S^{2n+1}$.
\end{proof}

Note that in this case we have two different generators of order $N$ for the $K^1$-group, and so it would be of interest to explicitly write down these generators. The Mayer--Vietoris technique used for $S^{2n+1}$, however, does not generalise to this case and so this would require the development of further machinery.

Although twists of degree $2m+2n+1$ do not have such a simple geometric interpretation, the computation can easily be carried out using the Atiyah--Hirzebruch spectral sequence to obtain the following.

\begin{proposition}\label{products2}
Let $\delta \in H^{2m+2n+1}(S^{2m} \times S^{2n+1},\ZZ)$ be a higher twist of $K$-theory for $S^{2m} \times S^{2n+1}$ which is $N$ times the generator, where $m, n \geq 1$. The higher twisted $K$-theory of $S^{2m} \times S^{2n+1}$ is then
\[ K^0(S^{2m} \times S^{2n+1}, \delta) = \ZZ \quad \text{and} \quad K^1(S^{2m} \times S^{2n+1},\delta) = \ZZ \oplus \ZZ_N \]
if $N \neq 0$, or
\[ K^0(S^{2m} \times S^{2n+1}) = \ZZ \oplus \ZZ \quad \text{and} \quad K^1(S^{2m} \times S^{2n+1}) = \ZZ \oplus \ZZ \]
when $N=0$.
\end{proposition}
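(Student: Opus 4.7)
The plan is to run the Atiyah--Hirzebruch spectral sequence of Theorem \ref{AHstatement} directly on $X = S^{2m} \times S^{2n+1}$. First I would use the K\"unneth formula to note that $H^*(X,\ZZ)$ is concentrated in degrees $0$, $2m$, $2n+1$ and $2m+2n+1$, with each group isomorphic to $\ZZ$ and with top generator obtained as the cup product of the two sphere classes. Since $K^q(\mathrm{pt}) = \ZZ$ for $q$ even and vanishes for $q$ odd, the $E_2$-term $E_2^{p,q} = H^p(X,K^q(\mathrm{pt}))$ is supported on four columns within the even rows.

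Next I would argue that every differential strictly before $d_{2m+2n+1}$ is zero. The even differentials $d_{2r}$ vanish because they land in the odd-$q$ rows, which are zero. The odd differentials $d_{2r+1}$ with $2r+1 < 2m+2n+1$ are torsion operators by \cite{Arlettaz} (they coincide with the differentials of the untwisted Atiyah--Hirzebruch spectral sequence, since the twist correction in Theorem \ref{HTSS} only appears at $d_{2m+2n+1}$), and a torsion operator on the torsion-free groups $H^p(X,\ZZ)$ must vanish. Hence $E_{2m+2n+1} \cong E_2$.

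Then I would apply Theorem \ref{HTSS}: the differential $d_{2m+2n+1}$ is given by $x \mapsto d_{2m+2n+1}'(x) - x \cup \delta$, and $d'$ is again a torsion operator on torsion-free cohomology and thus zero. Cup product with $\delta$ is non-trivial on exactly one column: it sends $H^0(X,\ZZ) = \ZZ$ isomorphically onto $N \cdot H^{2m+2n+1}(X,\ZZ)$, and vanishes on $H^{2m}$, $H^{2n+1}$, $H^{2m+2n+1}$ for dimensional reasons (all possible targets lie above the top dimension). On $E_\infty$ this leaves $E_\infty^{0,q}=0$, $E_\infty^{2m,q} = \ZZ$, $E_\infty^{2n+1,q}=\ZZ$ and $E_\infty^{2m+2n+1,q} = \ZZ_N$ in every even row $q$, for $N \neq 0$.

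Finally I would read off the associated graded: the $p$-even columns contribute $\ZZ$ to $K^0(X,\delta)$, and the $p$-odd columns contribute $\ZZ$ and $\ZZ_N$ to $K^1(X,\delta)$ in filtration degrees $2n+1$ and $2m+2n+1$ respectively. The extension $0 \to \ZZ_N \to K^1(X,\delta) \to \ZZ \to 0$ splits because $\ZZ$ is free, giving $K^1(X,\delta) \cong \ZZ \oplus \ZZ_N$. For $N=0$ no differential is ever non-zero, so $E_\infty = E_2$ and the same splitting argument yields $K^0 = K^1 = \ZZ \oplus \ZZ$. The only real obstacle here is verifying that no higher torsion differentials sneak in before $d_{2m+2n+1}$; this is handled by the Arlettaz torsion bound combined with the torsion-freeness of $H^*(S^{2m}\times S^{2n+1},\ZZ)$, and the only extension problem is automatic.
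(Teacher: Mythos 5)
Your proposal is correct and is exactly the argument the paper has in mind: the paper's proof consists of the single sentence that the result is ``a straightforward application of Theorem \ref{HTSS} with no extension problems,'' and your write-up simply spells out that application in full — supporting columns from the K\"unneth theorem, vanishing of all lower differentials because the untwisted Atiyah--Hirzebruch differentials are torsion operators acting on torsion-free groups, the single nonzero $d_{2m+2n+1}$ given by multiplication by $\pm N$, and the automatic splitting of the $K^1$ extension since $\ZZ$ is free.
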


The proof is a straightforward application of Theorem \ref{HTSS} with no extension problems. We note that this computation cannot be verified using the K\"unneth theorem because we cannot express the space of sections of the algebra bundle representing $\delta \in H^{2m+2n+1}(S^{2m} \times S^{2n+1},\ZZ)$ as a tensor product as we were able to in the previous case. Suppose we were to decompose $\delta$ into the cup product of $\delta_{2m} \in H^{2m}(S^{2m},\ZZ)$ and $\delta_{2n+1} \in H^{2n+1}(S^{2n+1},\ZZ)$ via a K\"unneth theorem in cohomology, and then we let $A$ be the space of sections of the bundle over $S^{2m}$ represented by $\delta_{2m}$ and $B$ be the space of sections of the bundle over $S^{2n+1}$ represented by $\delta_{2n+1}$. Then $A$ and $B$ would be exactly as in the proof of Proposition \ref{products}, since there are no non-trivial algebra bundles over $S^{2m}$ with fibres isomorphic to $\OK$, and so the tensor product algebra $A \otimes B$ would remain unchanged.

Of course there are many other possible products of spheres that can be investigated, and the spectral sequence can be used in a straightforward way to draw conclusions about the higher twisted $K$-theory groups. In spite of this, most cases involve non-trivial extension problems and so it is difficult to obtain results for products of spheres in full generality using current techniques.

\subsection{Special unitary groups}

Another particularly useful class of spaces with torsion-free cohomology is formed by certain types of Lie groups. A great deal of work has been done by many mathematicians and physicists in computing the twisted $K$-theory of Lie groups in the classical setting, including Hopkins, Braun \cite{Braun}, Douglas \cite{Douglas}, Rosenberg \cite{RosenbergLie} and Moore \cite{Moore}. In the case of $SU(n)$, the twisted $K$-groups were explicitly computed and as a consequence it was shown that the higher differentials in the twisted Atiyah--Hirzebruch spectral sequence are non-zero in general, suggesting that this technique will not yield general results for the higher twisted $K$-groups of $SU(n)$. Nevertheless, it is possible to compute these groups via the Atiyah--Hirzebruch spectral sequence in a special case, and to use this in more general computations.

We compute the higher twisted $K$-theory of $SU(n)$ up to extension problems for $\delta$ a $(2n-1)$-twist. Although this does not fully describe the higher twisted $K$-theory groups, it gives important information regarding torsion and the maximum order of elements in the groups. To illustrate the general technique, we will start by computing the higher twisted $K$-theory of $SU(3)$ for a $5$-twist. Note that to simplify the statements of results we will henceforth only consider non-trivial twists, as the trivial case reduces to topological $K$-theory.

\begin{lemma}\label{SU(3)}
Let $\delta \in H^{5}(SU(3),\ZZ)$ be a twist of $K$-theory for $SU(3)$ which is $N \neq 0$ times the generator. The 5-twisted $K$-theory of $SU(3)$ is then
\[ K^0(SU(3), \delta) = \ZZ_N \quad \text{and} \quad K^1(SU(3),\delta) = \ZZ_N. \]
\end{lemma}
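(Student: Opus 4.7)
The plan is to apply the Atiyah--Hirzebruch spectral sequence of Theorem \ref{AHstatement} to $SU(3)$, using that $SU(3)$ has torsion-free cohomology so that Theorem \ref{cohomology} guarantees the twist $\delta$ is genuinely an integral $5$-class. Recall that as a graded ring $H^*(SU(3),\ZZ)$ is an exterior algebra on generators $x_3 \in H^3$ and $x_5 \in H^5$, so the only nonzero integral cohomology groups are $H^0 \cong H^3 \cong H^5 \cong H^8 \cong \ZZ$, generated by $1$, $x_3$, $x_5$, and $x_3 \cup x_5$ respectively. Thus the $E_2$-page has $\ZZ$'s in the four columns $p \in \{0,3,5,8\}$ along every even row $q$, and vanishes elsewhere.

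Next I would analyse the differentials. Since $H^*(SU(3),\ZZ)$ is torsion-free, every torsion-valued operation on it vanishes; in particular the $d_3$ differential (which is $Sq^3$ on integral cohomology) is zero, and the untwisted operator $d_5'$ appearing in Theorem \ref{HTSS} is zero. By Theorem \ref{HTSS} the twisted differential therefore simplifies to $d_5(x) = -x \cup \delta = -Nx \cup x_5$. On the four surviving columns this gives: $d_5\colon H^0 \to H^5$ is multiplication by $-N$; $d_5\colon H^3 \to H^8$ is multiplication by $-N$; $d_5$ out of columns $p = 5$ and $p = 8$ lands in degrees $\geq 10$ and is therefore zero. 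All differentials $d_r$ with $r \geq 6$ vanish for dimension reasons, as their targets lie in cohomological degree $> 8$. Consequently $E_6 = E_\infty$.

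Reading off the $E_\infty$-page, the only surviving entries are $E_\infty^{5,q} \cong E_\infty^{8,q} \cong \ZZ_N$ for every even $q$, with all other entries zero. For $K^0(SU(3),\delta)$ the only $(p,q)$ with $p+q = 0$, $q$ even, and $E_\infty^{p,q} \neq 0$ is $(8,-8)$, contributing $\ZZ_N$. For $K^1(SU(3),\delta)$ the only relevant pair is $(5,-4)$, again contributing $\ZZ_N$. Because in each degree exactly one filtration quotient is nonzero, there is no extension problem, and we conclude
\[
K^0(SU(3),\delta) \cong \ZZ_N, \qquad K^1(SU(3),\delta) \cong \ZZ_N.
\]

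I do not foresee a serious obstacle here: the main ingredients are the transparent cohomology ring of $SU(3)$, the explicit form of the $d_5$ differential from Theorem \ref{HTSS}, and the fortunate absence of extension problems. The only subtlety is verifying that no higher differential $d_r$ with $r \geq 6$ contributes and that the non-twisted $d_3$ genuinely vanishes on torsion-free cohomology; both are immediate from the dimension of $SU(3)$ and from Arlettaz's torsion bound on the Atiyah--Hirzebruch differentials (which makes $d_r'$ torsion-valued for all $r$). The heart of the argument is simply pinning down the $d_5$-images, which reduces to multiplication by $N$ via cup product with $x_5$.
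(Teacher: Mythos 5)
Your proposal is correct and follows essentially the same route as the paper: both run the twisted Atiyah--Hirzebruch spectral sequence of Theorem \ref{AHstatement}, identify the $E_2$-page from the exterior-algebra cohomology of $SU(3)$, kill $d_3$ and the torsion part $d_5'$ by torsion-freeness, compute $d_5$ as cup product with $-\delta$ (multiplication by $-N$ from $H^0\to H^5$ and $H^3\to H^8$), and read off $\ZZ_N$ in columns $5$ and $8$ with no extension issues. The only cosmetic difference is that the paper invokes Hatcher's Theorem 4L.12 and the $2$-torsion of $Sq^3$ to kill $d_3$, while you observe directly that any torsion-valued differential dies on a torsion-free target — an equivalent and arguably cleaner observation.
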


\begin{proof}
The $E_2$-page of the twisted Atiyah--Hirzebruch spectral sequence is as follows.
\begin{center}
\begin{tikzpicture}
  \matrix (m) [matrix of math nodes,
    nodes in empty cells,nodes={minimum width=2.3ex,
    minimum height=5ex,outer sep=-5pt},
    column sep=1ex,row sep=1ex]{
		2    	& &	\ZZ  & 0 & 0 & \ZZ & 0 & \ZZ & 0 & 0 & \ZZ &\\
		1     	& &   0 &  0  & 0 & 0 & 0 & 0 & 0 & 0 & 0 &\\
		0    	& &	\ZZ  & 0 & 0 & \ZZ & 0 & \ZZ & 0 & 0 & \ZZ &\\
     \strut & &  0  &  1  &  2 & 3 & 4 & 5 & 6 & 7 & 8 & \strut \\};
\draw[thick] (m-1-2.north) -- (m-4-2.south) ;
\draw[thick] (m-4-1.north) -- (m-4-12.north) ;
\end{tikzpicture}
\end{center}

The $d_3$ differential is given by the Steenrod operation $Sq^3$ which necessarily annihilates $H^0(SU(3),\ZZ)$ by Theorem 4L.12 of \cite{HatcherAT} and also annihilates $H^5(SU(3),\ZZ)$ since the image of $Sq^3$ is a 2-torsion element by definition. Hence the only non-trivial differential in this spectral sequence is $d_5(x) = d_5'(x) - x \cup \delta$. The torsion operator $d_5'$ will have no effect on the cohomology, and cup product with $-\delta$ will be multiplication by $-N$ on both $H^0(SU(3),\ZZ)$ and $H^3(SU(3),\ZZ)$. Hence the $E_{\infty}$-term is as shown below.
\begin{center}
\begin{tikzpicture}
  \matrix (m) [matrix of math nodes,
    nodes in empty cells,nodes={minimum width=2.3ex,
    minimum height=5ex,outer sep=-5pt},
    column sep=1ex,row sep=1ex]{
		2    	& &	0  & 0 & 0 & 0 & 0 & \ZZ_N & 0 & 0 & \ZZ_N &\\
		1     	& &   0 &  0  & 0 & 0 & 0 & 0 & 0 & 0 & 0 &\\
		0    	& &	0  & 0 & 0 & 0 & 0 & \ZZ_N & 0 & 0 & \ZZ_N &\\
     \strut & &  0  &  1  &  2 & 3 & 4 & 5 & 6 & 7 & 8 & \strut \\};
\draw[thick] (m-1-2.north) -- (m-4-2.south) ;
\draw[thick] (m-4-1.north) -- (m-4-12.north) ;
\end{tikzpicture}
\end{center}
Thus we may conclude that $K^0(SU(3),\delta) \cong \ZZ_N$ and $K^1(SU(3),\delta) \cong \ZZ_N$ as required.
\end{proof}

Note that this computation for $SU(3)$ works specifically for $5$-twists $\delta$, as unlike when taking a classical $3$-twist there are no non-trivial higher differentials to consider. Furthermore, there are no extension problems to solve and so this is a complete computation. The method of computation directly generalises to the case of $(2n-1)$-twists on $SU(n)$, although here we only obtain the result up to extension problems and so we can only comment on torsion in the group.

\begin{lemma}\label{SU(n)}
Let $\delta \in H^{2n-1}(SU(n),\ZZ)$ be a twist of $K$-theory for $SU(n)$ which is $N \neq 0$ times the generator. The $(2n-1)$-twisted $K$-theory of $SU(n)$ is then a finite abelian group with all elements having order a divisor of a power of $N$.
\end{lemma}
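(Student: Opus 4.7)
The plan is to run the higher twisted Atiyah--Hirzebruch spectral sequence of Theorem \ref{AHstatement} along the lines of Lemma \ref{SU(3)}, taking advantage of the fact that $H^*(SU(n),\ZZ)$ is an exterior algebra and therefore torsion-free. Recall that $H^*(SU(n),\ZZ) \cong \Lambda_{\ZZ}[x_3, x_5, \ldots, x_{2n-1}]$ with $x_{2k+1}$ a primitive generator in degree $2k+1$, and $\delta = N x_{2n-1}$. So the $E_2$-page has entries $H^p(SU(n),\ZZ)$ in even rows and zeros in odd rows.

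First I would dispense with all differentials $d_r$ for $r < 2n-1$. These agree with the differentials of the untwisted Atiyah--Hirzebruch spectral sequence for $K^*(SU(n))$ (since Theorem \ref{HTSS} shows that the twist only modifies $d_{2n-1}$), and in that untwisted setting the differentials are torsion operators in the sense of Arlettaz \cite{Arlettaz}. Because every entry of $E_2$ is torsion-free, any torsion operator between two such entries must be zero, so $E_{2n-1} = E_2$. Next I would apply Theorem \ref{HTSS} to obtain $d_{2n-1}(x) = d_{2n-1}'(x) - x \cup \delta$. Again $d_{2n-1}'$ is a torsion operator between torsion-free groups and therefore vanishes on $E_{2n-1}$, so $d_{2n-1} = -(-) \cup (N x_{2n-1})$.

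Now I would compute the kernel and cokernel of cup product with $N x_{2n-1}$ on $\Lambda_{\ZZ}[x_3, \ldots, x_{2n-1}]$. Writing $A = \Lambda_{\ZZ}[x_3, \ldots, x_{2n-3}]$, we have a $\ZZ$-module decomposition $H^*(SU(n),\ZZ) = A \oplus A \cdot x_{2n-1}$, under which $d_{2n-1}$ sends $a \mapsto -Na \cdot x_{2n-1}$ on the $A$-summand and sends $A \cdot x_{2n-1}$ to zero (as $x_{2n-1}^2=0$). Since $A$ is torsion-free, multiplication by $N$ is injective on $A$, so the kernel is $A \cdot x_{2n-1}$ and the image is $NA \cdot x_{2n-1}$. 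Hence
\[
E_{2n} \;\cong\; (A/NA) \cdot x_{2n-1},
\]
laid out with the appropriate $2$-periodicity in the $K^q(pt)$ direction. This is a finite abelian group in which every element is annihilated by $N$.

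Finally I would observe that the $E_{2n}$-page is already $N$-torsion and finite (finitely generated and $N$-annihilated), and any further differential is a group homomorphism whose domain is $N$-torsion, so $E_{r}$ for $r \geq 2n$ is a subquotient of an $N$-torsion group and therefore $N$-torsion and finite. Passing to $E_{\infty}$, the associated graded of the filtration on $K^*(SU(n),\delta)$ is a direct sum of $N$-torsion finite groups. The spectral sequence converges strongly by Theorem \ref{AHstatement} and the filtration is finite (its length is bounded by the cohomological dimension of $SU(n)$), so $K^*(SU(n),\delta)$ is a finite iterated extension of $N$-torsion groups; every element therefore has order dividing $N^m$ for some $m$ depending only on the length of the filtration. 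The only subtlety to attend to is that although each subquotient is annihilated by $N$, extensions can raise the exponent, so one must phrase the conclusion as "order dividing a power of $N$" rather than "order dividing $N$", which is exactly what the lemma asserts.
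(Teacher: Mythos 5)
Your proof is correct and follows essentially the same route as the paper: run the twisted Atiyah--Hirzebruch spectral sequence, kill the differentials $d_r$ for $r<2n-1$ as torsion operators acting between torsion-free groups, identify $d_{2n-1}$ with cup product by $-\delta$, split $H^*(SU(n),\ZZ) = A \oplus A\cdot x_{2n-1}$ with $A = \Lambda_\ZZ[x_3,\dots,x_{2n-3}]$, and conclude $E_{2n} \cong A/NA$, then invoke the finite filtration to bound element orders by a power of $N$. The one place you improve on the paper's exposition is the handling of differentials $d_r$ for $r \geq 2n$: the paper simply asserts these vanish, whereas you observe (correctly, and more robustly) that this is not needed, since once a page is finite and annihilated by $N$ every subsequent page is a subquotient of it and therefore also finite and $N$-torsion; this is the cleaner way to phrase the step and removes an unjustified claim.
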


\begin{proof}
We use the same Atiyah--Hirzebruch spectral sequence approach as in the proof of Lemma \ref{SU(3)}.
The differentials $d_j$ for $j < 2n-1$ are trivial, as they are given by torsion operations. The differential $d_{2n-1}$ is cup product with $-\delta$, which is multiplication by $-N$ for each of the $2^{n-2}$ maps $\ZZ (\bigwedge c_i) \to \ZZ (\bigwedge c_i) \wedge c_{2n-1}$ where the $c_{2i-1} \in H^{2i-1}(SU(n), \ZZ)$ for $i = 2, \cdots, n$ denote the primitive generators, and the higher differentials are zero. At this stage, there are $2^{n-3}$ extension problems to solve for $n>3$, but no extension problems for $n=3$ which is how the previous result was obtained. In spite of this, since every group in the $E_{\infty}$-term of the spectral sequence is $\ZZ_N$, we can conclude that the higher twisted $K$-theory groups will be torsion with all elements having order a divisor of a power of $N$, even if the extension problems cannot be solved to determine the explicit torsion.
\end{proof}

To be more explicit about the extension problems involved, we consider the case of a $7$-twist on $SU(4)$.

\begin{example}
Following the proof of Lemma \ref{SU(n)}, we need to solve a single extension problem both for the odd-degree and even-degree groups of the form
\[ 0 \to \ZZ_N \to K^i(SU(4),\delta) \to \ZZ_N \to 0. \]
Although this extension problem has $\text{Ext}^1_{\ZZ}(\ZZ_N,\ZZ_N) \cong \ZZ_N$ inequivalent solutions, we can conclude that $K^i(SU(4),\delta)$ is a torsion group whose elements have order a divisor of $N^2$.
\end{example}

While an explicit computation of the higher twisted $K$-theory of $SU(n)$ is difficult in general, we employ techniques of Rosenberg \cite{RosenbergLie} to obtain structural information about these groups. It is at this point that we must turn to the more powerful Segal spectral sequence given in Theorem \ref{SSS} so that Theorem \ref{differentials} may be employed. We begin with a result specifically for 5-twists, before presenting a more general result.

\begin{remark}
We reiterate the concerns raised in Remark \ref{bootstrap} that the proof of Proposition \ref{K-homology} relies on an assumption which is based on a conjecture in $C^*$-algebra theory. Theorem \ref{concerned} should be viewed in light of this assumption.
\end{remark}

\begin{theorem}\label{concerned}
For any non-zero $\delta \in H^{5}(SU(n+1),\ZZ)$ given by $N$ times the generator with $N$ relatively prime to $n!$ ($n>1$), the graded group $K^*(SU(n+1),\delta)$ is isomorphic to $\ZZ_N$ tensored with an exterior algebra on $n-1$ odd generators.
\end{theorem}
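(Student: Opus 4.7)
The plan is to proceed by induction on $n$, with base case $n = 2$ already handled by Lemma \ref{SU(3)}: there $K^{0}(SU(3), \delta) = K^{1}(SU(3), \delta) = \ZZ_{N}$, which coincides with $\ZZ_{N}$ tensored with the exterior algebra on $n - 1 = 1$ odd generator. For the inductive step, I would use the classical fibration $SU(n) \hookrightarrow SU(n+1) \xrightarrow{\pi} S^{2n+1}$ together with the Segal spectral sequence of Theorem \ref{SSS}. As a preliminary observation, the map $\iota^{*}: H^{5}(SU(n+1), \ZZ) \to H^{5}(SU(n), \ZZ)$ is an isomorphism for $n \geq 2$ (it sends the primitive generator $c_{5}$ to $c_{5}$), so $\iota^{*}\delta$ is $N$ times the generator of $H^{5}(SU(n), \ZZ)$ with $\gcd(N, (n-1)!) = 1$; thus the inductive hypothesis applies to the fibre twist, and the compatibility hypothesis needed to invoke Theorem \ref{differentials} is satisfied.

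Next, apply the cohomological Segal spectral sequence $E_{2}^{p,q} = H^{p}(S^{2n+1}, K^{q}(SU(n), \iota^{*}\delta)) \Rightarrow K^{*}(SU(n+1), \delta)$. Because $S^{2n+1}$ has non-trivial integral cohomology only in degrees $0$ and $2n+1$, the $E_{2}$-page has exactly two non-zero columns, and the only potentially non-trivial differential is $d_{2n+1}$. The crux of the argument is to show that $d_{2n+1}$ vanishes. I would pass to the homological version of the spectral sequence and invoke Theorem \ref{differentials}: the differential on the bottom row is given by the composition of the boundary map $\partial: \pi_{2n+1}(S^{2n+1}) \to \pi_{2n}(SU(n))$ from the long exact homotopy sequence of the fibration, followed by the Hurewicz map into twisted $K$-homology. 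By Bott's theorem $\pi_{2n}(SU(n)) \cong \ZZ/n!$. The inductive hypothesis says that $K^{*}(SU(n), \iota^{*}\delta)$ is a direct sum of copies of $\ZZ_{N}$, so Proposition \ref{K-homology} is applicable and yields $K_{*}(SU(n), \iota^{*}\delta)$ as a direct sum of copies of $\ZZ_{N}$ with a degree shift. The hypothesis $\gcd(N, n!) = 1$ then forces the composition $\ZZ \to \ZZ/n! \to K_{*}(SU(n), \iota^{*}\delta)$ to be zero, killing $d^{2n+1}$ on the bottom row. To extend the vanishing to the entire differential, I would use that the twisted Segal spectral sequence is a module over the untwisted Segal spectral sequence for the same fibration, with the column $p = 2n+1$ generated over column $p = 0$ by a single class in row zero.

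With the spectral sequence collapsed at $E_{2}$, the associated graded of $K^{*}(SU(n+1), \delta)$ fits into a short exact sequence
\[
0 \to K^{*-2n-1}(SU(n), \iota^{*}\delta) \to K^{*}(SU(n+1), \delta) \to K^{*}(SU(n), \iota^{*}\delta) \to 0,
\]
and the hardest part will be to resolve these extensions, since $\Ext^{1}_{\ZZ}(\ZZ_{N}, \ZZ_{N}) \cong \ZZ_{N}$ permits non-split extensions in principle. My plan is to exploit the module structure of $K^{*}(SU(n+1), \delta)$ over the untwisted ring $K^{*}(SU(n+1)) \cong \Lambda(\beta_{1}, \ldots, \beta_{n})$, which descends to a module structure over $\ZZ_{N} \otimes \Lambda(\beta_{1}, \ldots, \beta_{n})$ upon reduction mod $N$. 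Combined with the order count $|K^{*}(SU(n+1), \delta)| = N^{2^{n-1}}$ read off from the spectral sequence and the $N$-torsion of everything in sight, this multiplicative structure should force the extensions to split, producing $K^{*}(SU(n+1), \delta) \cong \ZZ_{N} \otimes \Lambda$ with the exterior algebra on the $n - 1$ surviving odd generators. A secondary technical point is the precise compatibility between the homological and cohomological forms of the Segal spectral sequence, which I would handle via Proposition \ref{K-homology}.
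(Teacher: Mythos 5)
Your approach is essentially the same as the paper's: induction on $n$ with base case Lemma~\ref{SU(3)}, the Segal spectral sequence for $SU(n)\hookrightarrow SU(n+1)\to S^{2n+1}$, Theorem~\ref{differentials} together with $\gcd(N,n!)=1$ to kill the interesting differential, and Proposition~\ref{K-homology} to move between twisted $K$-theory and $K$-homology. Where you differ is in explicitly flagging two issues that the paper's proof passes by in silence: (1) Theorem~\ref{differentials} only controls $d^{2n+1}$ on the Hurewicz image inside the bottom row $E^2_{2n+1,0}$, so one still has to argue the whole $d^{2n+1}$ vanishes — you propose to use the module structure of the twisted sequence over the untwisted one, which is the natural fix; (2) even with collapse at $E_2$ there remains a genuine extension problem, since each $K^i(SU(n+1),\delta)$ is an extension of $E_\infty^{0,*}$ by $E_\infty^{2n+1,*}$ and $\Ext^1_\ZZ(\ZZ_N,\ZZ_N)\cong\ZZ_N$ is nonzero. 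The paper simply asserts the $E_\infty$-page "becomes" the answer; you at least name the problem. Your proposed resolution via the $K^*(SU(n+1))$-module structure and an order count is plausible but not yet a proof: the phrase "the $N$-torsion of everything in sight" is precisely what needs to be established, since an extension of $\ZZ_N$-modules need only be $\ZZ_{N^2}$-torsion, so as written the argument is slightly circular. So: same route, with your version being more self-aware about the missing details — the extension step still needs a genuine argument, and the paper doesn't supply one either.
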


\begin{proof}
We proceed by induction on $n$. First, note that the case $n=2$ has already been proved in Lemma \ref{SU(3)}, as we have shown that $K^0(SU(3),\delta) \cong \ZZ_N \cong K^1(SU(3),\delta)$ so that $K^*(SU(3),\delta)$ is of the form $\ZZ_N$ tensored with $\ZZ x$ for some odd generator $x$. Then by Proposition \ref{K-homology}, the same is true for the higher twisted $K$-homology groups. So assume $n > 2$ and that the result holds for higher twisted $K$-homology for smaller values of $n$. Take the Segal spectral sequence for higher twisted $K$-homology associated to the classical fibration
\[ SU(n) \xhookrightarrow{\iota} SU(n+1) \to S^{2n+1}, \]
which gives
\[ E^2_{p,q} = H_p(S^{2n+1},K_q(SU(n),\iota^* \delta)) \Rightarrow K_*(SU(n+1),\delta). \]
Note that since the map $\iota^*$ induced on ordinary cohomology by inclusion is an isomorphism in degree 5, we may identify $\iota^* \delta \in H^5(SU(n),\ZZ)$ with $\delta \in H^5(SU(n+1),\ZZ)$. Since $N$ is relatively prime to $(n-1)!$, by the inductive assumption we have \[K_*(SU(n),\delta) \cong \ZZ_N \otimes \wedge(x_1, \cdots, x_{n-2})\] for some odd generators $x_i$. We aim to show that the spectral sequence collapses. The only potentially non-zero differential is $d^{2n+1}$, which is related to the homotopical non-triviality of the fibration as explained in Theorem \ref{differentials}. To determine the explicit differential, we need to understand the Hurewicz maps mentioned in the theorem and the long exact sequence in homotopy for the fibration $SU(n) \xhookrightarrow{\iota} SU(n+1) \to S^{2n+1}$. This long exact sequence contains
\[ \pi_{2n+1}(SU(n+1)) \to \pi_{2n+1}(S^{2n+1}) \xrightarrow{\partial} \pi_{2n}(SU(n)) \to \pi_{2n}(SU(n+1)), \]
and so we see that the boundary map $\partial: \ZZ \to \ZZ_{n!}$ has kernel of index $n!$. Now, the Hurewicz map of interest is 
\[ \pi_{2n}(SU(n)) \to K_{2n}(SU(n),\delta) \cong K_0(SU(n),\delta). \]
Although this map is difficult to describe explicitly, since it is a map from $\ZZ_{n!}$ to a group generated by elements of order $N$ then if $\gcd(N,n!) = 1$ this map must be trivial and hence the differential is trivial. Thus if $\gcd(N,n!) = 1$ then the spectral sequence collapses and $K_*(SU(n+1), \delta)$ is isomorphic to $\ZZ_N$ tensored with an exterior algebra on $n-1$ odd generators since the $E^{\infty}$-term of the spectral sequence will consist of $\ZZ_N \otimes \wedge(x_1, \cdots, x_{n-2})$ in the zeroth and $(2n+1)$th columns which will become $K_0(SU(n+1),\delta)$ and $K_1(SU(n+1),\delta)$ respectively.

In order to conclude that the same is true for higher twisted $K$-theory, we see that the $E_2$-term of the Segal spectral sequence for higher twisted $K$-theory consists only of finite torsion groups, and even though we do not have information about the differentials in this sequence we may conclude that the $E_{\infty}$-term will also consist only of finite torsion groups and thus the limit of the sequence is a direct sum of torsion groups. Therefore we are in the setting of Proposition \ref{K-homology}, and we may use this to obtain the result for higher twisted $K$-theory from the computation for higher twisted $K$-homology.
\end{proof}

As we have not developed a simple interpretation of the Hurewicz map, we are forced to consider only the case in which this map is necessarily zero for other reasons. If the Hurewicz map could be better understood then this would likely allow for this result to be strengthened, leading to descriptions of the higher twisted $K$-groups when $\gcd(N,n!) \neq 1$. In the classical case, the torsion in these groups becomes very complicated and so it would be a great achievement to obtain explicit expressions for the torsion in the higher twisted $K$-theory groups of $SU(n)$ in general.

We also have a structural theorem which is applicable in a more general setting, but which provides slightly less information about the higher twisted $K$-theory groups.

\begin{theorem}\label{sun}
If $\delta_{k} \in H^{k}(SU(n),\ZZ)$ is given by $N \neq 0$ times any primitive generator of $H^*(SU(n),\ZZ)$ (all of which have odd degree) then $K^*(SU(n),\delta_k)$ is a finite abelian group and all elements have order a divisor of a power of $N$.
\end{theorem}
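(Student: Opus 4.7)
The plan is to apply the Atiyah--Hirzebruch spectral sequence of Theorem \ref{AHstatement} to $SU(n)$ with the twist $\delta_k = N c_k$, and exploit the fact that $H^*(SU(n),\ZZ) \cong \Lambda(c_3, c_5, \ldots, c_{2n-1})$ is torsion-free. Here the $c_{2j-1}$ are the primitive generators in odd degree, and $k \in \{3, 5, \ldots, 2n-1\}$ is the degree of the chosen primitive generator.

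First, I would argue that the differentials $d_r$ for $r < k$ vanish. By a universal-operations argument analogous to that used in the proof of Theorem \ref{HTSS}, any natural cohomology operation $H^p(X,\ZZ) \to H^{p+r}(X,\ZZ)$ depending additively on a twisting class in $H^k(X,\ZZ)$ must, for $r < k$, coincide with an ordinary (untwisted) cohomology operation; the $K(\ZZ,k)$ factor cannot contribute because $H^j(K(\ZZ,p),\ZZ) = 0$ for $0 < j < p$. Hence the twisted and untwisted AHSSes agree on the pages $E_2, \ldots, E_k$. By Arlettaz's theorem \cite{Arlettaz}, all untwisted AHSS differentials for topological $K$-theory are torsion operators, and therefore vanish on the torsion-free cohomology $H^*(SU(n),\ZZ)$. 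Thus $E_2 \cong E_3 \cong \cdots \cong E_k$, with each page agreeing with $H^*(SU(n),\ZZ)$ in the even rows.

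Then, by Theorem \ref{HTSS}, the $d_k$ differential satisfies $d_k(x) = d_k'(x) - N x \cup c_k$, where $d_k'$ is the corresponding untwisted differential, again vanishing on torsion-free cohomology. So $d_k$ reduces to $x \mapsto -N x \cup c_k$. Since $c_k$ has odd degree, $c_k^2 = 0$ in the exterior algebra, so the map $c_k \cup \cdot : H^* \to H^*$ has both kernel and image equal to the ideal $(c_k) \subset H^*(SU(n),\ZZ)$. The cohomology of $-N c_k \cup \cdot$ is therefore
\[ (c_k) \big/ N(c_k) \cong \Lambda(c_3, \ldots, \widehat{c_k}, \ldots, c_{2n-1}) \otimes_{\ZZ} \ZZ/N, \]
a finite abelian group annihilated by $N$. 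Hence the whole $E_{k+1}$-page is finite and annihilated by $N$.

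Higher differentials $d_r$ for $r > k$ are not explicitly computable with the tools assembled so far, but since each such differential acts between finite abelian groups annihilated by $N$, every subsequent page remains finite and $N$-annihilated, and in particular $E_\infty$ is a finite abelian group annihilated by $N$. Strong convergence of the spectral sequence presents $K^*(SU(n), \delta_k)$ as an iterated extension (of length at most $\dim SU(n) = n^2 - 1$) of such groups, so it is finite and every element has order dividing $N^{n^2 - 1}$; in particular, every element has order a divisor of a power of $N$. The main obstacle is the argument that $d_r = 0$ for $r < k$, since Theorem \ref{HTSS} only addresses the single differential $d_k$: this requires either the universal-operations argument sketched above or a skeletal filtration argument observing that $\delta_k$ restricts to zero on the $(k-1)$-skeleton of $SU(n)$. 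A subsidiary concern is the lack of control over higher differentials and over the extension problems in the final step, but these are harmless because the argument stays within the category of finite $N$-primary torsion groups throughout.
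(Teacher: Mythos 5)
Your proof is correct and takes a genuinely different route from the paper's. The paper argues by induction on $n$: Lemma \ref{SU(n)} (the twisted Atiyah--Hirzebruch argument) handles the top-degree twist $\delta_{2n-1}$ on $SU(n)$, and the inductive step applies the Segal spectral sequence of Theorem \ref{SSS} to the classical fibration $SU(n)\hookrightarrow SU(n+1)\to S^{2n+1}$ together with the isomorphism $\iota^*$ on cohomology in low degrees, feeding the inductive hypothesis through the $E_2$-page $H^p(S^{2n+1},K^q(SU(n),\iota^*\delta))$. You instead attack all degrees $k$ on $SU(n)$ uniformly via the twisted Atiyah--Hirzebruch sequence, arguing that $d_r$ for $r<k$ is an untwisted torsion operation (either by the Eilenberg--Mac Lane classification argument, with the correction that the relevant vanishing is $H^j(K(\ZZ,k),\ZZ)=0$ for $0<j<k$, not $H^j(K(\ZZ,p),\ZZ)$, or by the observation that $\delta_k$ restricts to zero on low skeleta), then computing $d_k=-Nc_k\cup(-)$ explicitly and observing that its cohomology is $\Lambda(c_3,\dots,\widehat{c_k},\dots,c_{2n-1})\otimes\ZZ/N$. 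What your approach buys is self-containedness --- you avoid the Segal sequence entirely and give explicit information about the $E_{k+1}$-page --- and it is a cleaner generalisation of Lemma \ref{SU(n)}, whose proof already asserts (without elaboration) that the sub-twist-degree differentials are torsion operations; your remark about the higher differentials and extension problems being harmless because everything stays $N$-primary is in fact more careful than the paper's claim in Lemma \ref{SU(n)} that "the higher differentials are zero." What the paper's route buys is that the Segal spectral sequence and fibration argument are reused immediately afterward in Theorem \ref{concerned}, so the inductive structure is set up once and exploited twice.
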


\begin{proof}
Again we proceed by induction on $n$, and observe that the base case has already been proved in Lemma \ref{SU(n)}. Hence we need only show that under this assumption it is true for $\delta_{2n-1} \in H^{2n-1}(SU(n+1),\ZZ)$. To do this, we once again use the classical fibration over $S^{2n+1}$ and apply the Segal spectral sequence, this time simply in higher twisted $K$-theory, to obtain 
\[E_2^{p,q} = H^p(S^{2n+1},K^q(SU(n),\delta_{2n-1})) \Rightarrow K^*(SU(n+1),\delta_{2n-1}).\]
Here we are again identifying $\iota^* \delta$ with $\delta$ using the isomorphism induced by inclusion on cohomology. Since $K^q(SU(n),\delta_{2n-1})$ is torsion with all elements having order a divisor of a power of $N$ by the inductive assumption, the same is true for $E_2$ and thus $E_{\infty}$. Finally, even if there are non-trivial extension problems to solve in order to obtain $K^*(SU(n+1),\delta_{2n-1})$, the result is still true as argued in the proof of Lemma \ref{SU(n)}.
\end{proof}

In some cases, this result yields particularly useful information. For instance, if $N=1$ then $K^*(X,\delta)$ vanishes identically and if $N = p^r$ is a prime power then $K^*(X,\delta)$ is a $p$-primary torsion group.

Whilst these results are not completely general, they do provide some insight into the complicated behaviour of the higher twisted $K$-theory of $SU(n)$ and lay the foundation for further research into this area. In the classical setting, Rosenberg is able to draw more general conclusions by using a universal coefficient theorem of Khorami \cite{Khorami}. This universal coefficient theorem relies on techniques that we have not developed in the higher twisted $K$-theory setting, but given a greater understanding of the cohomology groups of $\Aut(\OK)$ it may be possible to obtain a universal coefficient theorem in higher twisted $K$-theory which could allow for the results in this section to be generalised. Note that the approaches used here can also be applied to other compact, simply connected, simple Lie groups such as the symplectic groups $Sp(n)$ and $G_2$, and similar approaches exist for some non-simply connected groups such as the projective special unitary groups as used in \cite{MRosenberg}.

\subsection{Real projective spaces}

We now consider spaces with torsion in their cohomology, and show how in these cases the twists of $K$-theory can still be identified with odd-degree integral cohomology. We begin with odd-dimensional real projective space $\RR P^{2n+1}$, which has integral cohomology groups
\begin{equation}\label{RPnc}
H^p(\RR P^{2n+1}, \ZZ) = \begin{cases} \ZZ &\text{ if } p=0, 2n+1; \\ \ZZ_2 &\text{ if } 0 < p < 2n+1 \text{ is even}; \\ 0 &\text{ else}; \end{cases}
\end{equation}
and by the universal coefficient theorem the $\ZZ_2$-cohomology is
\[ H^p(\RR P^{2n+1}, \ZZ_2) = \begin{cases} \ZZ_2 &\text{ if } 0 \leq p \leq 2n+1; \\ 0 &\text{ else}. \end{cases} \]
Recall from the discussion preceding Theorem \ref{cohomology} that the full set of twists of $K$-theory over a space $X$ is given by the first group in a generalised cohomology theory $E_{\cO_{\infty}}^1(X)$, which is computed via a spectral sequence. In order to determine whether we may identify the twists of $K$-theory over $\RR P^{2n+1}$ with all odd-degree cohomology classes, we use this spectral sequence to compute $E_{\cO_{\infty}}^1(\RR P^{2n+1})$. The $E_2$-term of this spectral sequence is as follows.
\begin{center}
\begin{tikzpicture}
  \matrix (m) [matrix of math nodes,
    nodes in empty cells,nodes={minimum width=2.3ex,
    minimum height=5ex,outer sep=-5pt},
    column sep=1ex,row sep=1ex]{
    				& &  0  &  1  &  2 & 3 & 4 & \cdots & 2n & 2n+1 &\\
		0    	& &	\ZZ_2  & \circles \ZZ_2 & \ZZ_2 & \ZZ_2 & \ZZ_2 & \cdots & \ZZ_2 & \ZZ_2 & \\
		-1     	& &	0 &  0  & \circles 0 & 0 & 0 & \cdots & 0 & 0 & \\
		-2    	& &	\ZZ  & 0 & \ZZ_2 & \circles 0 & \ZZ_2 & \cdots & \ZZ_2 & \ZZ & \\
		-3    	& &	0 &  0  & 0 & 0 & \circles 0 & \cdots & 0 & 0 & \\
		\vdots & & \vdots & \vdots & \vdots & \vdots & \vdots & \circles \ddots & \vdots & \vdots & \\
		-2n+1	& &	0 &  0  & 0 & 0 & 0 & \cdots & \circles 0 & 0 & \\
		-2n	& & \ZZ  & 0 & \ZZ_2 & 0 & \ZZ_2 & \cdots & \ZZ_2 & \circles \ZZ & \\};
\draw[thick] (m-1-1.south) -- (m-1-11.south) ;
\draw[thick] (m-1-2.north) -- (m-8-2.south) ;
\end{tikzpicture}
\end{center}
The only differentials that could potentially be non-zero in this spectral sequence are maps $H^{odd}(\RR P^{2n+1},\ZZ_2) \to H^{even}(\RR P^{2n+1}, \ZZ)$ and $H^{even}(\RR P^{2n+1},\ZZ) \to H^{2n+1}(\RR P^{2n+1}, \ZZ)$. The latter of these will necessarily be zero since the target is torsion-free. Now, since the twists are determined specifically by the first group in this generalised cohomology theory, the only groups of interest in this spectral sequence are those circled and thus the only differentials that may have an effect are those from $H^1(\RR P^{2n+1},\ZZ_2)$. It is known, however, that the classical twists of $K$-theory are those coming from $H^1(X,\ZZ_2)$ and $H^3(X,\ZZ)$, and as such these groups always form a subgroup of $E^1_{\cO_{\infty}}(X)$. This means that the differentials leaving $H^1(\RR P^{2n+1},\ZZ_2)$ are all trivial, and thus $E^1_{\cO_{\infty}}(\RR P^{2n+1}) \cong \ZZ_2 \oplus \ZZ$. As the $H^1$ twists are studied in the classical case, the higher twists of interest are those coming from the $H^{2n+1}(\RR P^{2n+1},\ZZ) \cong \ZZ$ factor. The same argument may be applied to even-dimensional real projective space $\RR P^{2n}$, but this has no non-trivial odd-dimensional cohomology groups and so the only twists of $K$-theory are those coming from $H^1(\RR P^{2n},\ZZ_2) \cong \ZZ_2$.

We will now compute the higher twisted $K$-theory of $\RR P^{2n+1}$ for a $(2n+1)$-twist $\delta$. We provide two different proofs of this proposition using the Mayer--Vietoris sequence, the latter of which can be generalised to lens spaces.

\begin{proposition}\label{RPn}
Let $\delta \in H^{2n+1}(\RR P^{2n+1},\ZZ)$ be a twist of $K$-theory for $\RR P^{2n+1}$ which is $N \neq 0$ times the generator, with $n \geq 1$. The higher twisted $K$-theory of $\RR P^{2n+1}$ is then
\[ K^0(\RR P^{2n+1}, \delta) = \ZZ_{2^n} \quad \text{and} \quad K^1(\RR P^{2n+1},\delta) = \ZZ_N. \]
\end{proposition}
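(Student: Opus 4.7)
The plan is to apply the Mayer--Vietoris sequence of Proposition \ref{MV} to a decomposition $\RR P^{2n+1} = U \cup V$ where $V$ is a small open ball around a point in the top cell and $U = \RR P^{2n+1} \setminus \{pt\}$ deformation retracts onto $\RR P^{2n}$. Then $U \cap V$ deformation retracts onto $S^{2n}$, and the inclusion $U \cap V \hookrightarrow U$ is homotopic, after the retractions, to the double cover $p \colon S^{2n} \to \RR P^{2n}$ (the attaching map of the top cell of $\RR P^{2n+1}$). Since $H^{2n+1}(\RR P^{2n},\ZZ) = 0$ and both $V$ and $S^{2n}$ have trivial odd-degree integral cohomology, the restricted twists $\delta|_U$, $\delta|_V$ and $\delta|_{U \cap V}$ are all trivial, so the relevant higher twisted groups reduce to ordinary $K$-theory: one has $K^0(\RR P^{2n}) = \ZZ \oplus \ZZ_{2^n}$, $K^1(\RR P^{2n}) = 0$, and $K^*(S^{2n}) = \ZZ \oplus \ZZ$ (even), $0$ (odd).

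Feeding this into the six-term sequence of Proposition \ref{MV} and using that $K^1$ of both $\RR P^{2n}$ and the point vanish, I obtain a four-term exact sequence
\[
0 \to K^0(\RR P^{2n+1},\delta) \to \ZZ \oplus \ZZ_{2^n} \oplus \ZZ \xrightarrow{\phi} \ZZ \oplus \ZZ \to K^1(\RR P^{2n+1},\delta) \to 0,
\]
and the whole computation reduces to identifying the map $\phi = j_U^* - j_V^*$. For $j_V^*$ the twist plays no role since $V$ is contractible, giving $c \mapsto (c,0)$. For $j_U^*$, I argue exactly as in the proof of Proposition \ref{spheres}: trivialisations of $\cA_\delta$ over $U$ and over $V$ differ on $U \cap V$ by a map $S^{2n} \to \Aut(\OK)$, and the fact that $\delta$ is $N$ times the generator of $H^{2n+1}(\RR P^{2n+1},\ZZ) \cong \ZZ$, combined with the cofibre sequence $\RR P^{2n} \hookrightarrow \RR P^{2n+1} \to S^{2n+1}$ showing that $\delta$ pulls back to $N$ times the generator on the top cell, forces this transition function to be a degree-$N$ representative of $\pi_{2n}(\Aut(\OK))$.

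The map $j_U^*$ is therefore the composition of the pullback $p^* \colon K^0(\RR P^{2n}) \to K^0(S^{2n})$ with the change-of-trivialisation operator on $K^0(S^{2n}) = \ZZ \oplus \ZZ$ that I computed in the sphere case. Since $p$ pulls the tautological real line bundle back to a trivial bundle, $p^*$ sends $(a,b) \in \ZZ \oplus \ZZ_{2^n}$ to $(a,0)$, killing the torsion summand; composing with the transition correction gives $j_U^*(a,b) = (a,-Na)$, so $\phi((a,b),c) = (a-c,\,-Na)$. A routine calculation then yields $\ker \phi = \{((0,b),0) : b \in \ZZ_{2^n}\} \cong \ZZ_{2^n}$ and $\operatorname{im} \phi = \ZZ \oplus N\ZZ$, so $\coker \phi \cong \ZZ_N$, and exactness delivers $K^0(\RR P^{2n+1},\delta) \cong \ZZ_{2^n}$ and $K^1(\RR P^{2n+1},\delta) \cong \ZZ_N$.

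The main obstacle is rigorously justifying the shape of $j_U^*$, i.e.\ that the transition function gluing the trivialisations over $U$ and $V$ is genuinely a degree-$N$ map $S^{2n} \to \Aut(\OK)$ and that its effect on $K^0(S^{2n})$ is the same $(x,y) \mapsto (x,\,y-Nx)$ shift identified in the sphere computation. The cleanest way to do this is to pull $\cA_\delta$ back to the top cell $D^{2n+1}$, observe that collapsing $\partial D^{2n+1}$ gives a bundle over $S^{2n+1}$ classified by the image of $\delta$ under the quotient map $H^{2n+1}(\RR P^{2n+1},\ZZ) \to H^{2n+1}(S^{2n+1},\ZZ)$, which is an isomorphism sending $\delta$ to $N$ times the generator; then the clutching analysis of Section~3.1 together with the argument of Proposition \ref{spheres} gives the required formula verbatim.
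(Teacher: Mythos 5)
Your proof is correct, but takes a genuinely different route from both of the paper's proofs. The paper's Proof 1 uses the six-term exact sequence of $C^*$-algebras attached to the pair $(\RR P^{2n+1}, \RR P^{2n})$, i.e.\ $0 \to C_0(\RR^{2n+1}, \OK) \to C(\RR P^{2n+1}, \cA_\delta) \to C(\RR P^{2n},\OK) \to 0$, yielding the four-term sequence $0 \to K^0(\RR P^{2n+1},\delta) \to \ZZ_{2^n}\oplus\ZZ \xrightarrow{\partial}\ZZ\to K^1(\RR P^{2n+1},\delta)\to 0$ with the connecting map $\partial(m,n) = nN$ cited from \cite{BCMMS}; Proof 2 lifts to the double cover and works $\ZZ_2$-equivariantly, an approach that generalises directly to lens spaces. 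You instead run Mayer--Vietoris over the top-cell decomposition $U\cup V$, landing in $0\to K^0\to(\ZZ\oplus\ZZ_{2^n})\oplus\ZZ\xrightarrow{\phi}\ZZ\oplus\ZZ\to K^1\to 0$, and you derive the map $\phi$ from first principles: $p^*$ kills the torsion (since the tautological line bundle pulls back trivially to $S^{2n}$), and the transition correction is degree-$N$ because the twist on $\RR P^{2n+1}$, being trivialisable over $\RR P^{2n}$, descends to the quotient $S^{2n+1}$ where the clutching analysis of Section 3.1 and Proposition \ref{spheres} identifies it. Your route is somewhat longer but more self-contained, replacing the citation of the connecting map from \cite{BCMMS} with the paper's own sphere computation; the paper's Proof 2 is more structural and is what is actually reused for the lens-space case. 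One small imprecision: the cofibre map $q\colon \RR P^{2n+1}\to S^{2n+1}$ induces $q^*\colon H^{2n+1}(S^{2n+1},\ZZ)\to H^{2n+1}(\RR P^{2n+1},\ZZ)$, not the other direction as you wrote; since this $q^*$ is an isomorphism, your identification of the descended twist on $S^{2n+1}$ with $N$ times the generator is still the correct conclusion, but the map should be stated in the contravariant direction.
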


\begin{proof}[Proof 1]
Letting $\cA_{\delta}$ be the algebra bundle with fibre $\OK$ over $\RR P^{2n+1}$ corresponding to the twist $\delta$, we consider the short exact sequence of $C^*$-algebras
\[ 0 \to C_0(\RR^{2n+1}, \OK) \xrightarrow{\iota} C(\RR P^{2n+1}, \cA_{\delta}) \xrightarrow{\pi} C(\RR P^{2n}, \OK) \to 0, \]
where the maps $\iota$ and $\pi$ come from viewing $\RR^{2n+1}$ as the quotient of $\RR P^{2n+1}$ by $\RR P^{2n}$. The corresponding six-term exact sequence is 
\[
\begin{tikzcd}
K^0(\RR^{2n+1}) \ar{r}{\iota_*} &K^0(\RR P^{2n+1},\delta) \ar{r}{\pi_*} &K^0(\RR P^{2n}) \ar{d}{\partial} \\
K^1(\RR P^{2n}) \ar{u}{\partial} &K^1(\RR P^{2n+1},\delta) \ar{l}{\pi_*} &K^1(\RR^{2n+1}), \ar{l}{\iota_*}
\end{tikzcd}
\]
where higher twisted $K$-theory groups have been identified with topological $K$-theory groups via trivialisations, and this simplifies to
\[ 0 \to K^0(\RR P^{2n+1},\delta) \xrightarrow{\pi_*} \ZZ_{2^n} \oplus \ZZ \xrightarrow{\partial} \ZZ \xrightarrow{\iota_*} K^1(\RR P^{2n+1},\delta) \to 0. \]
In this case, the connecting map is given by $\partial(m,n) = nN$ as in 8.3 of \cite{BCMMS}, and this has kernel $\ZZ_{2^n}$ and cokernel $\ZZ_N$ when $N \neq 0$ as required.
\end{proof}

Although we have not discussed equivariant higher twisted $K$-theory, the definitions and properties from the classical case generalise immediately and we use this in the following proof.

\begin{proof}[Proof 2]
Viewing $\RR P^{2n+1}$ as the quotient $S^{2n+1} / \ZZ_2$, we take the short exact sequence
\[ 0 \to C_0(\RR \times S^{2n}, \OK) \xrightarrow{\iota} C(S^{2n+1},\cA_{\delta}) \xrightarrow{\pi} C(\{x_0, x_1\},\OK) \to 0 \]
where $S^{2n+1} \setminus \{x_0,x_1\} \cong \RR \times S^{2n}$ and $x_0$ and $x_1$ are related by the $\ZZ_2$-action. The associated six-term exact sequence in $\ZZ_2$-equivariant $K$-theory is
\[
\begin{tikzcd}
K^0_{\ZZ_2}(\RR \times S^{2n}) \ar{r}{\iota_*} &K^0_{\ZZ_2}(S^{2n+1},\delta) \ar{r}{\pi_*} &K^0_{\ZZ_2}(\{x_0,x_1\}) \ar{d}{\partial} \\
K^1_{\ZZ_2}(\{x_0,x_1\}) \ar{u}{\partial} &K^1_{\ZZ_2}(S^{2n+1},\delta) \ar{l}{\pi_*} &K^1_{\ZZ_2}(\RR \times S^{2n}), \ar{l}{\iota_*}
\end{tikzcd}
\]
where trivialisations have been used to identify higher twisted $K$-theory groups with topological $K$-theory groups. Here, since $\ZZ_2$ acts freely on $\{x_0, x_1\}$ then $K^*_{\ZZ_2}(\{x_0,x_1\}) = K^*(\{x_0\})$. Furthermore, since $\ZZ_2$ acts freely on $S^{2n+1}$ we also have $K^*_{\ZZ_2}(S^{2n+1},\delta) \cong K^*(\RR P^{2n+1},\delta)$, where the class $\delta$ on $\RR P^{2n+1}$ is identified with a class on $S^{2n+1}$ using the isomorphism induced by the projection map. In order to determine the equivariant $K$-theory of $\RR \times S^{2n}$, the untwisted version of this six-term exact sequence may be applied analogously to \cite{BCMMS} to find that $K^0_{\ZZ_2}(\RR \times S^{2n}) = \ZZ_{2^n}$ while $K^1_{\ZZ_2}(\RR \times S^{2n}) = \ZZ$. Thus this sequence reduces to
\[ 0 \to \ZZ_{2^n} \xrightarrow{\iota_*} K^0(\RR P^{2n+1},\delta) \xrightarrow{\pi_*} \ZZ \xrightarrow{\partial} \ZZ \xrightarrow{\iota_*} K^1(\RR P^{2n+1},\delta) \to 0, \]
where the connecting map $\partial: \ZZ \to \ZZ$ is once again multiplication by $N$ as in \cite{BCMMS}. This allows us to conclude that $K^0(\RR P^{2n+1},\delta) \cong \ZZ_{2^n}$ and $K^1(\RR P^{2n+1},\delta) \cong \ZZ_N$ as required.
\end{proof}

Note that this computation agrees with the classical case of the 3-twisted $K$-theory of $\RR P^3$. Although the Atiyah--Hirzebruch spectral sequence proved useful in our previous computations, it is not so helpful in this case due to the torsion in the cohomology of $\RR P^{2n+1}$ and so we will not use it.

\subsection{Lens spaces}

The equivariant computation for $\RR P^{2n+1}$ generalises nicely to lens spaces. While lens spaces of the form $S^3 / \ZZ_p$ for $p$ prime are particularly common, there exists a notion of higher lens space $L(n,p) = S^{2n+1} / \ZZ_p$ where $\ZZ_p$ identified with the $p^{th}$ roots of unity in $\CC$ acts on $S^{2n+1} \subset \CC^{n+1}$ by scaling. In this notation, the lens space $S^3 / \ZZ_p$ is $L(1,p)$ and real projective space $\RR P^{2n+1}$ can be viewed as $L(n,2)$. Since these higher lens spaces are quotients of $S^{2n+1}$, they have non-trivial cohomology in degree $2n+1$ and thus it is natural to consider their higher twisted $K$-theory.

As we did for $\RR P^{2n+1}$, we must determine the group $E^1_{\cO_{\infty}}(L(n,p))$ to see whether the twists of $K$-theory for $L(n,p)$ may be identified with all odd-degree cohomology classes. To do so, we observe that the cohomology groups of $L(n,p)$ are the same as those of $\RR P^{2n+1}$ in (\ref{RPnc}), but with $\ZZ_2$ replaced with $\ZZ_p$. Without needing to determine $H^*(L(n,p),\ZZ_2)$ we then follow the same argument to conclude that $E^1_{\cO_{\infty}}(L(n,p)) \cong H^1(L(n,p),\ZZ_2) \oplus H^{2n+1}(L(n,p),\ZZ)$. Thus we can view the higher twists of $K$-theory over $L(n,p)$ as integral $(2n+1)$-classes.

\begin{proposition}\label{lens}
Let $\delta \in H^{2n+1}(L(n,p),\ZZ)$ be a twist of $K$-theory for $L(n,p)$ which is $N \neq 0$ times the generator, where $n \geq 1$. The higher twisted $K$-theory of $L(n,p)$ is then
\[ K^0(L(n,p), \delta) = \ZZ_{p^n} \quad \text{and} \quad K^1(L(n,p),\delta) = \ZZ_N. \]
\end{proposition}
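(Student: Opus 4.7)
The plan is to adapt the equivariant argument given in the second proof of Proposition~\ref{RPn} essentially verbatim, replacing the $\ZZ_2$-action on $S^{2n+1}$ by the $\ZZ_p$-action realising $L(n,p) = S^{2n+1}/\ZZ_p$. First I would choose a pair of antipodal-type points $\{x_0, x_1, \ldots, x_{p-1}\}$ forming a single free $\ZZ_p$-orbit on $S^{2n+1}$, so that $S^{2n+1} \setminus \{x_0, \ldots, x_{p-1}\}$ is $\ZZ_p$-equivariantly homeomorphic to $\RR \times S^{2n}$ (here the $S^{2n}$ factor carries the induced free $\ZZ_p$-action and $\RR$ is acted on trivially). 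Then I would write down the short exact sequence of $C^*$-algebras
\[ 0 \to C_0(\RR \times S^{2n}, \OK) \xrightarrow{\iota} C(S^{2n+1}, \cA_\delta) \xrightarrow{\pi} C(\{x_0,\ldots,x_{p-1}\}, \OK) \to 0 \]
and pass to the associated six-term exact sequence in $\ZZ_p$-equivariant higher twisted $K$-theory.

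The next step is to simplify each term using the fact that $\ZZ_p$ acts freely. Since the action on $\{x_0, \ldots, x_{p-1}\}$ is free and transitive, $K_{\ZZ_p}^*(\{x_0,\ldots,x_{p-1}\}) \cong K^*(\{x_0\})$, giving $\ZZ$ in degree $0$ and $0$ in degree $1$. Similarly, since $\ZZ_p$ acts freely on $S^{2n+1}$, I can identify $K_{\ZZ_p}^*(S^{2n+1},\delta) \cong K^*(L(n,p), \delta)$ where the twist on the quotient is determined by pullback under the projection. The nontrivial computational input is $K_{\ZZ_p}^*(\RR \times S^{2n})$, which I would evaluate via the untwisted version of the same six-term sequence (now with $S^{2n+1}$ replaced by its untwisted $K$-theory), leading to the iterative structure controlled by the representation ring $R(\ZZ_p)$. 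The analogous computation is carried out in~\cite{BCMMS} and in the real-projective case yields $\ZZ_{p^n}$ in degree $0$ and $\ZZ$ in degree $1$; I would invoke the same argument to obtain $K_{\ZZ_p}^0(\RR \times S^{2n}) = \ZZ_{p^n}$ and $K_{\ZZ_p}^1(\RR \times S^{2n}) = \ZZ$.

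Inserting these values reduces the six-term sequence to
\[ 0 \to \ZZ_{p^n} \xrightarrow{\iota_*} K^0(L(n,p), \delta) \xrightarrow{\pi_*} \ZZ \xrightarrow{\partial} \ZZ \xrightarrow{\iota_*} K^1(L(n,p),\delta) \to 0. \]
The remaining task is to identify the connecting map $\partial \colon \ZZ \to \ZZ$ as multiplication by $N$. I expect this to be the main obstacle, though it is the direct analogue of the identification used for $\RR P^{2n+1}$: the twist $\delta$ restricts nontrivially to a generator of $H^{2n+1}(S^{2n+1}, \ZZ) \cong \ZZ$ multiplied by $N$, and the same argument from Section~8.3 of~\cite{BCMMS} that computed the boundary map for $\RR P^{2n+1}$ applies here since the twist and its behaviour under the free quotient are analogous. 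Granted this identification of $\partial$ as multiplication by $N$, the kernel is $0$ and the cokernel is $\ZZ_N$, so the sequence immediately yields $K^0(L(n,p),\delta) \cong \ZZ_{p^n}$ and $K^1(L(n,p),\delta) \cong \ZZ_N$, as claimed. Note that setting $p = 2$ recovers Proposition~\ref{RPn}, which is a consistency check.
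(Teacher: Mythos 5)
Your overall strategy matches the paper's: take a free $\ZZ_p$-orbit $A \subset S^{2n+1}$, form the short exact sequence of $C^*$-algebras associated to the pair $(S^{2n+1},A)$, pass to $\ZZ_p$-equivariant higher twisted $K$-theory, and identify the connecting map as multiplication by $N$ via \cite{BCMMS}. However, there is a concrete error in how you compute the middle term. You assert that $S^{2n+1} \setminus \{x_0,\ldots,x_{p-1}\}$ is $\ZZ_p$-equivariantly homeomorphic to $\RR \times S^{2n}$ with $\ZZ_p$ acting freely on the $S^{2n}$ factor. This is false for $p > 2$. Topologically, $S^{2n+1}$ with $p$ points removed is homotopy equivalent to a wedge of $p-1$ copies of $S^{2n}$, not to $\RR \times S^{2n}$ (which is homotopy equivalent to a single $S^{2n}$); the identification only works in the antipodal case $p=2$. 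Moreover, $\ZZ_p$ cannot act freely on any even-dimensional sphere $S^{2n}$ when $p > 2$, since a free action of a finite group $G$ on $S^{2n}$ would force $|G|$ to divide $\chi(S^{2n}) = 2$. So the object you are proposing to compute the equivariant $K$-theory of simply does not exist.

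The paper sidesteps this by never trying to identify the complement with a product. Instead, it observes that the equivariant six-term exact sequence for the pair $(S^{2n+1},A)$ splits because $K^*_{\ZZ_p}(A) \cong K^*(\text{pt})$ and the map $K^*_{\ZZ_p}(S^{2n+1}) \to K^*_{\ZZ_p}(A)$ is surjective, giving $K^*_{\ZZ_p}(S^{2n+1}\setminus A) \cong \widetilde{K}^*_{\ZZ_p}(S^{2n+1}) \cong \widetilde{K}^*(L(n,p))$. The untwisted $K$-theory of the lens space ($K^0(L(n,p)) \cong \ZZ_{p^n} \oplus \ZZ$, $K^1(L(n,p)) \cong \ZZ$) is then supplied by a classical computation following Corollary 2.7.6 of \cite{AtiyahK}. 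You reach the correct answer $\ZZ_{p^n}$ in degree $0$ and $\ZZ$ in degree $1$, but your justification breaks down as soon as $p > 2$; to repair it you should replace the $\RR \times S^{2n}$ step by the reduced equivariant $K$-theory argument above.
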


\begin{proof}
Letting $A \subset L(n,p)$ consist of a $\ZZ_p$-orbit and $\cA_{\delta}$ denote the algebra bundle with fibre $\OK$ over $L(n,p)$ corresponding to the twist $\delta$, we take the short exact sequence
\[ 0 \to C_0(S^{2n+1} \setminus A, \OK) \xrightarrow{\iota} C(S^{2n+1},\cA_{\delta}) \xrightarrow{\pi} C(A,\OK) \to 0. \]
The associated six-term exact sequence in $\ZZ_p$-equivariant higher twisted $K$-theory is
\[
\begin{tikzcd}
K^0_{\ZZ_p}(S^{2n+1} \setminus A) \ar{r}{\iota_*} &K^0_{\ZZ_p}(S^{2n+1},\delta) \ar{r}{\pi_*} &K^0_{\ZZ_p}(A) \ar{d}{\partial} \\
K^1_{\ZZ_p}(A) \ar{u}{\partial} &K^1_{\ZZ_p}(S^{2n+1},\delta) \ar{l}{\pi_*} &K^1_{\ZZ_p}(S^{2n+1} \setminus A), \ar{l}{\iota_*}
\end{tikzcd}
\]
where higher twisted $K$-theory groups have been identified with topological $K$-theory groups via trivialisations. Since $\ZZ_p$ acts freely on the compact set $A$ we have $K^*_{\ZZ_p}(A) = K^*(\{x_0\})$, and similarly $K^*_{\ZZ_p}(S^{2n+1},\delta) = K^*(L(n,p),\delta)$ where once again $\delta \in H^{2n+1}(L(n,p),\ZZ)$ is identified with $\delta \in H^{2n+1}(S^{2n+1},\ZZ)$ via the isomorphism induced by the projection map. It remains to determine $K^*_{\ZZ_p}(S^{2n+1} \setminus A)$. To do so, we require some basic properties of equivariant $K$-theory which carry over from the standard case. Firstly, identifying $K^*(S^{2n+1},A)$ with $K^*(S^{2n+1} \setminus A)$, we have the short exact sequences
\[ 0 \to K^m_{\ZZ_p}(S^{2n+1} \setminus A) \to K^m_{\ZZ_p}(S^{2n+1}) \to K^m_{\ZZ_p}(A) \to 0 \]
for $m = 0$ and $1$. Since $K^*_{\ZZ_p}(A) = K^*(\{x_0\})$, this implies that $K^n_{\ZZ_p}(S^{2n+1} \setminus A) \cong \widetilde{K}^n_{\ZZ_p}(S^{2n+1})$, which is simply isomorphic to $\widetilde{K}^n(L(n,p))$. By a computation analogous to that for $\RR P^{2n+1}$ using Corollary 2.7.6 of \cite{AtiyahK}, we observe that $K^0(L(n,p)) \cong \ZZ_{p^n} \oplus \ZZ$ while $K^1(L(n,p)) \cong \ZZ$. Thus the sequence reduces to
\[ 0 \to \ZZ_{p^n} \xrightarrow{\iota_*} K^0(L(n,p),\delta) \xrightarrow{\pi_*} \ZZ \xrightarrow{\partial} \ZZ \xrightarrow{\iota_*} K^1(L(n,p),\delta) \to 0, \]
where the connecting map $\partial: \ZZ \to \ZZ$ is multiplication by $N$ as in 8.3 of \cite{BCMMS}. Hence $K^0(L(n,p),\delta) \cong \ZZ_{p^n}$ and $K^1(L(n,p),\delta) \cong \ZZ_N$ as required.
\end{proof}

This provides a generalisation of the result for $\RR P^{2n+1}$ in Proposition \ref{RPn} as well as for the 3-dimensional lens spaces in Section 8.4 of \cite{BCMMS}.

\subsection{\texorpdfstring{$SU(2)$}{SU(2)}-bundles}

For the final computation we turn to a class of examples of greater relevance in physics. It is of great interest in both string theory and mathematical gauge theory to investigate $SU(2)$-bundles over manifolds $M$. The link with string theory appears through spherical T-duality; a generalisation of T-duality investigated in a series of papers by Bouwknegt, Evslin and Mathai \cite{BEM1,BEM2,BEM3}. As shown by the authors, this form of duality provides a map between certain conserved charges in type IIB supergravity and string compactifications. They also find links between spherical T-duality and higher twisted $K$-theory. Specifically in the case that $M$ is a compact oriented 4-manifold, the authors compute the 7-twisted $K$-theory of the total space of a principal $SU(2)$-bundle over $M$ in terms of its second Chern class up to an extension problem. The ordinary 3-twisted $K$-theory can be computed using standard techniques, which leaves the 5-twisted $K$-theory to be computed. In our computations we will assume that $M$ has torsion-free cohomology, which is true if the additional assumption that $M$ is simply connected is made but this is not necessary.

Note that we are not requiring the $SU(2)$-bundle $P$ over $M$ to be a principal bundle. The computation will be valid for both principal $SU(2)$-bundles as used in \cite{BEM1} as well as oriented non-principal $SU(2)$-bundles used in \cite{BEM2}, the former of which correspond to unit sphere bundles of quaternionic line bundles and the latter of which correspond to unit sphere bundles of rank 4 oriented real Riemannian vector bundles. The reason that we need not distinguish between these types of bundles is that there is a Gysin sequence in each case allowing the cohomology of $P$ to be computed. Given an $SU(2)$-bundle of either of these forms $\pi: P \to M$, there is a Gysin sequence of the form
\[ \cdots \to H^k(M,\ZZ) \xrightarrow{\pi^*} H^k(P,\ZZ) \xrightarrow{\pi_*} H^{k-3}(M,\ZZ) \xrightarrow{\cup e(P)} H^{k+1}(M,\ZZ) \to \cdots \]
where $e(P)$ denotes the Euler class of $P$ which may be identified with the second Chern class of the associated vector bundle in the principal bundle case. In this sequence, the pushforward $\pi_*$ is defined by using Poincar\'e duality to change from cohomology to homology, using the pushforward in homology and once again employing Poincar\'e duality to switch back, which explains the degree shift. We can use this to compute the integral cohomology of $P$ in terms of that of $M$. In what follows, we will assume that all exact sequences split in order to compute the higher twisted $K$-theory up to extension problems. Although this will not be true in all cases, it still yields meaningful results as in \cite{BEM1,BEM2}.

Firstly, we assume that $e(P) = 0$ in which case the obstruction to $\pi: P \to M$ having a section vanishes and therefore the Gysin sequence truly does split at $\pi_*$. This yields
\[ H^k(P,\ZZ) \cong H^k(M,\ZZ) \oplus H^{k-3}(M,\ZZ). \]
Thus we obtain
\[ H^k(P,\ZZ) \cong
\begin{cases}
\ZZ &\text{ if } k=0, 7; \\
H^k(M,\ZZ) &\text{ if } k=1, 2; \\
H^3(M,\ZZ) \oplus \ZZ &\text{ if } k=3; \\
H^1(M,\ZZ) \oplus \ZZ &\text{ if } k=4; \\
H^{k-3}(M,\ZZ) &\text{ if } k=5, 6.
\end{cases}
\]
If $e(P) = j \in \ZZ$ with $j \neq 0$ on the other hand, then the cup product with $e(P)$ from $H^0(M,\ZZ)$ to $H^4(M,\ZZ)$ will be multiplication by $j$. We obtain the same integral cohomology of $P$ as above in all degrees except $3$ and $4$, and to compute these we use the Gysin sequence as follows:
{ \small \[ 0 \to H^3(M,\ZZ) \xrightarrow{\pi^*} H^3(P,\ZZ) \xrightarrow{\pi_*} H^0(M,\ZZ) \xrightarrow{\cup e(P)} H^4(M,\ZZ) \xrightarrow{\pi^*} H^4(P,\ZZ) \xrightarrow{\pi_*} H^1(M,\ZZ) \to 0. \]}
Since the cup product here has trivial kernel, we conclude that $H^3(P,\ZZ) \cong H^3(M,\ZZ)$. Similarly, the cokernel is $\ZZ_j$ and hence $H^4(P,\ZZ) \cong H^1(M,\ZZ) \oplus \ZZ_j$ assuming that the sequence is split at $\pi_*: H^4(P,\ZZ) \to H^1(M,\ZZ)$.

In the case that $e(P) = 0$, we see that $P$ has torsion-free cohomology and thus the 5-twists given by $H^5(P,\ZZ) \cong H^2(M,\ZZ)$ can be considered. If $e(P) = j \neq 0$, however, then there is torsion in the cohomology of $P$ and so it must be determined whether all of the elements of $H^5(P,\ZZ)$ correspond to twists. The $E_2$-term of the Atiyah--Hirzebruch spectral sequence to compute $E^1_{\cO_{\infty}}(P)$ is shown below.
{\footnotesize \begin{center}
\begin{tikzpicture}
  \matrix (m) [matrix of math nodes,
    nodes in empty cells,nodes={minimum width=2.3ex,
    minimum height=5ex,outer sep=-5pt},
    column sep=1ex,row sep=1ex]{
    				& &  0  &  1  &  2 & 3 & 4 & 5 & 6 & 7 &\\
		0    	& &	\ZZ_2  & H^1(P,\ZZ_2) & H^2(P,\ZZ_2) & H^3(P,\ZZ_2) & H^4(P,\ZZ_2)  & H^5(P,\ZZ_2) & H^6(P,\ZZ_2) & \ZZ_2 & \\
		-1     	& &	0 & 0 & 0 & 0 & 0 & 0 & 0 & 0 & \\
		-2    	& &	\ZZ  & H^1(M,\ZZ) & H^2(M,\ZZ) & H^3(M,\ZZ) & H^1(M,\ZZ) \oplus \ZZ_j  & H^2(M,\ZZ) & H^3(M,\ZZ) & \ZZ & \\
		-3    	& &	0 & 0 & 0 & 0 & 0 & 0 & 0 & 0 & \\
		-4    	& &	\ZZ  & H^1(M,\ZZ) & H^2(M,\ZZ) & H^3(M,\ZZ) & H^1(M,\ZZ) \oplus \ZZ_j  & H^2(M,\ZZ) & H^3(M,\ZZ) & \ZZ & \\};
\draw[thick] (m-1-1.south) -- (m-1-11.south) ;
\draw[thick] (m-1-2.north) -- (m-6-2.south) ;
\end{tikzpicture}
\end{center}}

Although there is torsion in $H^4(P,\ZZ)$ which will affect the computation of $E^*_{\cO_{\infty}}(P)$, this will not have an effect on $E^1_{\cO_{\infty}}(P)$ much like was the case for $\RR P^{2n+1}$. Since the differentials are torsion operators then $d_3: H^2(M,\ZZ) \to H^2(M,\ZZ)$ will be zero, and as $H^1(P,\ZZ_2)$ makes up a subset of the twists then the differential $d_3: H^1(P,\ZZ_2) \to H^4(P,\ZZ)$ will also necessarily be zero. Thus we may conclude that the twists of $K$-theory are given by odd-degree cohomology in this case, and so it is sensible to use 5-twists and 7-twists for $P$.

To compute the higher twisted $K$-theory groups themselves, the twisted Atiyah--Hirzebruch spectral sequence may be used. The $E_2$-term for $e(P) = 0$ is as follows.
{\footnotesize \begin{center}
\begin{tikzpicture}
  \matrix (m) [matrix of math nodes,
    nodes in empty cells,nodes={minimum width=2.3ex,
    minimum height=5ex,outer sep=-5pt},
    column sep=1ex,row sep=1ex]{
		2    	& &	\ZZ  & H^1(M,\ZZ) & H^2(M,\ZZ) & H^3(M,\ZZ) \oplus \ZZ & H^1(M,\ZZ) \oplus \ZZ  & H^2(M,\ZZ) & H^3(M,\ZZ) & \ZZ & \\
		1     	& &   0 &  0  & 0 & 0 & 0 & 0 & 0 & 0 & \\
		0    	& &	\ZZ  & H^1(M,\ZZ) & H^2(M,\ZZ) & H^3(M,\ZZ) \oplus \ZZ & H^1(M,\ZZ) \oplus \ZZ  & H^2(M,\ZZ) & H^3(M,\ZZ) & \ZZ & \\
     \strut & &  0  &  1  &  2 & 3 & 4 & 5 & 6 & 7 & \strut \\};
\draw[thick] (m-1-2.north) -- (m-4-2.south) ;
\draw[thick] (m-4-1.north) -- (m-4-11.north) ;
\end{tikzpicture}
\end{center}}

Similarly, the $E_2$-term for $e(P) = j \neq 0$ is below.

{\footnotesize \begin{center}
\begin{tikzpicture}
  \matrix (m) [matrix of math nodes,
    nodes in empty cells,nodes={minimum width=2.3ex,
    minimum height=5ex,outer sep=-5pt},
    column sep=1ex,row sep=1ex]{
		2    	& &	\ZZ  & H^1(M,\ZZ) & H^2(M,\ZZ) & H^3(M,\ZZ) & H^1(M,\ZZ) \oplus \ZZ_j  & H^2(M,\ZZ) & H^3(M,\ZZ) & \ZZ & \\
		1     	& &   0 &  0  & 0 & 0 & 0 & 0 & 0 & 0 & \\
		0    	& &	\ZZ  & H^1(M,\ZZ) & H^2(M,\ZZ) & H^3(M,\ZZ) & H^1(M,\ZZ) \oplus \ZZ_j  & H^2(M,\ZZ) & H^3(M,\ZZ) & \ZZ & \\
   \strut  & &  0  &  1  &  2 & 3 & 4 & 5 & 6 & 7 & \strut \\};
\draw[thick] (m-1-2.north) -- (m-4-2.south) ;
\draw[thick] (m-4-1.north) -- (m-4-11.north) ;
\end{tikzpicture}
\end{center}}

In both cases, we may argue that the $d_3$ differential $Sq^3$ is zero as follows. Firstly, $Sq^3$ of course annihilates $H^k(P,\ZZ)$ for $0 \leq k \leq 2$ by Theorem 4L.12 of \cite{HatcherAT} as well as $5 \leq k \leq 7$ since the image of the map is a trivial cohomology group, which leaves only $k=3$ and $k=4$ to be considered. But the image of $Sq^3$ is a $\ZZ_2$-torsion element by definition, and there is no torsion in $H^6(P,\ZZ)$ or $H^7(P,\ZZ)$ and hence this differential must be zero. Similarly, the torsion part of $d_5$ will annihilate all cohomology classes, leaving only the cup product with the twisting class $-\delta \in H^5(P,\ZZ)$ to be considered.

To determine how cup product with the twisting class affects the cohomology, the isomorphisms between the cohomology groups of $M$ and $P$ need to be viewed more explicitly. Observe that $\pi_*: H^k(P,\ZZ) \to H^{k-3}(M,\ZZ)$ is an isomorphism for $k=5, 6, 7$ and similarly $\pi^*: H^k(M,\ZZ) \to H^k(P,\ZZ)$ is an isomorphism for $k = 0, 1, 2$. Fixing a twist $\delta \in H^5(P,\ZZ)$, there is an $\eta \in H^2(M,\ZZ)$ such that $(\pi_*)^{-1}(\eta) = \delta$. Then taking $\alpha_k \in H^k(M,\ZZ)$ for $k=0, 1, 2$ so that $\pi^*(\alpha_k) \in H^k(P,\ZZ)$, we see that $\pi_*(\delta \cup \pi^*(\alpha_k)) \in H^k(M,\ZZ)$ will be the image of the cup product in the cohomology of $M$. But by a property of pushforwards, this is equal to $\pi_*(\delta) \cup \alpha_k = \eta \cup \alpha_k$. Hence the cup product with $\delta$ on the cohomology of $P$ is equivalent to cup product with  $\eta$ on the cohomology of $M$. Since this cup product is injective on $H^0(P,\ZZ)$, all higher differentials will be zero and thus the $E_{\infty}$-terms of the spectral sequences can be determined. We will not explicitly write the $E_{\infty}$-terms due to their size, but they can easily be determined from above.

Finally we may conclude that, up to extension problems, the 5-twisted $K$-theory of $P$ when $e(P) = 0$ is
\begin{align*}
K^0(P,\delta) &= \ker{ \cup \eta|_{H^2(M,\ZZ)}} \oplus H^1(M,\ZZ) \oplus \ZZ \oplus \coker{ \cup \eta|_{H^1(M,\ZZ)}}; \\
K^1(P,\delta) &= \ker{ \cup \eta|_{H^1(M,\ZZ)}} \oplus H^3(M,\ZZ) \oplus \ZZ \oplus H^2(M,\ZZ) / |\eta| \ZZ \oplus \coker{ \cup \eta|_{H^2(M,\ZZ)}};
\end{align*}
and when $e(P) = j \neq 0$ is
\begin{align*}
K^0(P,\delta) &= \ker{ \cup \eta|_{H^2(M,\ZZ)}} \oplus H^1(M,\ZZ) \oplus \ZZ_j \oplus \coker{ \cup \eta|_{H^1(M,\ZZ)}}; \\
K^1(P,\delta) &= \ker{ \cup \eta|_{H^1(M,\ZZ)}} \oplus H^3(M,\ZZ) \oplus H^2(M,\ZZ) / |\eta| \ZZ \oplus \coker{ \cup \eta|_{H^2(M,\ZZ)}}.
\end{align*}

To be more explicit about the extension problems involved, we illustrate how these higher twisted $K$-theory groups may differ if the extension problems are non-trivial.

Considering the case of $e(P) = j \neq 0$ above, determining the even-degree higher twisted $K$-theory group of $P$ would require solving the following:
\begin{align*}
&0 \to H^1(M,\ZZ) \oplus \ZZ_j \to A \to \coker{ \cup \eta|_{H^1(M,\ZZ)}} \to 0; \\
&0 \to \ker{ \cup \eta|_{H^2(M,\ZZ)}} \to K^0(P,\delta) \to A \to 0.
\end{align*}

These extension problems should be kept in mind when viewing the direct sums in the expressions above.

Of course these higher twisted $K$-theory groups are heavily dependent on the ring structure of the cohomology of $M$ as is evident by the presence of the kernel and cokernel of the cup product on $M$ in the expressions given above, but given a specific 4-manifold $M$ with torsion-free cohomology, e.g.\ a simply connected 4-manifold satisfying the previous assumptions, this determines the 5-twisted $K$-theory of $P$ up to extension problems.

\begin{example}
We apply the formulas given above to the manifold $M = S^2 \times S^2$. This space has trivial cohomology in degrees 1 and 3, and $H^2(M,\ZZ) \cong \ZZ \oplus \ZZ$. To specify the ring structure on cohomology, the cup product of the two generators of $H^2(M,\ZZ)$ is the generator of $H^4(M,\ZZ)$.

Consider a twist $\delta \in H^2(M,\ZZ)$ given by $\delta = (L \alpha_0, N \beta_0)$ with $\alpha_0, \beta_0$ the generators and $L, N \neq 0$. Then the cup product map $H^2(M,\ZZ) \to H^4(M,\ZZ)$ is $(a,b) \mapsto Lb + Na$, which has kernel $\{(Li/k, Ni/k) : i \in \ZZ\} \cong \ZZ$ and cokernel $\ZZ_k$ where $k$ denotes the greatest common divisor of $L$ and $N$. From this, we see that when $e(P) = 0$ we have
\begin{align*}
K^0(P,\delta) &= \ZZ \oplus \ZZ; \\
K^1(P,\delta) &= \ZZ \oplus \ZZ_L \oplus \ZZ_N \oplus \ZZ_k;
\end{align*}
and when $e(P) = j \neq 0$ the groups are
\begin{align*}
K^0(P,\delta) &= \ZZ \oplus \ZZ_j; \\
K^1(P,\delta) &= \ZZ_L \oplus \ZZ_N \oplus \ZZ_k.
\end{align*}
\end{example}

Since there is no specified 5-class in the setting of spherical T-duality, the physical interpretation of these 5-twisted $K$-theory groups is not as clear as in the case of 7-twists. Nevertheless, the work of Bouwknegt, Evslin and Mathai provides a link between spherical T-duality and supergravity theories in Type IIB string theory, and so further research into this area may shed light on the physical meaning of the computations that we have performed. Tying these computations together with physics may then provide further insight into certain aspects of string theory.

To conclude, we remark that many of the computations presented in this section can be used as the starting point for further investigation, particularly in obtaining more general results for the higher twisted $K$-theory of Lie groups and in exploring the link between higher twisted $K$-theory and string theory.


\bibliographystyle{plain}

\end{document}